\newtheorem{theorem}{Theorem}[section]
\newtheorem{lemma}[theorem]{Lemma}
\newtheorem{corollary}[theorem]{Corollary}
\newtheorem{conjecture}[theorem]{Conjecture}
\theoremstyle{definition}
\newtheorem{definition}[theorem]{Definition}
\theoremstyle{remark}
\newtheorem{remark}[theorem]{Remark}
\newtheorem{remarks}[theorem]{Remarks}
\numberwithin{equation}{section}
\let\ft=\footnotesize
\def\lien{\mathrel{\mkern-4mu}}
\def\too{\relbar\lien\rightarrow}
\def\tooo{\relbar\lien\relbar\lien\too}
\def\Q{\mathbb{Q}}
\def\Z{\mathbb{Z}}
\def\Frac#1#2{\hbox{\footnotesize $\displaystyle \frac{#1}{#2}$}}
\def\plus{\displaystyle\mathop{\raise 1.0pt \hbox{$\bigoplus $}}\limits}
\def\prd{\displaystyle\mathop{\raise 2.0pt \hbox{$\prod$}}\limits}
\def\sm{\displaystyle\mathop{\raise 2.0pt \hbox{$\sum$}}\limits}
\let\ds=\displaystyle
\let\wt=\widetilde
\let\ov=\overline
\def\Cl{{\mathcal C}\hskip-2pt{\ell}}
\def\cl{c\hskip-1pt{\ell}}
\def\order{\raise1.5pt \hbox{${\scriptscriptstyle \#}$}}
\begin{document}
\markboth{Georges Gras}
{Annihilation of ${\rm tor}_{\Z_p}({\mathcal G}_{K,S}^{\rm ab})$}

\title[Annihilation of ${\rm \bf tor}_{\Z_p}^{}({\mathcal G}_{K,S}^{\rm ab})$]
{Annihilation of ${\rm \bf tor}_{\Z_p}^{}({\mathcal G}_{K,S}^{\rm ab})$ \\ 
for real abelian extensions $K/\Q$}

\author{Georges Gras}
\address{Villa la Gardette, Chemin Ch\^ateau Gagni\`ere 
 F--38520 Le Bourg d'Oisans, France,
{\rm \url{https://www.researchgate.net/profile/Georges_Gras}}}
\email{g.mn.gras@wanadoo.fr}

\keywords{class field theory; abelian $p$-ramification; annihilation 
of $p$-torsion modules; $p$-adic $L$-functions; Stickelberger's elements;
cyclotomic units}
\subjclass{11R37, 11R29, 11R42, 11S15}

\date{July 5, 2018; September 18, 2018}

\begin{abstract}
Let $K$ be a real abelian extension of $\Q$.
Let $p$ be a prime number, $S$ the set of $p$-places of $K$ and 
${\mathcal G}_{K,S}$ the Galois group of the maximal 
$S \cup \{\infty\}$-ramified pro-$p$-extension of $K$ (i.e., 
unramified outside $p$ and $\infty$). 
We revisit the problem of annihilation of 
the $p$-torsion group ${\mathcal T}_K := 
{\rm tor}_{\Z_p} \big({\mathcal G}_{K,S}^{\rm ab}\big)$
initiated by us and Oriat then systematized in our paper on the construction 
of $p$-adic $L$-functions in which we obtained a 
canonical ideal annihilator of ${\mathcal T}_K$ in full generality
(1978--1981).
Afterwards (1992--2014) some annihilators, using cyclotomic units,
were proposed by Solomon, Belliard--Nguyen Quang Do, 
Nguyen Quang Do--Nicolas, All, Belliard--Martin.
In this text, we improve our original papers and show that, in general, 
the Solomon elements are not optimal and/or partly degenerated.
We obtain, whatever $K$ and $p$, an universal non-degenerated 
annihilator in terms of $p$-adic logarithms of cyclotomic numbers related to
$L_p$-functions at $s=1$ of {\it primitive characters of $K$} (Theorem \ref{thmp}). 
Some computations are given with PARI programs; the case $p=2$ is 
analyzed and illustrated in degrees $2$, $3$, $4$ to test a conjecture.
\end{abstract}

\maketitle

\tableofcontents

\section{Introduction}
Let $K/\Q$ be a real abelian extension of Galois group $G_K$.
Let $p$ be a prime number, $S$ the set of $p$-places of $K$, and 
${\mathcal G}_{K,S}$ the Galois group of the maximal $S$-ramified 
in the ordinary sense (i.e., unramified outside $p$ and $\infty$,
whence totally real if $p=2$) pro-$p$-extension of $K$. 
 
\smallskip
We revisit the classical problem of annihilation of the so-called $\Z_p[G_K]$-module 
${\mathcal T}_K := {\rm tor}_{\Z_p} \big({\mathcal G}_{K,S}^{\rm ab}\big)$, 
as dual of ${\rm H}^2({\mathcal G}_{K,S}, \Z_p(0))$. 
This was initiated by us \cite{Gr5} (1979) and improved by Oriat \cite{O} (1981). 
Then in our paper  \cite{Gr6} (1978/79) on the construction of $p$-adic $L$-functions 
(via an ``arithmetic Mellin transform'' from the ``Spiegel involution'' of 
suitable Stickelberger elements) we obtained incidentally a canonical ideal 
annihilator ${\mathcal A}_K$ of ${\mathcal T}_K$ in full generality, but
our purpose, contrary to the present work, was the semi-simple case
with $p$-adic characters and the annihilation of the isotopic components; 
this aspect has then been outdated by the
``principal theorems'' of Ribet--Mazur--Wiles--Kolyvagin--Greither
(refer for instance to the bibliography of \cite{Gre}), and many other contributions.

\smallskip
Afterwards some annihilators, using cyclotomic units,
were proposed by Solomon  \cite{Sol1} (1992), Belliard--Nguyen Quang 
Do  \cite{B-N} (2005), Nguyen Quang Do--Nicolas  \cite{N-N} (2011), 
All  \cite{All1} (2013), Belliard--Martin  \cite{B-M} (2014), 
using techniques of Sinnott, Rubin, Thaine, Coleman, from Iwasawa's theory.

\smallskip
In this text, we translate into english some parts of the above 
1978--1981's papers, written in french 
with tedious classical techniques, then we show that, in general, the Solomon 
elements $\Psi_K$ are often degenerated regarding the annihilator 
${\mathcal A}_K$, even for cyclic fields, and explain the origin of this 
gap due to trivialization of some Euler factors.

\smallskip
We obtain, whatever $K$ and $p$ (Theorem \ref{thmp}), 
an universal non-degenerated annihilator ${\mathcal A}_K$, in terms 
of $p$-adic logarithms of cyclotomic numbers, perhaps the best possible 
regarding these classical methods, but probably too general to cover
all the possible Galois structures of ${\mathcal T}_K$, 
which raises the question of the existence of a better theorem than 
Stickelberger's one. 

\smallskip
Indeed, if the semi-simple case is now 
completely solved, the non-semi-simple case is far to be known.
Numerical experiments show in this case
that the results are far to give the precise Galois structure
of ${\mathcal T}_K$ (e.g., in direction of its Fitting ideal), moreover, it seems 
to us that many (all ?) papers are based on the classical reasoning with 
Kummer's theory and Leopoldt's Spiegel involution applied to
Stickelberger's elements, even translated into
Iwasawa's theory, without practical analysis of the results 
(e.g., with extensive numerical illustrations). So, there is some
difficulties to compare these various contributions.  

\smallskip
Thus, we perform some computations given with PARI programs \cite{P}
to analyse the quality of such annihilators, which is in general not 
addressed by papers dealing with Iwasawa's theory. 
We consider in a large part the case $p=2$, illustrated
in degrees $2$, $3$, $4$ to test the Conjecture \ref{conj}.

\section{Notations and reminders on $p$-ramification theory}\label{section1} 
Let $K$ be a real abelian number field of degree $d$, of Galois group $G_K$,
and let $p \geq 2$ be a prime number; we denote by $S$ the set 
of prime ideals of $K$ dividing $p$. 
Let ${\mathcal G}_{K,S}$ be the Galois group of the maximal 
$S \cup \{\infty\}$-ramified pro-$p$-extension of $K$ and 
let $H_K^{\rm pr}$ be the maximal abelian $S \cup \{\infty\}$-ramified 
pro-$p$-extension of $K$. 
To simplify, we put ${\mathcal G}_{K,S}^{\rm ab} =: 
{\mathcal G}_{K}$ and (e.g., \cite[Chapter III, \S\,(c)]{Gr1}):
$${\mathcal T}_K :=  {\rm  tor}^{}_{\Z_p}({\mathcal G}_K)
= {\rm Gal}(H_K^{\rm pr}/ K_\infty) $$

where $K_\infty = K\Q_\infty$ is the cyclotomic $\Z_p$-extension of $K$; so:
$${\mathcal G}_K \simeq \Z_p \plus {\mathcal T}_K$$ 
since, in the abelian case, Leopoldt's conjecture is true. 

\smallskip
We denote by $F$ an extension of $K$ such that $H_K^{\rm pr}$ 
is the direct compositum of $K_\infty$ and $F$  over $K$, then by
$\Cl_K^\infty$ the subgroup of the $p$-class group $\Cl_K$ corresponding, 
by class field theory, to ${\rm Gal}(H_K/ K_\infty \cap H_K)$, 
where $H_K$ is the $p$-Hilbert class field. We have (where
$\sim$ means ``equality up to a $p$-adic unit''):
\begin{equation}\label{clinfty}
\order \Cl_K^\infty \sim \Frac{\order \Cl_K}
{[K_\infty \cap H_K : K]} \sim \order \Cl_K \cdot 
\Frac{[K \cap \Q_\infty : \Q]}{e_p}\cdot \Frac{2}
{\order (\langle -1 \rangle \cap  {\rm N}_{K/\Q}(U_K))}, 
\end{equation}

where $e_p$ is the ramification index of $p$ in $K/\Q$ 
\cite[Theorem III.2.6.4]{Gr1}, and $U_K$ is defined as follows:

\smallskip
For each ${\mathfrak p} \mid p$, let $K_{\mathfrak p}$ be the 
${\mathfrak p}$-completion of $K$ and $\ov {\mathfrak p}$ the 
corresponding prime ideal of the ring of integers of $K_{\mathfrak p}$; then let:

\smallskip
\centerline{$U_K := \Big \{u \in \plus_{{\mathfrak p} \mid p}K_{\mathfrak p}^\times, \ \,
u = 1+x, \  x \in \plus_{{\mathfrak p} \mid p} \ov {\mathfrak p} \Big\}\, \ \ \& \ \ \,
W_K := {\rm tor}_{\Z_p}^{}(U_K)$,} 

\smallskip\noindent
the $\Z_p$-module (of $\Z_p$-rank $d = [K : \Q]$) of principal local units at $p$ 
and its  torsion subgroup, respectively; by class field theory this gives 
in the diagram: 
$${\rm Gal}(H_K^{\rm pr}/H_K) \simeq U_K/\ov E_K \ \ \& \ \ 
{\rm Gal}(H_K^{\rm pr}/K_\infty H_K) \simeq {\rm tor}_{\Z_p}^{} \big(U_K \big / \ov E_K \big), $$ 
where $\ov E_K$ is the closure of the group $E_K$ of $p$-principal global units of $K$ 
(i.e., units $\varepsilon \equiv 1 \! \pmod{ \prod_{{\mathfrak p} \mid p} {\mathfrak p}}$):
\unitlength=1.0cm 
$$\vbox{\hbox{\hspace{-1.2cm} 
 \begin{picture}(11.5,5.6)
\put(6.8,4.50){\line(1,0){4.0}}
\put(3.85,4.50){\line(1,0){1.4}}
\put(4.4,2.50){\line(1,0){1.05}}
\put(3.8,0.5){\line(1,0){7.0}}
\put(10.9,0.4){$F$}

\bezier{350}(3.8,4.8)(7.6,5.8)(11.0,4.8)
\put(6.2,5.45){\footnotesize${\mathcal T}_K=
{\rm tor}_{\Z_p}({\mathcal G}_K)$}

\put(3.50,2.9){\line(0,1){1.25}}
\put(3.50,0.9){\line(0,1){1.25}}
\put(5.75,2.9){\line(0,1){1.25}}

\put(11.1,0.8){\line(0,1){3.45}}%

\bezier{300}(3.9,0.65)(5.1,0.8)(5.6,2.3)
\put(5.25,1.3){\footnotesize$\simeq \! \Cl_K$}
\put(4.2,4.15){\footnotesize$\order \Cl_K^\infty$}

\bezier{300}(6.3,2.5)(8.5,2.6)(10.8,4.3)
\put(7.5,2.5){\footnotesize$\simeq \! U_K/\ov E_K$}

\bezier{500}(3.9,0.6)(8.5,0.8)(10.95,4.3)
\put(8.7,1.8){\footnotesize${\mathcal G}_K$}

\put(10.85,4.4){$H_K^{\rm pr}$}
\put(5.4,4.4){$K_\infty\! H_K$}
\put(7.6,4.14){\footnotesize$\order{\mathcal R}_K
 \cdot \order {\mathcal W}_K$}
\put(3.3,4.4){$K_\infty$}
\put(5.55,2.4){$H_K$}
\put(2.8,2.4){$K_\infty \!\cap \! H_K$}
\put(3.3,0.40){$K$}

\put(1.7,0.5){\line(1,0){1.5}}
\put(1.3,0.4){$\Q$}
\put(2.3,0.65){\footnotesize $G_K$}
\end{picture}   }} $$
\unitlength=1.0cm

For any field $k$, let $\mu^{}_k$ be the group of roots of unity of $k$
of $p$-power order. Then $W_K = 
\plus_{{\mathfrak p} \mid p} \mu^{}_{K_{\mathfrak p}}$.
We have the following exact sequence defining 
${\mathcal W}_K$ and ${\mathcal R}_K$ via the $p$-adic logarithm ${\rm log}$
(\cite[Lemma III.4.2.4]{Gr1} or \cite[Lemma 3.1 \& \S\,5]{Gr2}):
\begin{equation}\label{exseq}
\begin{aligned}
1\to  {\mathcal W}_K  :=  W_K /\mu^{}_K &  \tooo 
 {\rm tor}_{\Z_p}^{} \big(U_K \big / \ov E_K \big) \\
 &\mathop {\tooo}^{{\rm log}}  {\rm tor}_{\Z_p}^{}\big({\rm log}\big 
(U_K \big) \big / {\rm log} (\ov E_K) \big)=: {\mathcal R}_K \to 0. 
\end{aligned}
\end{equation}

The group ${\mathcal R}_K$ is called the {\it normalized $p$-adic 
regulator of $K$} and makes sense for any number field (see the above
references in \cite{Gr2} for more details and the main properties of 
these invariants).

\smallskip
It is clear that the annihilation of ${\mathcal T}_K$ mainely
concerns the group ${\mathcal R}_K$ since the $p$-class group
is in general trivial (and so for $p$ large enough) and because 
the regulator may be non-trivial with large valuations and
unpredictible $p$ (see \cite{Gr4} for some conjectures and 
\cite{Gr3} giving programs of fast computation of 
the {\it group structure of ${\mathcal T}_K$
for any number field} given by means of polynomials).

\begin{definition}
A field $K$ is said to be $p$-rational if 
the Leopoldt conjecture is satisfied for $p$ in $K$ and if
the torsion group ${\mathcal T}_K$ is trivial (\cite[Section III, \S\,2]{Gr7}, 
then \cite[Theorem IV.3.5]{Gr1}, \cite{Gr3}, and bibliographies
for the history and properties of $p$-rationality).
\end{definition}

This has deep consequences in Galois theory over $K$ since 
${\mathcal T}_K$ is the dual of
${\rm H}^2({\mathcal G}_{K,S}, \Z_p(0))$ \cite{Ng1}.

\section{Kummer theory  and Spiegel involution} 
\subsection{Kummer theory} 
We denote by $\Q_n$, $n \geq 0$, the $n$th stage in $\Q_\infty$ 
so that $[\Q_n : \Q] = p^n$.
Let $n_0\geq 0$ be defined by $K \cap \Q_\infty=:\Q_{n_0}$.

\smallskip
Let $n \geq n_0$.
We denote by $K_n$ the compositum $K \Q_n$ and by
$F_n$ the compositum $F K_n = F \Q_n$ (in other words, 
$K=K_{n_0}$, $F=F_{n_0}$).
Then we have the {\it group isomorphism}
${\rm Gal}(F_n/K_n) \simeq {\mathcal T}_K$ for all $n \geq n_0$.

\smallskip
Put $q=p$ (resp. $4$) if $p \ne 2$ (resp. $p=2$).
Let $L=K(\mu_q)$ and $M=F(\mu_q)$;
then put $L_n:=L K_n$ for all $n \geq n_0$.

\smallskip
Let $M_n:= F_n (\mu_q)$ (whence $L=L_{n_0}$, $M=M_{n_0}$).
For $p\ne 2$, the degrees $[L_n : K_n] = [M_n : F_n]$ are 
equal to a divisor $\delta$ of $p-1$ independent of $n \geq n_0$
($\delta$ is even since $K$ is real). For $p=2$, $\delta=2$. 
In any case, one has, for $n \geq n_0$:
$$L_n = K(\mu_{q p^{n}}^{}). $$
All this is summarized by the following diagram:

\unitlength=1.25cm
$$\vbox{\hbox{\hspace{-1.0cm} 
 \begin{picture}(9.5,6.15)
\put(4.3,5.5){\line(1,0){2.9}}
\put(1.7,5.5){\line(1,0){1.9}}
\put(4.3,3.0){\line(1,0){2.9}}
\put(1.7,3.0){\line(1,0){1.9}}
\put(4.3,1.5){\line(1,0){2.9}}
\put(1.7,1.5){\line(1,0){0.5}}
\put(2.75,1.5){\line(1,0){0.95}}
\put(3.7,2.6){\line(1,0){0.9}}
\put(2.7,2.55){\ft$\Q_{n_0}\!(\mu_q)$}
\put(2.3,1.4){$K'$}
\put(5.0,2.6){\line(1,0){3.1}}%
\put(5.0,4.1){\line(1,0){3.1}}%
\put(1.5,3.2){\line(0,1){2.05}}
\put(1.5,1.65){\line(0,1){1.15}}
\put(1.5,0.2){\line(0,1){1.1}}
\put(4.0,3.2){\line(0,1){2.05}}
\put(4.0,1.65){\line(0,1){1.15}}
\put(8.25,2.75){\line(0,1){1.3}}
\put(4.75,2.75){\line(0,1){1.3}}
\put(7.5,3.2){\line(0,1){2.05}}
\put(7.5,1.65){\line(0,1){1.15}}
\put(7.3,5.4){$H_K^{\rm pr}$}
\put(3.7,5.4){$K_\infty$}
\put(1.3,5.4){$\Q_\infty$}
\put(1.3,2.9){$\Q_n$}
\put(3.8,2.9){$K_n$}
\put(7.3,2.9){$F_n$}
\put(4.6,4.1){$L_n$}
\put(8.14,4.1){$M_n$}
\bezier{350}(4.1,3.15)(4.35,3.55)(4.6,3.95)
\bezier{350}(7.6,3.15)(7.85,3.55)(8.1,3.95)
\put(1.3,1.4){$\Q_{n_0}$}
\put(3.85,1.4){$K$}
\put(7.3,1.4){$F$}
\put(4.65,2.5){$L$}
\put(8.1,2.5){$M$}
\put(1.35,-0.05){$\Q$}
\put(3.5,0.3){$G_K$}
\put(4.2,1.1){$K= K_{n_0},\  F=F_{n_0}$}
\put(4.2,0.6){$K' := K \cap \Q_{n_0}(\mu_q) \ \&\ 
\Q_{n_0}(\mu_q) = \Q(\mu^{}_{q p^{n_0}})$}
\put(4.2,0.1){(for $p=2$, $K' =  \Q_{n_0} \ \& \ 
\delta = 2$)}
\put(8.0,1.9){$\delta \vert \varphi(q)$}
\bezier{350}(1.75,0.05)(3.5,0.0)(4.0,1.2)
\bezier{350}(4.1,1.65)(4.35,2.05)(4.6,2.45)
\bezier{350}(7.6,1.65)(7.85,2.05)(8.1,2.45)
\bezier{350}(2.6,1.65)(2.85,2.05)(3.1,2.45)
\bezier{350}(4.2,5.7)(5.75,6.3)(7.3,5.7)
\put(5.65,6.15){${\mathcal T}_K$}
\bezier{500}(1.6,0.2)(2.5,4.0)(4.5,4.1)
\put(2.6,3.5){$G_n$}
\bezier{500}(4.2,1.55)(6.2,2.4)(7.25,5.36)
\put(6.5,3.5){${\mathcal G}_K$}
\end{picture}   }} $$
\unitlength=1.0cm

\medskip
\begin{lemma}\label{conductor}
Let $f_K$ be the conductor of $K$.
Then the conductor $f_{L_n}$ of $L_n$ ($n \geq n_0$) is 
equal to {\rm l.c.m.}\,$(f_K, q p^n)$. Thus for $n$ large enough (explicit), 
$f_{L_n}=q p^n f'$, with $p\nmid f'$.
If $p \nmid f_K$, then $f_{L_n} = q p^n f_K$ for all $n \geq n_0+e$.
\end{lemma}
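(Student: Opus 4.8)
The plan is to reduce the whole statement to the classical description of conductors of abelian fields over $\Q$, followed by an elementary bookkeeping of $p$-adic valuations; there is no real obstacle, only some care to keep everything uniform in $p$ (the extra factor $q=4$ when $p=2$).

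First I recall the standard correspondence: to an abelian field $k/\Q$ one attaches its group $X_k$ of Dirichlet characters, $f_k$ is the {\rm l.c.m.}\ of the conductors $f_\chi$ for $\chi\in X_k$, and $X_{k_1k_2}$ is the subgroup generated by $X_{k_1}$ and $X_{k_2}$. Any $\chi\in X_{k_1k_2}$ is a product $\chi_1\chi_2$ with $\chi_i\in X_{k_i}$, so $f_\chi\mid{\rm l.c.m.}(f_{\chi_1},f_{\chi_2})\mid{\rm l.c.m.}(f_{k_1},f_{k_2})$; conversely $X_{k_1},X_{k_2}\subseteq X_{k_1k_2}$ forces $f_{k_1},f_{k_2}\mid f_{k_1k_2}$. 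Hence $f_{k_1k_2}={\rm l.c.m.}(f_{k_1},f_{k_2})$. Since $L_n=K(\mu_{qp^n})=K\cdot\Q(\mu_{qp^n})$, and since $qp^n=p^{n+1}$ for $p\neq 2$ while $qp^n=2^{n+2}$ for $p=2$ — in particular $qp^n\not\equiv 2\pmod 4$ — one has $f_{\Q(\mu_{qp^n})}=qp^n$, and the compositum formula applied to $k_1=K$, $k_2=\Q(\mu_{qp^n})$ gives $f_{L_n}={\rm l.c.m.}(f_K,qp^n)$. This is the first assertion.

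For the rest, write $f_K=p^{v}f'$ with $v:=v_p(f_K)$ and $p\nmid f'$. As $f'$ is prime to $qp^n$ while $qp^n$ is a pure power of $p$,
\[
f_{L_n}={\rm l.c.m.}(f_K,qp^n)=f'\cdot p^{\,\max(v,\,v_p(qp^n))},
\]
where $v_p(qp^n)=n+1$ for $p\neq 2$ and $v_p(qp^n)=n+2$ for $p=2$. Thus $f_{L_n}=qp^n f'$ as soon as $v_p(qp^n)\geq v$, i.e.\ for all $n$ beyond the explicit value $\max(n_0,\,v-1)$ (resp.\ $\max(n_0,\,v-2)$ when $p=2$), which proves the second assertion with $p\nmid f'$ (here $f'$ is just the prime-to-$p$ part of $f_K$). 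Finally, if $p\nmid f_K$ then $v=0$, hence $\gcd(f_K,qp^n)=1$ and $f_{L_n}=qp^n f_K$ already for every $n\geq n_0$; a fortiori this holds for all $n\geq n_0+e$, where $e=e_p$ is the ramification index of $p$ in $K$.

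I expect the only mildly technical point to be the uniform treatment of $v_p(qp^n)$ (the shift coming from $q=4$ when $p=2$); and if one wishes, beyond the easy case $p\nmid f_K$, to see that the bound $n_0+e$ dominates the optimal threshold in the ramified case too, it suffices to invoke the elementary estimate from local ramification theory: writing $e_p=p^a e'$ with $e'\mid p-1$, one has $v_p(f_K)\le a+1$ for $p$ odd and $v_p(f_K)\le a+2$ for $p=2$, while $a\le p^a\le e_p$.
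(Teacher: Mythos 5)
Your proof is correct and takes essentially the same route as the paper, whose entire proof is a citation of the classical compositum-conductor formula $f_{k_1k_2}=\mathrm{l.c.m.}(f_{k_1},f_{k_2})$ that you derive from the Dirichlet-character description of abelian fields, the remaining valuation bookkeeping being routine. One harmless slip: in this paper $e$ is defined by $p^e$ being the exponent of ${\mathcal T}_K$ (it is not the ramification index $e_p$), but since your argument already gives $f_{L_n}=qp^nf_K$ for all $n\ge n_0$ when $p\nmid f_K$, the assertion for $n\ge n_0+e$ follows for any $e\ge 0$ and your final remark on ramification indices is simply unnecessary.
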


\begin{proof} A classical formula (see, e.g., \cite[Proposition II.4.1.1]{Gr1}).
\end{proof}

\begin{lemma} Let $p^e$, $e \geq 0$, be the exponent of ${\mathcal T}_K$.
Then, for all $n \geq n_0+e$, the restriction 
${\mathcal T}_K \too {\rm Gal}(F_n/ K_n)$ is an isomorphism 
of $G_K$-modules and ${\mathcal T}_K \simeq {\rm Gal}(M_n/ L_n)$.
\end{lemma}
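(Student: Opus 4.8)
The plan is to pin down $F_n$ inside $H_K^{\rm pr}$ and then to show that, once $n\ge n_0+e$, the subfield $F_n$ is independent of the choice of the complementary extension $F$; since $\sigma(F)$ is again such a complement for every $\sigma\in{\rm Gal}(\overline{\Q}/\Q)$ and $\sigma(F_n)=\sigma(F)\,\Q_n$, this independence forces $\sigma(F_n)=F_n$, i.e. $F_n/\Q$ Galois, and the two module isomorphisms then drop out of standard Galois theory. I would begin by putting $\Gamma:={\rm Gal}(H_K^{\rm pr}/F)$, so that the direct-compositum hypothesis gives ${\mathcal G}_K=\Gamma\oplus{\mathcal T}_K$ with restriction identifying $\Gamma\xrightarrow{\ \sim\ }{\rm Gal}(K_\infty/K)\simeq\Z_p$; fix a topological generator $\gamma$ of $\Gamma$. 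From $\Q_{n_0}=K\cap\Q_\infty\subseteq K\subseteq F$ and $F\cap K_\infty=K$ one gets $F\cap\Q_\infty=\Q_{n_0}$, hence $[F_n:F]=[K_n:K]=p^{n-n_0}$, and a direct look at which elements $\gamma^a t$ ($a\in\Z_p$, $t\in{\mathcal T}_K$) of ${\mathcal G}_K$ fix $K_n$ gives
$${\rm Gal}(H_K^{\rm pr}/F_n)=\overline{\langle\gamma^{p^{n-n_0}}\rangle}$$
(and, consistently with the text, $[F_n:K_n]=\order{\mathcal T}_K$; recall ${\mathcal T}_K$ is finite).

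\smallskip
The heart of the matter, and the only step using $n\ge n_0+e$, is: if $F'$ is any extension of $K$ with $H_K^{\rm pr}=K_\infty F'$ a direct compositum over $K$, then $F'_n=F_n$ for all $n\ge n_0+e$. Indeed $\Gamma':={\rm Gal}(H_K^{\rm pr}/F')$ is another complement of ${\mathcal T}_K$ in ${\mathcal G}_K$; taking the unique generator $\gamma'$ of $\Gamma'$ with the same image as $\gamma$ in ${\mathcal G}_K/{\mathcal T}_K$ one has $\gamma'=\gamma t_0$ with $t_0\in{\mathcal T}_K$, so $(\gamma')^{p^{n-n_0}}=\gamma^{p^{n-n_0}}t_0^{p^{n-n_0}}=\gamma^{p^{n-n_0}}$ as soon as $n-n_0\ge e$, since ${\mathcal T}_K$ has exponent $p^e$; by the displayed formula this gives ${\rm Gal}(H_K^{\rm pr}/F'_n)={\rm Gal}(H_K^{\rm pr}/F_n)$, i.e. $F'_n=F_n$. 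Now for $\sigma\in{\rm Gal}(\overline{\Q}/\Q)$, restricting $\sigma$ to $H_K^{\rm pr}$ (Galois over $\Q$), conjugation by $\sigma$ stabilises ${\mathcal G}_K$ and its torsion subgroup ${\mathcal T}_K$, so $F'=\sigma(F)$ is again such a complement; since $\sigma(F_n)=\sigma(F)\,\Q_n=F'_n=F_n$, the field $F_n/\Q$ is Galois for $n\ge n_0+e$. I expect this comparison of complements through the exponent of ${\mathcal T}_K$ to be the only real obstacle; the rest is formal.

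\smallskip
Granting $F_n/\Q$ Galois: $F_n/K$ is abelian (a subextension of $H_K^{\rm pr}/K$) and $K_n/\Q$ is Galois, so ${\rm Gal}(F_n/K_n)$ is a $\Z_p[G_K]$-submodule of ${\rm Gal}(F_n/K)$, and the restriction ${\mathcal G}_K\to{\rm Gal}(F_n/K)$ is $G_K$-equivariant (restriction commutes with conjugation by any lift of $g\in G_K$ to ${\rm Gal}(H_K^{\rm pr}/\Q)$, as $F_n$ is stable under such lifts). This map sends ${\mathcal T}_K={\rm Gal}(H_K^{\rm pr}/K_\infty)$ into ${\rm Gal}(F_n/K_n)$ with kernel ${\mathcal T}_K\cap{\rm Gal}(H_K^{\rm pr}/F_n)={\rm Gal}(H_K^{\rm pr}/K_\infty F_n)=1$ (since $\Q_n\subseteq K_\infty$ and $K_\infty F=H_K^{\rm pr}$), so by the cardinality count $\order{\mathcal T}_K=[F_n:K_n]$ it is the asserted $G_K$-isomorphism ${\mathcal T}_K\xrightarrow{\ \sim\ }{\rm Gal}(F_n/K_n)$. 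For the final claim, $M_n=F_nL_n$ while $F_n\cap L_n=K_n$: the field $F_n\cap L_n$ is simultaneously a $p$-extension of $K_n$ (being inside $F_n$) and a subextension of the cyclic extension $L_n/K_n$ of degree $\delta\mid\varphi(q)$, and $\delta$ is prime to $p$ for $p$ odd, while for $p=2$ one uses that $F_n\subseteq H_K^{\rm pr}$ is totally real whereas $L_n=K_n(\mu_4)$ is not; hence $F_n\cap L_n=K_n$. Since $M_n/K$ is abelian and $L_n/\Q$ is Galois, restriction from $M_n$ to $F_n$ then yields a $G_K$-isomorphism ${\rm Gal}(M_n/L_n)\xrightarrow{\ \sim\ }{\rm Gal}(F_n/K_n)\simeq{\mathcal T}_K$, as claimed.
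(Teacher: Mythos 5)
Your proof is correct and follows essentially the same route as the paper: the key point in both is that for $n\ge n_0+e$ the group ${\rm Gal}(H_K^{\rm pr}/F_n)$ coincides with the canonical subgroup $({\mathcal G}_K)^{p^{n-n_0}}=\Gamma^{p^{n-n_0}}$ (because ${\mathcal T}_K$ has exponent $p^e$), which is normal in ${\rm Gal}(H_K^{\rm pr}/\Q)$, so $F_n$ is canonical, Galois over $\Q$, and the two $G_K$-isomorphisms follow formally. You merely make explicit some details the paper leaves implicit (the index computation, the equivariance of restriction, and $F_n\cap L_n=K_n$).
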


\begin{proof} The abelian group 
${\mathcal G}_K := {\rm Gal}(H_K^{\rm pr}/K)$ is normal in 
${\rm Gal}(H_K^{\rm pr}/\Q)$, then $({\mathcal G}_K)^{p^{n-n_0}}$ is normal; 
but $({\mathcal G}_K)^{p^{n-n_0}}$ fixes $F_n$ which is Galois over~$\Q$.
In other words, $G_K$, as well as ${\rm Gal}(K_n/\Q)$ or ${\rm Gal}(K_\infty/\Q)$, 
operate by conjugation in the same way since ${\mathcal G}_K$
is abelian; if $F$ is clearly non-unique, then $F_{n_0+e}$ is canonical, 
being the fixed fiel of $\big({\mathcal G}_K\big)^{p^e}$.
Then ${\rm Gal}(M_n/ L_n) \simeq {\rm Gal}(F_n/ K_n)$ is trivialy
an somorphism of $G_K$-modules.
\end{proof}

The use of the extension $F$ is not strictly necessary but clarifies the 
reasoning which needs to work at any level $n \geq n_0+e$ to preserve
Galois structures.

\smallskip
The extension $M_n/L_n$ (of exponent $p^e$) is a Kummer extension 
for the ``exponent'' $qp^n$ since $L_n$ contains the group $\mu_{qp^n}^{}$
and since $n \geq n_0+e$.

\smallskip
Let $G_n := {\rm Gal}(L_n/\Q)$ and let, for $n \geq n_0+e$,
$${\rm Rad}_n := \{w \in L_n^\times,\  \sqrt [{}^{qp^n} \!\!\!]{w} \in M_n\}$$

be the radical of $M_n/L_n$. Then we have the group isomorphism:
$${\rm Rad}_n/L_n^{\times qp^n} \simeq {\rm Gal}(M_n/L_n). $$
In some sense, the group ${\rm Rad}_n/L_n^{\times qp^n}$ 
does not depend on $n \geq n_0+e$ since the
canonical isomorphism
${\rm Gal}(M_{n+h}/L_{n+h}) \simeq {\rm Gal}(M_n/L_n)$
gives $L_{n+h}( \sqrt [{}^{qp^n} \!\!\!]{{\rm Rad}_n}) =M_{n+h}$;
the map ${\rm Rad}_n/L_n^{\times qp^n}  \ds\mathop{\too}^{p^h}
 {\rm Rad}_{n+h}/L_{n+h}^{\times q p^{n+h}}$
is an isomorphism for any $h\geq 0$. In other words, 
as soon as $n\geq n_0+e$, we have:
$${\rm Rad}_n \subseteq L_n^{\times qp^{n-e}}\ \ \& \ \ 
{\rm Rad}_{n+h} = {\rm Rad}_n^{p^h} \cdot L_{n+h}^{\times qp^{n+h}}. $$

\subsection{Spiegel involution}
The structures of $(\Z/q p^n \Z) [G_n]$-modules of 
the Galois group ${\rm Gal}(M_n/L_n)$ and 
${\rm Rad}_n/L_n^{\times qp^n}$ are related via the 
``Spiegel involution'' defined as follows:
let $\omega_n : G_n \too \Z/q p^n\Z$ be the {\it character of 
Teichm\"uller of level $n$} defined by:
$$\zeta^s = \zeta^{\omega_n(s)}, \ \hbox{for all $s \in G_n$ 
and all $\zeta \in \mu_{q p^n}^{}$.}$$

The Spiegel involution is the involution of $(\Z/q p^n \Z) [G_n]$ defined by:
$$\hbox{$x :=\sm_{s \in G_n} a_s \cdot s\ $ $\mapsto$
$\ x^* := \sm_{s \in G_n} a_s\cdot \omega_n(s)\cdot s^{-1}$.} $$

Thus, if $s$ is the Artin symbol $\Big(\Frac{L_n}{a} \Big)$, then 
$\Big(\Frac{L_n}{a} \Big)^{\! *} \equiv a\cdot \Big(\Frac{L_n}{a} \Big)^{-1}\!\!
\pmod {qp^n}$.
For the convenience of the reader we prove once again the very classical:

\begin{lemma}\label{duality}
Let $n \geq n_0 +e$ where $p^{n_0} = [K \cap \Q_\infty : \Q]$
and $p^e$ is the exponent of ${\mathcal T}_K$.
The annihilators $A_n$ of ${\rm Gal}(M_n/L_n)$ (thus of ${\mathcal T}_K$)
in $(\Z/q p^n \Z) [G_n]$ are the images of the annihilators $S_n$ of 
${\rm Rad}_n/L_n^{\times qp^n}$ by the Spiegel involution and inversely.
An annihilator $A_n$ of ${\mathcal T}_K$ only depends on its projection 
$A_{K,n}$ in $(\Z/q p^n \Z) [G_K]$.
\end{lemma}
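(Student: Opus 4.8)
The plan is to set up the Kummer pairing between $\mathrm{Gal}(M_n/L_n)$ and $\mathrm{Rad}_n/L_n^{\times qp^n}$ and track how the two natural $(\Z/qp^n\Z)[G_n]$-module structures interact under it. First I would recall the perfect pairing
$$\mathrm{Gal}(M_n/L_n) \times {\rm Rad}_n/L_n^{\times qp^n} \too \mu_{qp^n}^{},\qquad
(\sigma, \bar w) \mapsto \sigma(\sqrt[{}^{qp^n}\!\!\!]{w})\big/\sqrt[{}^{qp^n}\!\!\!]{w},$$
which is well-defined and nondegenerate precisely because $n \geq n_0+e$ guarantees $\mu_{qp^n} \subseteq L_n$ and the exponent of $\mathrm{Gal}(M_n/L_n)$ divides $qp^n$. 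The key computation is the behaviour under the $G_n$-action: for $s \in G_n$, lift it to $\tilde s$ on $M_n$; then $\langle \tilde s\sigma\tilde s^{-1}, s\bar w\rangle = \tilde s\big(\langle \sigma, \bar w\rangle\big) = \langle \sigma,\bar w\rangle^{\omega_n(s)}$ by definition of the Teichmüller character $\omega_n$. Rewriting, $\langle \sigma, s\bar w\rangle = \langle s^{-1}\sigma s, \bar w\rangle^{\omega_n(s)}$, i.e. the adjoint of multiplication by $s$ on the radical side is multiplication by $\omega_n(s)\,s^{-1}$ on the Galois side — which is exactly the Spiegel involution $s \mapsto s^*$ extended $\Z$-linearly.

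From this adjointness the annihilator statement is formal: if $S_n = \sum a_s\,s$ kills $\mathrm{Rad}_n/L_n^{\times qp^n}$, then for every $\sigma$ and every $\bar w$,
$$\langle S_n^*\,\sigma, \bar w\rangle = \langle \sigma, S_n\,\bar w\rangle = \langle\sigma, 0\rangle = 1,$$
so nondegeneracy of the pairing forces $S_n^*\sigma = 0$, i.e. $S_n^*$ annihilates $\mathrm{Gal}(M_n/L_n)$; since $*$ is an involution on $(\Z/qp^n\Z)[G_n]$ the converse and the ``inversely'' both follow by applying the same argument to $A_n$ and using $(A_n^*)^* = A_n$. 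The identification of $\mathrm{Gal}(M_n/L_n)$ with ${\mathcal T}_K$ as $G_K$-modules is the content of the preceding lemma (valid for $n \geq n_0+e$), so an element annihilates ${\mathcal T}_K$ iff it annihilates $\mathrm{Gal}(M_n/L_n)$.

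For the last sentence, I would observe that ${\mathcal T}_K$ is a $\Z_p[G_K]$-module on which $\mathrm{Gal}(K_n/K) = \mathrm{Gal}(L_n/L)$ acts trivially — indeed $H_K^{\rm pr} \supseteq K_\infty$ and ${\mathcal T}_K = \mathrm{Gal}(H_K^{\rm pr}/K_\infty)$ is fixed by that subgroup — so the action of $(\Z/qp^n\Z)[G_n]$ on ${\mathcal T}_K$ factors through the quotient map $(\Z/qp^n\Z)[G_n] \twoheadrightarrow (\Z/qp^n\Z)[G_K]$ induced by restriction $G_n \twoheadrightarrow G_K$ (after reducing the $p$-part appropriately; note $qp^n$ and the exponent $p^e$ make the torsion prime-to-$p$ part of $G_n$, coming from $\mathrm{Gal}(L/K)$, act through a character already recorded). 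Hence $A_n$ and its image $A_{K,n}$ have the same annihilating effect, which is the asserted dependence only on $A_{K,n}$. The one point requiring care — and the mild obstacle — is bookkeeping the prime-to-$p$ part: $G_n$ is not quite $G_K \times \mathrm{Gal}(\Q_n/\Q_{n_0})$ because of the factor $\mathrm{Gal}(L/K)$ of order $\delta$, so one must check that $\omega_n$ restricted there is handled consistently and that ``projection to $(\Z/qp^n\Z)[G_K]$'' is the right normalization; this is routine but is where the hypothesis $n \geq n_0+e$ and the structure of $L_n = K(\mu_{qp^n})$ get used.
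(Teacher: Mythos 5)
Your proposal is correct and follows essentially the same route as the paper: the non-degenerate Kummer pairing between ${\rm Gal}(M_n/L_n)$ and ${\rm Rad}_n/L_n^{\times qp^n}$, the adjointness relation $\langle\sigma, s\bar w\rangle=\langle s^*\sigma,\bar w\rangle$ with $s^*=\omega_n(s)s^{-1}$ (which the paper quotes from \cite[Corollary I.6.2.1]{Gr1} rather than recomputing), and non-degeneracy plus the involutivity of $*$ to pass annihilators back and forth. The final point you flag as routine bookkeeping is exactly what the paper settles in one clause: the whole kernel ${\rm Gal}(L_n/K)$ of $G_n\twoheadrightarrow G_K$ (including the order-$\delta$ factor ${\rm Gal}(L_n/K_n)$) acts trivially on ${\rm Gal}(M_n/L_n)$, since ${\rm Gal}(M_n/K)$ is abelian and the preceding lemma already gives the $G_K$-module identification with ${\mathcal T}_K$, so no extra normalization involving $\omega_n$ is needed.
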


\begin{proof} 
To simplify, put $\ov{\rm Rad} := {\rm Rad}_n/L_n^{\times qp^n}$\!,
${\mathcal T} := {\rm Gal}(M_n/L_n) \simeq {\mathcal T}_K$. Let: 
\begin{align*}
\lambda\  :\ \  & \ov{\rm Rad} \times {\mathcal T} \tooo \mu_{qp^n}^{} \\ 
               &\ \ \  (\ov w, \tau) \ \   \longmapsto   \big( \sqrt [{}^{qp^n} \!\!\!]{w}\big)^{\tau-1};
\end{align*}
then $\lambda$ is a non-degenerated  $\Z/q p^n \Z$-bilinear form such that:
$$\lambda(\ov w^s, \tau) = \lambda(\ov w, \tau^{s^*}), 
\ \, \hbox{for all $s \in G_n$, }$$ 
where $s^* = \omega_n(s)\cdot s^{-1}$
(see e.g., \cite[Corollary I.6.2.1]{Gr1}).

\smallskip
Let $S_n = \sm_{s \in G_n} a_s \cdot s \in (\Z/q p^n \Z) [G_n]$; then, for all
$(\ov w, \tau) \in \ov{\rm Rad} \times {\mathcal T}$ we have:
$$\lambda(\ov w^{S_n}, \tau) = \prd_{s \in G_n}\lambda(\ov w^s, \tau)^{a_s}
=  \prd_{s \in G_n}\lambda(\ov w, \tau^{s^*} )^{a_s} =
\lambda(\ov w, \tau^{S_n^*}). $$ 

So, if $S_n$ annihilates $\ov{\rm Rad}$, then $\lambda(\ov w, \tau^{S_n^*})=1$
for all $\ov w\ \&\ \tau$; since $\lambda$ is non-degenerated, 
$\tau^{S_n^*}=1$ for all $\tau \in {\mathcal T}$. Whence
the annihilation of ${\mathcal T}$ by $A_n=S_n^*$ {\it (without any
assumption on $K$ nor on $p$)}, then by 
the projection $A_{K,n}$ since ${\rm Gal}(L_n/K)$ acts trivially 
on ${\rm Gal}(M_n/L_n)$.
\end{proof}

\begin{remarks}\label{plus}
(i) As we have mention, the radical ${\rm Rad}_n$ does not depend
realy on the field $L_n$ for $n \geq n_0+e$; so, if we consider the
radical of the maximal
$p$-ramified abelian $p$-extension of $L_n$, of exponent $qp^n$:
$${\rm Rad}'_n := \{w' \in L_n^\times,\  L_n(\sqrt [{}^{qp^n} \!\!\!]{w'})/L_n \ 
\hbox{is $p$-ramified} \}, $$

we obtain a group whose $p$-rank tends to infinity with $n$;
this is due mainely to the $\Z_p$-rank of the compositum of the 
$\Z_p$-extensions of $L_n$ (totally imaginary) and
from the less known ${\mathcal T}_{L_n}$ which contains 
${\mathcal T}_{K_n}$.
But since ${\mathcal T}_K$ is annihilated by $1-s_\infty$, 
${\rm Rad}_n/L_n^{\times qp^n}$ is annihilated by 
$(1-s_\infty)^*=1+s_\infty$ which means that only the ``minus part'' 
of ${\rm Rad}'_n/L_n^{\times qp^n}$ is needed, which eliminates the 
huge ``plus'' part containing in particular all the units.
Thus ${\rm Rad}_n$ is essentially given by the ``relative'' 
$S'_n$-units of $L_n$ ($S'_n$ being the set of $p$-places of $L_n$) 
and generators of some ``relative'' $p$-classes of $L_n$.

\smallskip
(ii) In the case $p=2$, let ${\mathcal T}^{\rm res}_K :=
{\rm tor}_{\Z_2} \big({\mathcal G}_{K,S}^{\rm res\,ab}\big)$, where
${\mathcal G}^{\rm res}_{K,S}$ is the Galois group of the maximal abelian
$S$-ramified (i.e., unramified outside $2$ but possibly complexified) 
pro-$2$-extension of $K$ and let 
${\rm Rad}^{\rm res}_n$ the corresponding radical 
$\{w \in L_n^\times,\  \sqrt [{}^{4 \cdot 2^n} \!\!\!]{w} \in M^{\rm res}_n\}$, 
where $M^{\rm res}_n$ is analogous to $M_n$ for the restricted sense.
We observe that in the restricted sense, we have the exact sequence
\cite[Theorem III.4.1.5]{Gr1}
$0 \to (\Z/2\Z)^d \too {\mathcal T}^{\rm res}_K \too {\mathcal T}_K \to 1$,
then a dual exact sequence with radicals. As in \cite{All2}, one may consider 
more general ray class fields and find results of annihilation with suitable
Stickelberger or Solomon elements.
\end{remarks}

\section{Stickelberger elements and cyclotomic numbers}

\subsection{General definitions}
Let $f \geq 1$ be any modulus and let $\Q^f$ 
be the corresponding cyclotomic field $\Q(\mu_f)$.\,\footnote{Such modulus are
conductors of the corresponding cyclotomic fields, except for an even integer 
not divisible by $4$; but this point of view is essential to establish the 
functional properties of Stickelberger elements and cyclotomic numbers.
So, if $f$ is odd, we distinguish, by abuse, the notations 
$\Q^f$ and $\Q^{2f}$ despite their equality.}
Let $L$ be a subfield of $\Q^f$.

\smallskip
(i) We define (where all Artin symbols are taken over $\Q$):\par
\centerline{${\mathcal S}_{\Q^f} :=-\sm_{a=1}^{f} 
\Big(\Frac{a}{f}-\Frac{1}{2} \Big) \cdot \Big(\Frac{\Q^f}{a} \Big)^{-1}$, }

\noindent
and the restriction:
$${\mathcal S}_L := {\rm N}_{\Q^f/L} ({\mathcal S}_{\Q^f}) := 
-\sm_{a=1}^{f} \Big(\Frac{a}{f}-\Frac{1}{2} \Big) \cdot \Big(\Frac{L}{a} \Big)^{-1}$$
to $L$ of ${\mathcal S}_{\Q^f}$, 
where $a$ runs trough the integers $a \in [1, f]$ prime to $f$.
In this case, one must precise the relation between $f$ and the conductor
$f_L$ of $L$.

\smallskip
We know that the properties of annihilation of ideal classes need to multiply 
${\mathcal S}_L$ by an element of the ideal annihilator of
the group $\mu_f$ (or $\mu_{2 f}$), which is generated by $f$ (or $2\,f$)
and the multiplicators:
$$\delta_c := 1 - c\cdot \Big(\Frac{\,\Q^f}{c} \Big)^{-1}, $$
for $c$ odd, prime to $f$. This shall give integral elements in 
the group algebra.

\smallskip
(ii) Then we define in the same way:
$$\eta^{}_{\Q^f} := 1- \zeta_f  \, \  \&  \,  \  \eta^{}_{L} :=
{\rm N}_{\Q^f/L} (1- \zeta_f), \ f\ne 1, $$
where $\zeta_f$ is a primitive $f$th root of 
unity for which we assume the coherent definitions $\zeta_{f}^{m'}=\zeta_m$ 
if $f = m' \cdot m$.

\smallskip
It is well known that if $f$ is not a prime power, then
$\eta^{}_f$ is a unit, otherwise, ${\rm N}_{\Q^f/\Q} (1- \zeta_f) = \ell$
if $f=\ell^r$, $\ell \geq 2$ prime, $r\geq 1$.

\begin{definition} 
Since $\Frac{f-a}{f} - \Frac{1}{2} = - \big(\Frac{a}{f} - \Frac{1}{2}\big)$,
${\mathcal S}_{\Q^f} = {\mathcal S}'_{\Q^f}\cdot(1- s_\infty)$
and ${\mathcal S}_L ={\mathcal S}'_L \cdot (1- s_\infty)$,
where $s_\infty := \big(\frac{\Q^f}{-1} \big)$ is the complex conjugation, and~where:
$${\mathcal S}'_{\Q^f} :=  -\sm_{a=1}^{f/2} \Big(\Frac{a}{f}-\Frac{1}{2} \Big)
\cdot  \Big(\Frac{\Q^f}{a} \Big)^{-1}\ \ \&\ \ \
{\mathcal S}'_L :=  -\sm_{a=1}^{f/2} \Big(\Frac{a}{f}-\Frac{1}{2} \Big)
\cdot  \Big(\Frac{L}{a} \Big)^{-1}.$$ 
\end{definition}

\subsection{Norms of Stickelberger elements and cyclotomic numbers}\label{norms}
Let $f \geq 1$ and $m \mid f$ be any modulus and let $\Q^f$
and $\Q^m \subseteq \Q^f$ be the corresponding cyclotomic fields. 
Let ${\rm N}_{\Q^f/\Q^m}$ be the restriction map:
$$\Q[{\rm Gal}(\Q^f / \Q)] \too \Q[{\rm Gal}(\Q^m/\Q)], $$ 
or the usual arithmetic norm in $\Q^f/\Q^m$. Consider as above:
$${\mathcal S}_{\Q^f} := - \sm_{a=1}^{f}
\Big(\Frac{a}{f}-\Frac{1}{2} \Big) \cdot \Big(\Frac{\Q^f}{a} \Big)^{-1}
\ \ \& \ \  \eta^{}_{\Q^f} := 1- \zeta_f  \ (f \ne 1).$$
We have, respectively:
\begin{align}
{\rm N}_{\Q^f/\Q^m} ({\mathcal S}_{\Q^f}) & = 
\prd_{\ell \mid f,\  \ell \nmid m}
\Big(1-\Big(\Frac{\Q^m}{\ell} \Big)^{-1}\Big) 
\cdot {\mathcal S}_{\Q^m},  \\
{\rm N}_{\Q^f/\Q^m} (\eta^{}_{\Q^f}) &= 
\big( \eta^{}_{\Q^m} \big)_{}^{\prod_{\ell \mid f,\  \ell \nmid m}
\big(1-\big(\frac{\Q^m}{\ell} \big)^{-1}\big)} \ \,\hbox{if $m\ne 1$}.
\end{align}\label{uc}

As we have explained in the previous footnote, if $m$ is odd, 
then we have:
$$\hbox{${\rm N}_{\Q^{2m}/\Q^m} ({\mathcal S}_{\Q^{2m}})  = 
\big(1-\big(\frac{\Q^m}{2} \big)^{-1}\big) 
\cdot {\mathcal S}_{\Q^m}$,  \ \ \ 
${\rm N}_{\Q^{2m}/\Q^m} (\eta^{}_{\Q^{2m}}) = 
\eta_{\Q^m}^{\big(1-\big(\frac{\Q^m}{2} \big)^{-1}\big)}$,}$$
where the ``norms'' ${\rm N}_{\Q^{2m}/\Q^m}$ are of course
the identity. For instance one verifies immediately that 
${\mathcal S}_{\Q^6}=\frac{1}{3} (1-s_\infty)$ and
${\mathcal S}_{\Q^3}=\frac{1}{6} (1-s_\infty)$, but since $2$ is inert
in $\Q^3/\Q$, $\big(1-\big(\frac{\Q^3}{2} \big)^{-1}\big) = 1-s_\infty$
and one must compute $(1-s_\infty) {\mathcal S}_{\Q^3} 
=\frac{1}{6} (1-s_\infty)^2=\frac{1}{3} (1-s_\infty)$ as expected.
We have ${\mathcal S}_{\Q^2}=0$ and 
${\mathcal S}_{\Q^1} =-\frac{1}{2}$.

\medskip
If $L$ (imaginary or real), of conductor $f$,
is an extension of $k$, of conductor $m \mid f$, let
${\mathcal S}_L := {\rm N}_{\Q^f/L}({\mathcal S}_{\Q^f})$ and
$\eta^{}_L :=  {\rm N}_{\Q^f/L}( \eta^{}_{\Q^f})$, then:
\begin{align*}\label{st2}
{\rm N}_{L/k}({\mathcal S}_{L}) & = 
\prd_{\ell \mid f,\  \ell \nmid m}
\Big(1-\Big(\Frac{k}{\ell} \Big)^{-1}\Big) 
\cdot {\mathcal S}_{k}, \\
{\rm N}_{L/k}({\mathcal S}'_{L}) & \equiv 
\prd_{\ell \mid f,\  \ell \nmid m}
\Big(1-\Big(\Frac{k}{\ell} \Big)^{-1}\Big) 
\cdot {\mathcal S}'_{k} \pmod{(1+s_\infty) \cdot \Q[G_k]}, \\
{\rm N}_{L/k}( \eta^{}_L) &= \ds
( \eta^{}_k)_{}^{\prod_{\ell \mid f,\  \ell \nmid m} 
\big(1-\big(\frac{k}{\ell} \big)^{-1} \big)} \ \,\hbox{if $m\ne 1$ 
(i.e., $k\ne \Q$)}.
\end{align*}

If $f=\ell^r$, $\ell$ prime, $r\geq 1$,
then ${\rm N}_{\Q^f/\Q}(\eta^{}_{\Q^f}) = \ell$,
otherwise ${\rm N}_{\Q^f/\Q}(\eta^{}_{\Q^f}) = 1$.

\medskip
This implies that ${\rm N}_{L/k}({\mathcal S}_L) = 0$ 
(resp. ${\rm N}_{L/k}( \eta^{}_L) = 1$) as soon as there exists 
a prime $\ell \mid f$, $\ell \nmid m$, totally split in $k$. 
In particular, if $k$ is real, the formula is valid for the infinite 
place and ${\rm N}_{L/k}({\mathcal S}_L) = 0$ (of course,
if $L \ne \Q$ is real, $S_L=0$).

\smallskip
For the classical proofs, we consider by induction the case $f=\ell \cdot m$,
with $\ell$ prime and examine the two cases $\ell \mid m$ and $\ell \nmid m$;
the case of Stickelberger 
elements been crucial for our purpose, we give again a proof
(a similar reasoning will be detailed for the Theorem \ref{thmfond}).

\smallskip
To simplify, put ${\mathcal S}_{\Q^f} =: {\mathcal S}_f$,
${\mathcal S}_{\Q^m} =: {\mathcal S}_m$, and consider:
$${\mathcal S}_f = -\sm_{a=1}^{f} 
\Big(\Frac{a}{f}-\Frac{1}{2} \Big) \cdot \Big(\Frac{\Q^f}{a} \Big)^{-1}, $$
for $f= \ell \cdot m$, $\ell \nmid m$,
where $a$ runs trough the integers $a \in [1, f]$ prime to $f$.

\smallskip
Put $a=b+ \lambda \cdot m$, $b\in [1, m]$, $\lambda \in [0, \ell-1]$; 
since $a$ must be prime to $f$, $b$ is automatically prime to $m$ but
we must exclude $\lambda_b^* \in [0, \ell-1]$ such that:
$$\hbox{$b + \lambda_b^*\cdot m = b'_\ell \cdot  \ell, \ \, b'_\ell \in [1, m]
 \  \hbox{($b'_\ell$ is necessarily prime to $m$)}$.}$$
We then have:
\begin{align*}
{\rm N}_{\Q^f /\Q^m} & ({\mathcal S}_f) \\
&= -\sm_{a=1}^{f} 
\Big(\Frac{a}{f}-\Frac{1}{2} \Big) \cdot \Big(\Frac{\Q^m}{a} \Big)^{-1} 
 =  -\sm_{b, \,\lambda \ne \lambda_b^*} 
\Big(\Frac{b + \lambda \, m}{\ell\, m}-\Frac{1}{2} \Big) \cdot \Big(\Frac{\Q^m}{b} \Big)^{-1} \\
& = -\sm_{b} \Big(\Frac{\Q^m}{b} \Big)^{-1}\sm_{\lambda \ne \lambda_b^*} 
  \Big(\Frac{b}{\ell\, m}  + \Frac{\lambda}{\ell}-\Frac{1}{2} \Big) \\
& =  -\sm_{b} \Big(\Frac{\Q^m}{b} \Big)^{-1} \Big(\Frac{\ell-1}{\ell} \Frac{b}{ m} 
- \Frac{\ell - 1}{2}  \Big) - \sm_{b} \Big(\Frac{\Q^m}{b} \Big)^{-1} \Frac{1}{\ell}
\Big(\Frac{\ell \,(\ell - 1)}{2} - \lambda_b^* \Big)\\
& = -  \Big(1 - \Frac{1}{\ell} \Big) \sm_{b} \Big(\Frac{\Q^m}{b} \Big)^{-1} \Frac{b}{m} +
\Frac{1}{\ell}  \sm_{b} \Big(\Frac{\Q^m}{b} \Big)^{-1} \lambda_b^* \,.
\end{align*}

Since the correspondence $b \mapsto b'_\ell$ is bijective on the set of 
elements prime to $m$ in $[1, m]$, one has, with 
$\lambda_b^* = \Frac{b'_\ell \cdot \ell -b}{m}$ and  
$\Big(\Frac{\Q^m}{b} \Big) = \Big(\Frac{\Q^m}{b'_\ell} \Big)
\Big(\Frac{\Q^m}{\ell} \Big)$: 
\begin{align*}
 \Frac{1}{\ell}  \sm_{b} \Big(\Frac{\Q^m}{b} \Big)^{-1} \lambda_b^*  
 &=  \sm_{b} \Big(\Frac{\Q^m}{b} \Big)^{-1} \Frac{b'_\ell}{m} -
  \Frac{1}{\ell} \sm_{b} \Big(\Frac{\Q^m}{b} \Big)^{-1} \Frac{b}{m} \\
& = \Big(\Frac{\Q^m}{\ell} \Big)^{-1} \sm_{b} \Big(\Frac{\Q^m}{b'_\ell} \Big)^{-1}
   \Frac{b'_\ell}{m} - \Frac{1}{\ell}  \sm_{b} \Big(\Frac{\Q^m}{b} \Big)^{-1}\Frac{b}{m} \\
& = \Big(  \Big(\Frac{\Q^m}{\ell} \Big)^{-1} - \Frac{1}{\ell} \Big) \cdot 
   \sm_{b} \Big(\Frac{\Q^m}{b} \Big)^{-1} \Frac{b}{m}\,.
\end{align*}

Thus we obtain:
\begin{align*}
{\rm N}_{\Q^f/\Q^m} ({\mathcal S}_f)  &=
- \Big(1 -  \Frac{1}{\ell} \Big)  \sm_{b} \Big(\Frac{\Q^m}{b} \Big)^{-1} \Frac{b}{m}+
 \Big(  \Big(\Frac{\Q^m}{\ell} \Big)^{-1} - \Frac{1}{\ell} \Big) \cdot 
   \sm_{b} \Big(\Frac{\Q^m}{b} \Big)^{-1} \Frac{b}{m} \\
& =  -  \Big(1 -  \Big(\Frac{\Q^m}{\ell} \Big)^{-1} \Big) \sm_{b} 
  \Big(\Frac{\Q^m}{b} \Big)^{-1}\Frac{b}{m}\,.
\end{align*}

But $\Frac{1}{2}  \sm_{b} \Big(\Frac{\Q^m}{b} \Big)^{-1}\!
\Big(1 -  \Big(\Frac{\Q^m}{\ell} \Big)^{-1} \Big) = 0$; so replacing
$\Frac{b}{m}$ by $\Frac{b}{m} - \Frac{1}{2}$ we get:
$${\rm N}_{\Q^f/\Q^m} ({\mathcal S}_f)= 
\Big(1 - \Big(\Frac{\Q^m}{\ell}\Big)^{-1}\Big)
\cdot {\mathcal S}_m. $$

Then it is easy to compute that if $\ell \mid m$, any $\lambda \in [0, \ell-1]$
is suitable, giving:
$${\rm N}_{\Q^f/\Q^m} ({\mathcal S}_f)={\mathcal S}_m. $$

The case of cyclotomic elements $\eta_f$ is exactly the same, 
replacing the additive setting by the multiplicative one.

\subsection{Multiplicators of Stickelberger elements}
The conductor of $L_n$, $n \geq n_0$, is $f_{L_n} = {\rm l.c.m.\,}(f_K, q p^n)$ (Lemma 
\ref{conductor}). So in general $f_{L_n} = q p^n\cdot f'$ with $p \nmid f'$,
except if $f_K$ is divisible by a large power of $p$ in which case one 
must take $n$ large enough in the practical computations (write
$f_K = qp^{n_0+r} f'$, $r \geq 0$, and take $n \geq n_0+r$).
In some formulas we shall abbreviate $f_{L_n}$ by $f_n$.

\smallskip
Let $c$ be an (odd) integer, prime to $f_n$, and let:
\begin{equation}\label{mult1}
{\mathcal S}_{L_n}(c) := \Big(1 - c \,\Big(\Frac{L_n}{c} \Big)^{-1} \Big)\cdot 
{\mathcal S}_{L_n}.
\end{equation}

Then ${\mathcal S}_{L_n}(c) \in \Z[G_n]$ as we have explain; indeed, we have:
$${\mathcal S}_{L_n} (c)=\Frac{-1}{f_n} \sm_a 
\Big[a\, \Big(\Frac{L_n}{a} \Big)^{-1} - a c \,\Big(\Frac{L_n}{a} \Big)^{-1}
\Big(\Frac{L_n}{c} \Big)^{-1}\Big]
+ \Frac{1-c}{2} \sm_a \Big(\Frac{L_n}{a} \Big)^{-1}.$$

Let $a'_{c} \in [1, f_n]$ be the unique integer such that 
$a'_{c} \cdot c \equiv a \pmod {f_n}$
and put $a'_{c} \cdot  c = a + \lambda^n_a(c) f_n$, 
$\lambda^n_a(c) \in \Z$; then, using the bijection 
$a \mapsto a'_{c}$ in the second summation and the fact that
$\Big(\Frac{L_n}{a'_{c}} \Big) \Big(\Frac{L_n}{c} \Big) = \Big(\Frac{L_n}{a} \Big)$,
this yields:
\begin{equation*}
\begin{aligned}
 {\mathcal S}_{L_n}(c)& 
=\Frac{-1}{f_n}  \Big[ \sm_a a \, \Big(\Frac{L_n}{a} \Big)^{-1}\!\! - 
\sm_a a'_{c} \cdot c \,\Big(\Frac{L_n}{a'_{c}} \Big)^{-1} \! \Big(\Frac{L_n}{c} \Big)^{-1}\Big] 
 + \Frac{1-c}{2} \sm_a \Big(\Frac{L_n}{a} \Big)^{-1} \\
&=\Frac{-1}{f_n} \sm_a \Big [a - a'_{c} \cdot  c \Big] \Big(\Frac{L_n}{a} \Big)^{-1}
+ \Frac{1-c}{2}\sm_a \Big(\Frac{L_n}{a} \Big)^{-1} \\
& =  \sm_a \Big[\lambda^n_a(c)+ \Frac{1-c}{2} \Big]
 \Big(\Frac{L_n}{a} \Big)^{-1} \in \Z[G_n].
\end{aligned}
\end{equation*}

\begin{lemma}\label{sprime} We have the relations
$\lambda^n_{f_n-a}(c) + \frac{1-c}{2} = -\big(\lambda^n_{a}(c) + \frac{1-c}{2} \big)$
for all $a \in [1, f_n]$ prime to $f_n$. Then:
\begin{equation}\label{mult2}
{\mathcal S}'_{L_n}(c) :=  \sm_{a=1}^{f_n/2} \Big[\lambda^n_a(c)+ \Frac{1-c}{2} \Big]
\Big(\Frac{L_n}{a} \Big)^{-1} \in \Z[G_n]
\end{equation} 
is such that
${\mathcal S}_{L_n}(c) = {\mathcal S}'_{L_n}(c) \cdot (1-s_\infty)$, whence
${\mathcal S}_{L_n}(c)^* = {\mathcal S}'_{L_n}(c){}^* \cdot (1+s_\infty)$.
\end{lemma}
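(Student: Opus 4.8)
The plan is to prove the symmetry relation $\lambda^n_{f_n-a}(c) + \frac{1-c}{2} = -\bigl(\lambda^n_a(c) + \frac{1-c}{2}\bigr)$ first, and then deduce the factorization ${\mathcal S}_{L_n}(c) = {\mathcal S}'_{L_n}(c)\cdot(1-s_\infty)$ as a formal consequence, exactly as the definition of ${\mathcal S}'_L$ was deduced from ${\mathcal S}_L$ in the Definition before Section \ref{norms}. The last assertion, about the image under the Spiegel involution, will follow from $(1-s_\infty)^* = 1+s_\infty$, which is the computation $s_\infty^* = \omega_n(s_\infty)\cdot s_\infty^{-1} = (-1)\cdot s_\infty = -s_\infty$ already used in Remarks \ref{plus}(i).

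First I would recall the defining relation $a'_c\cdot c = a + \lambda^n_a(c)\,f_n$, where $a'_c\in[1,f_n]$ is the unique integer with $a'_c c\equiv a\pmod{f_n}$. The key observation is that if $a\mapsto a'_c$ then $f_n-a\mapsto f_n-a'_c$, since $(f_n-a'_c)c = f_n c - a'_c c \equiv -a \equiv f_n-a\pmod{f_n}$ and $f_n-a'_c$ again lies in $[1,f_n]$ (using that $a$, hence $a'_c$, is prime to $f_n$, so neither equals $f_n$). Substituting into the definition, $(f_n-a'_c)\cdot c = (f_n-a) + \lambda^n_{f_n-a}(c)\,f_n$; on the other hand $(f_n-a'_c)\cdot c = f_n c - a'_c c = f_n c - a - \lambda^n_a(c) f_n = (f_n - a) + (c - 1 - \lambda^n_a(c))\,f_n$. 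Comparing the two expressions for the same integer gives $\lambda^n_{f_n-a}(c) = c - 1 - \lambda^n_a(c)$, which rearranges to $\lambda^n_{f_n-a}(c) + \frac{1-c}{2} = -\bigl(\lambda^n_a(c)+\frac{1-c}{2}\bigr)$, as claimed.

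Next I would use this symmetry to split the sum $\mathcal{S}_{L_n}(c) = \sum_{a=1}^{f_n}\bigl[\lambda^n_a(c)+\frac{1-c}{2}\bigr]\bigl(\tfrac{L_n}{a}\bigr)^{-1}$ (the formula established just before the lemma) into the ranges $a\in[1,f_n/2]$ and $a\in[f_n/2,f_n]$, reindexing the second range by $a\mapsto f_n-a$. Since $\bigl(\tfrac{L_n}{f_n-a}\bigr) = \bigl(\tfrac{L_n}{-1}\bigr)\bigl(\tfrac{L_n}{a}\bigr) = s_\infty\bigl(\tfrac{L_n}{a}\bigr)$, the coefficient symmetry turns the second sum into $-\sum_{a=1}^{f_n/2}\bigl[\lambda^n_a(c)+\frac{1-c}{2}\bigr] s_\infty\bigl(\tfrac{L_n}{a}\bigr)^{-1}$, so the total becomes $\mathcal{S}'_{L_n}(c)\cdot(1-s_\infty)$ with $\mathcal{S}'_{L_n}(c)$ as in \eqref{mult2}. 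In particular $\mathcal{S}'_{L_n}(c)\in\Z[G_n]$ because its coefficients are among those of the already-integral element $\mathcal{S}_{L_n}(c)$. Applying the Spiegel involution, which is a ring anti-homomorphism on $(\Z/qp^n\Z)[G_n]$ (indeed an involution), to the identity $\mathcal{S}_{L_n}(c) = \mathcal{S}'_{L_n}(c)(1-s_\infty)$ yields $\mathcal{S}_{L_n}(c)^* = (1-s_\infty)^*\,\mathcal{S}'_{L_n}(c)^* = (1+s_\infty)\,\mathcal{S}'_{L_n}(c)^*$, and since $1+s_\infty$ is central this equals $\mathcal{S}'_{L_n}(c)^*(1+s_\infty)$.

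I do not expect a serious obstacle here: the entire argument is a bookkeeping exercise with the division-with-remainder identity defining $\lambda^n_a(c)$, and the only place requiring a moment's care is checking that the involution $a\mapsto f_n-a$ preserves the set of residues prime to $f_n$ in $[1,f_n]$ and is compatible with the bijection $a\mapsto a'_c$ — which is immediate once one writes $(f_n-a'_c)c\equiv f_n-a$. The mild subtlety worth flagging explicitly in the write-up is that $f_n$ is even (it is divisible by $q$, hence by $4$ when $p=2$ and by $p$ when $p$ is odd but in fact $f_n$ is always even since $q\mid f_n$ and $q\in\{4,p\}$ with $p$ odd meaning... — more simply, $qp^n\mid f_n$ forces $f_n$ even), so that "$a\in[1,f_n/2]$" makes sense and no fixed point $a=f_n-a$ occurs among integers prime to $f_n$.
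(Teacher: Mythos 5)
Your argument is correct and is essentially the paper's own proof: you establish $(f_n-a)'_c=f_n-a'_c$ and hence $\lambda^n_{f_n-a}(c)=c-1-\lambda^n_a(c)$ exactly as the paper does, and the factorization through $1-s_\infty$ together with $(1-s_\infty)^*=1+s_\infty$ are the same formal consequences (the involution is multiplicative since $G_n$ is abelian and $\omega_n$ is a character). One small caveat: your closing claim that $f_n$ is necessarily even is false when $p$ is odd and $f_K$ is odd (then $f_n=\mathrm{l.c.m.}(f_K,p^{n+1})$ is odd), but this is harmless, since in either parity the involution $a\mapsto f_n-a$ has no fixed point among residues prime to $f_n$ (for $f_n$ even, $f_n/2$ is not prime to $f_n$), so the half-range sum in \eqref{mult2} is well defined and the pairing argument goes through unchanged.
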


\begin{proof} By definition, the integer $(f_n-a)'_c$ is in $[1, f_n]$ and 
congruent modulo $f_n$ to
$(f_n-a) \,c^{-1} \equiv - a c^{-1} \equiv - a'_c \pmod {f_n}$; thus
$(f_n-a)'_c =f_n-  a'_c$ and 
$$\lambda^n_{f_n-a}(c) =\Frac{(f_n-a)'_c \,c- (f_n-a)}{f_n}
= \Frac{(f_n-a'_c)\, c- (f_n-a)}{f_n} = c-1- \lambda^n_a(c), $$

whence $\lambda^n_{f_n-a}(c) + \frac{1-c}{2} =
 - \big( \lambda^n_{a}(c) + \frac{1-c}{2}  \big)$ and the result.
\end{proof}

The multiplicator $\delta_c := \big(1 - c \,\big(\frac{L_n}{c} \big)^{-1} \big)$ has 
a great importance since the image of $\delta_c $ by the Spiegel involution is 
$\delta_c^* := 1 - \big(\frac{L_n}{c} \big) \pmod{qp^n}$;
the order of the Artin symbol of $c$ shall be crucial.

\section{Annihilation of radicals and Galois groups}

\subsection{Annihilation of ${\rm Rad}_n/L_n^{\times qp^n}$}
We begin with the classical property of annihilation of class groups
of imaginary abelian fields by modified Stickelberger elements
${\mathcal S}_{L_n}(c) = \delta_c \cdot {\mathcal S}_{L_n}$.
Before let's give two technical lemmas.
Recall that ${\mathcal S}_{L_n}(c) = {\mathcal S}'_{L_n}(c)\cdot (1-s_\infty)$
and that, from \S\,\ref{norms}, the ${\mathcal S}_{L_n}$,
${\mathcal S}_{L_n}(c)$ and ${\mathcal S}'_{L_n}(c) 
\pmod {(1+s_\infty) \Z[G_n]}$ form coherent families in 
$\ds\varprojlim_{n \geq n_0+e} \Q[G_n]$ for the ``norm'' since
$f_{L_n}$ and $f_{L_{n+h}}$ are divisible by the same prime 
numbers for all $h \geq 0$.

\begin{lemma}\label{lemma1}
Let $\zeta \in \mu_{qp^n}$, $n \geq n_0+e$. 
If $\zeta \in {\rm Rad}_n$ (or ${\rm Rad}^{\rm res}_n$ when $p=2$) 
then $\zeta = 1$.
\end{lemma}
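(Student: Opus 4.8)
The plan is to determine what it means, in Kummer-theoretic terms, for a root of unity $\zeta \in \mu_{qp^n}$ to lie in the radical ${\rm Rad}_n = \{w \in L_n^\times : \sqrt[qp^n]{w} \in M_n\}$, and then to show this forces $\zeta$ to be trivial. First I would recall that $M_n/L_n$ is the maximal $S$-ramified (i.e., unramified outside $p$ and $\infty$) abelian sub-$p$-extension inside the relevant field, of exponent dividing $p^e$; indeed by the earlier discussion ${\rm Rad}_n \subseteq L_n^{\times qp^{n-e}}$, so the exponent of $M_n/L_n$ is really $p^e$, not $qp^n$. The key point is that $L_n(\sqrt[qp^n]{\zeta}) = L_n(\zeta_{q^2p^{2n}})$ — adjoining a $qp^n$-th root of a primitive $qp^n$-th root of unity — is a genuinely ramified extension at $p$ (it is a piece of the cyclotomic tower, ramified at $p$), unless the root taken is already in $L_n$.

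The core of the argument: if $\zeta \in {\rm Rad}_n$, then by definition $L_n(\sqrt[qp^n]{\zeta}) \subseteq M_n$, so this extension is $S$-ramified (unramified outside $p$; and in the restricted case $p=2$, unramified outside $2$ possibly with complexification allowed). But $\zeta$ is itself a $p$-power root of unity, so $\sqrt[qp^n]{\zeta}$ is again a root of unity, of $p$-power order; writing $\zeta = \zeta_{p^j}$ with $p^j \mid qp^n$, the field $L_n(\sqrt[qp^n]{\zeta})$ is $L_n(\mu_{p^{j} \cdot qp^n / \gcd}) $, a layer of the cyclotomic $\Z_p$-extension together with possibly a finite ramified piece. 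The crucial arithmetic fact is that a nontrivial such extension $L_n(\mu_{qp^{n+1}})/L_n$ (or any step beyond $L_n = K(\mu_{qp^n})$) is ramified at the primes above $p$ in a way that exceeds what is allowed: one should check that this forces the extension to be trivial, i.e. $\sqrt[qp^n]{\zeta}\in L_n$, which (since $L_n = K(\mu_{qp^n})$ contains exactly $\mu_{qp^n}$) means $\zeta \in \mu_{qp^n}^{\,qp^n} = \{1\}$.

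Concretely I would argue as follows. Suppose $\zeta \neq 1$; then $\zeta$ generates $\mu_{p^j}$ for some $j \geq 1$ with $p^j \mid qp^n$. A $qp^n$-th root of $\zeta$ is a primitive $p^{j'}$-th root of unity with $p^{j'} = p^j \cdot qp^n / \gcd(qp^n, \text{something})$ — in any case $p^{j'} > qp^n$ once $\zeta$ is nontrivial and not already a $qp^n$-th power, which it cannot be inside $\mu_{qp^n}$. Hence $L_n(\sqrt[qp^n]{\zeta})$ strictly contains $L_n = K(\mu_{qp^n})$ and equals $K(\mu_{qp^{n'}})$ for some $n' > n$ (up to the $\delta$-factor at the top when $p \neq 2$). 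This extension is ramified at every prime of $L_n$ above $p$, with ramification index a positive power of $p$; in particular it is \emph{not} contained in $M_n$, because $M_n/L_n$ has exponent $p^e$ and — more to the point — $M_n \subseteq H_{L_n}^{\rm pr}$-type object is ramified only in a controlled way: the subextension $K(\mu_{qp^{n'}})/L_n$ of the cyclotomic tower is not inside $M_n = F_n(\mu_q)$ since $M_n \cap K_\infty(\mu_q) = L_n$ by the direct-compositum hypothesis defining $F$. That last containment $M_n \cap (\text{cyclotomic tower over } L_n) = L_n$ is exactly what is guaranteed by $H_K^{\rm pr}$ being the direct compositum of $K_\infty$ and $F$.

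The main obstacle I anticipate is handling the case $p = 2$ with the \emph{restricted} radical ${\rm Rad}^{\rm res}_n$ cleanly, since there one allows ramification (complexification) at the infinite places, and $\zeta = -1$ is a genuine subtlety: $\sqrt[4\cdot 2^n]{-1}$ is a primitive $2^{n+3}$-th root of unity, and one must verify that adjoining it is ramified at $2$ (not merely at $\infty$), hence still excluded by $M^{\rm res}_n$ being unramified outside $2$ and $\infty$ but with the cyclotomic-tower-disjointness still in force. I would dispatch this by noting that $L_n \supseteq \mu_{4\cdot 2^n}$ already contains $\mu_8$, so any further $2$-power cyclotomic extension is ramified at the dyadic primes, and the disjointness of $M^{\rm res}_n$ from the cyclotomic $\Z_2$-extension over $L_n$ (which follows from the restricted analogue of the direct-compositum property, using that ${\mathcal T}^{\rm res}_K$ is still finite) finishes it. The non-restricted case is strictly easier. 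In all cases the conclusion $\zeta^{qp^n} = 1$ combined with $\zeta \in \mu_{qp^n}$ gives $\zeta = 1$.
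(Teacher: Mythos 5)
Your proposal is correct and follows essentially the paper's own argument: a nontrivial $\zeta$ would make $L_n(\sqrt[{}^{qp^n}]{\zeta})$ a strictly larger cyclotomic layer $L_{n+h}$, $h\geq 1$ (since $\mu_{p^\infty}\cap L_n^\times=\mu_{qp^n}$), and this contradicts the linear disjointness $L_{n+h}\cap M_n=L_n$ (resp. $L_{n+h}\cap M_n^{\rm res}=L_n$) coming from the direct-compositum definition of $F$. The side remarks about ramification at $p$ are not needed once that disjointness is invoked, so the core of your argument coincides with the paper's proof.
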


\begin{proof} If $\zeta\ne 1$ with $L_n(\sqrt [{}^{qp^n} \!\!\!]{\zeta}) 
\subseteq M_n$ (or $M^{\rm res}_n$), we would have 
$L_n(\sqrt [{}^{qp^n} \!\!\!]{\zeta})= L_{n+h}$, where $h\geq 1$
since $\sqrt [{}^{qp^n} \!\!\!]{\zeta}$ is of order $\geq qp^{n+1}$
and since $\mu_{p^\infty} \cap L_n^\times = \mu_{qp^n}$,
which is absurd because of the linear disjonction 
$L_{n+h} \cap M_n=L_n$ (or $L_{n+h} \cap M^{\rm res}_n=L_n$).
\end{proof}

\begin{lemma} \label{lemma2}
Let $w_0 \in {\rm Rad}_n$ be real. Then $w_0^2 \in L_n^{\times qp^n}$.
\end{lemma}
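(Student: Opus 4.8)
The plan is to exploit the Kummer-theoretic setup together with the fact that $M_n/L_n$ captures the torsion module ${\mathcal T}_K$, which is annihilated by $1-s_\infty$. Recall that by Remark \ref{plus}(i), the dual statement is that ${\rm Rad}_n/L_n^{\times qp^n}$ is annihilated by $(1-s_\infty)^* = 1+s_\infty$. So first I would take $w_0 \in {\rm Rad}_n$ real, meaning $w_0^{s_\infty} = w_0$ (up to $qp^n$-th powers, or literally if one prefers a genuine real element); then the class $\ov{w_0}$ of $w_0$ in ${\rm Rad}_n/L_n^{\times qp^n}$ satisfies $\ov{w_0}^{1+s_\infty} = 1$, i.e.\ $\ov{w_0}^{s_\infty} = \ov{w_0}^{-1}$. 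But $w_0$ real forces $\ov{w_0}^{s_\infty} = \ov{w_0}$ as well, hence $\ov{w_0}^2 = 1$ in ${\rm Rad}_n/L_n^{\times qp^n}$, which is exactly the assertion $w_0^2 \in L_n^{\times qp^n}$.

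The one point that needs care — and which I expect to be the main (small) obstacle — is justifying that ${\rm Rad}_n/L_n^{\times qp^n}$ really is killed by $1+s_\infty$, since a priori $s_\infty$ only acts on the \emph{Galois side} ${\rm Gal}(M_n/L_n) \simeq {\mathcal T}_K$ by $1-s_\infty$. Here one invokes the non-degenerate Kummer pairing $\lambda$ of Lemma \ref{duality}, with the adjunction formula $\lambda(\ov w^s, \tau) = \lambda(\ov w, \tau^{s^*})$: if $\tau^{1-s_\infty} = 1$ for all $\tau \in {\mathcal T}$ (which holds since $K$ is real, so $s_\infty$ acts trivially on ${\mathcal T}_K$ — indeed ${\mathcal T}_K$ is a quotient of ${\mathcal G}_K = {\rm Gal}(H_K^{\rm pr}/K)$ and $H_K^{\rm pr}$ is totally real, or at worst complexified for $p=2$), then applying $(1-s_\infty)^* = 1 - \omega_n(s_\infty) s_\infty^{-1} = 1 + s_\infty$ (using $\omega_n(s_\infty) = -1$ and $s_\infty^2 = 1$) shows $\lambda(\ov w^{1+s_\infty}, \tau) = \lambda(\ov w, \tau^{(1+s_\infty)^*})$; since $(1+s_\infty)^* = 1 - s_\infty$ annihilates every $\tau$, the left side is trivial for all $\tau$, and non-degeneracy of $\lambda$ gives $\ov w^{1+s_\infty} = 1$ for all $\ov w \in {\rm Rad}_n/L_n^{\times qp^n}$.

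Thus the clean order of steps is: (1) record that $s_\infty$ acts trivially on ${\mathcal T}_K \simeq {\rm Gal}(M_n/L_n)$ because $K$ (hence $H_K^{\rm pr}$ in the relevant sense) is real; (2) feed $1-s_\infty \in \mathrm{Ann}({\mathcal T})$ through the Spiegel adjunction of Lemma \ref{duality} to conclude $1+s_\infty \in \mathrm{Ann}\big({\rm Rad}_n/L_n^{\times qp^n}\big)$; (3) for $w_0 \in {\rm Rad}_n$ real, combine $\ov{w_0}^{1+s_\infty} = 1$ with $\ov{w_0}^{s_\infty} = \ov{w_0}$ to get $\ov{w_0}^2 = 1$, i.e.\ $w_0^2 \in L_n^{\times qp^n}$. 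No genuinely hard computation is involved; the only subtlety is being precise about what ``real'' means for an element of the radical (honest $s_\infty$-invariance versus invariance modulo $qp^n$-th powers) and checking that $\mu_{qp^n} \subseteq L_n$ causes no parity obstruction when $p=2$ — but for $p=2$ one has $q = 4$, the exponent $qp^n$ is divisible by $4$, and the argument $\ov{w_0}^2 = 1$ still lands us in $L_n^{\times qp^n}$ as claimed.
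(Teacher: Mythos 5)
Your argument is correct and is essentially the paper's own proof: since $K$ is real, $1-s_\infty$ kills ${\rm Gal}(M_n/L_n)\simeq{\mathcal T}_K$, the Spiegel adjunction of the Kummer pairing (Lemma \ref{duality}) transfers this to the annihilation of ${\rm Rad}_n/L_n^{\times qp^n}$ by $1+s_\infty$, and then $w_0$ real gives $w_0^{1+s_\infty}=w_0^2\in L_n^{\times qp^n}$. The extra detail you supply (the explicit pairing computation and $(1-s_\infty)^*=1+s_\infty$) just makes explicit the ``thus'' in the paper's one-line deduction.
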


\begin{proof} Since $K$ is real, we know that
$1-s_\infty$ annihilates the $(\Z/ qp^n \Z) [G_n]$-module
${\rm Gal}(M_n/L_n)$, thus $1+s_\infty$ annihilates 
${\rm Rad}_n/L_n^{\times  qp^n}$ and $w_0^{1+s_\infty}=w_0^2
\in  L_n^{\times  qp^n}$ (this does not work for the restricted sense
since the minus part of ${\mathcal T}^{\rm res}_K$ is of order $2^d$).
\end{proof}

\begin{theorem} Let $p^e$ be the exponent of ${\mathcal T}_K :=
{\rm tor}_{\Z_p} \big({\mathcal G}_{K,S}^{\rm ab}\big)$ 
($p$-ramification in the ordinary sense). For $p=2$, let $2^{e^{\rm res}}$ 
be the exponent of ${\mathcal T}^{\rm res}_K :=
{\rm tor}_{\Z_2} \big({\mathcal G}_{K,S}^{\rm res\,ab}\big)$, where
${\mathcal G}^{\rm res}_{K,S}$ is the Galois group of the maximal $S$-ramified 
in the restricted sense (i.e., unramified outside $2$ but complexified) 
pro-$2$-extension of $K$ and let ${\rm Rad}^{\rm res}_n$ be 
the corresponding radical.

\smallskip
(i) $p>2$. For all $n \geq  n_0+e$, the $(\Z/qp^n \Z)[G_n]$-module 
${\rm Rad}_n/L_n^{\times qp^n}$ is annihilated by 
${\mathcal S}'_{L_n}(c)$. Thus, 
${\mathcal S}'_{L_n}(c){}^*$ annihilates ${\mathcal T}_K$.

\smallskip
(ii) $p=2$, ordinary sense. The annihilation occurs with $2\,{\mathcal S}_{L_n}(c)$ 
and with $4\,{\mathcal S}'_{L_n}(c)$. Thus $2\,{\mathcal S}_{L_n}(c){}^*$ and 
$4\,{\mathcal S}'_{L_n}(c){}^*$ annihilate ${\mathcal T}_K$.

\smallskip
(iii) $p=2$, restricted sense. For all $n \geq  n_0+e^{\rm res}$, the 
$(\Z/4 \cdot 2^n \Z)[G_n]$-module 
${\rm Rad}^{\rm res}_n/L_n^{\times 4 \cdot 2^n}$ is annihilated by 
$2\,{\mathcal S}_{L_n}(c)$; thus $2\,{\mathcal S}_{L_n}(c){}^*$
annihilates ${\mathcal T}^{\rm res}_K$.
\end{theorem}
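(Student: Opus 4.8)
The plan is to establish the annihilation of the radical quotients $\mathrm{Rad}_n/L_n^{\times qp^n}$ (resp.\ $\mathrm{Rad}^{\mathrm{res}}_n/L_n^{\times 4\cdot 2^n}$); the ``thus'' clauses about $\mathcal T_K$ (resp.\ $\mathcal T^{\mathrm{res}}_K$) are then immediate from the Spiegel duality of Lemma~\ref{duality}. Write $\mathcal S:=\mathcal S_{L_n}(c)$ and $\mathcal S':=\mathcal S'_{L_n}(c)$, so that $\mathcal S=\mathcal S'\,(1-s_\infty)\in\Z[G_n]$ by Lemma~\ref{sprime}. The three parts all follow from the single claim
\[
\text{$\mathcal S$ annihilates $\mathrm{Rad}_n/L_n^{\times qp^n}$ when $p$ is odd, and $2\mathcal S$ does when $p=2$.}
\]
Indeed, for $p$ odd or $p=2$ in the ordinary sense, Lemma~\ref{lemma2} shows $\mathrm{Rad}_n/L_n^{\times qp^n}$ is killed by $1+s_\infty$, so on it $1-s_\infty$ acts as $2$ and $\mathcal S$ (resp.\ $2\mathcal S$) acts as $2\mathcal S'$ (resp.\ $4\mathcal S'$); since $\gcd(2,qp^n)=1$ for $p$ odd, ``$2\mathcal S'$ kills'' upgrades to ``$\mathcal S'$ kills'' — write $1=2a+qp^n b$, so $x^2\in L_n^{\times qp^n}\Rightarrow x\in L_n^{\times qp^n}$ — which is (i); for $p=2$ this gives both ``$4\mathcal S'$ kills'' and ``$2\mathcal S$ kills'', i.e.\ (ii). For the restricted sense one runs the same argument with $M^{\mathrm{res}}_n,\mathrm{Rad}^{\mathrm{res}}_n,e^{\mathrm{res}}$; the only change is that Lemma~\ref{lemma2} is unavailable (the minus part of $\mathcal T^{\mathrm{res}}_K$ has order $2^d$), so one cannot pass to $4\mathcal S'$ and only (iii) survives.

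To prove the claim, take $w\in\mathrm{Rad}_n$. Since $L_n(\sqrt[qp^n]{w})\subseteq M_n$ and $M_n/L_n$ is unramified outside $p$, Kummer theory forces $qp^n\mid v_{\mathfrak q}(w)$ for every prime $\mathfrak q\nmid p$ of $L_n$; hence the prime-to-$p$ part of the ideal $(w)$ is $\mathfrak a^{qp^n}$ for some ideal $\mathfrak a$, and its $p$-part is some $\mathfrak b$. Now invoke the classical Stickelberger theorem for the imaginary abelian field $L_n$ of conductor $f_n$ (Lemma~\ref{conductor}): by the norm relations of \S\ref{norms}, $\mathcal S=\delta_c\cdot\mathcal S_{L_n}$ is the restriction to $L_n$ of $\mathcal S_{\Q^{f_n}}(c)$, which lies in $\Z[\mathrm{Gal}(\Q^{f_n}/\Q)]$ (Lemma~\ref{sprime}) and annihilates $\mathrm{Cl}_{\Q^{f_n}}$, hence annihilates $\mathrm{Cl}_{L_n}$. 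Therefore $\mathfrak a^{\mathcal S}$ and $\mathfrak b^{\mathcal S}$ are principal; taking for generators the explicit cyclotomic numbers furnished by the sharp form of Stickelberger's theorem (Gauss/Jacobi sums at primes not above $p$, norms of $1-\zeta_{f}$ at the ramified primes, matching the formulas of \S\ref{norms}), say $(\alpha)=\mathfrak a^{\mathcal S}$ with $\alpha$ prime to $p$ and $(\gamma)=\mathfrak b^{\mathcal S}$ with $\gamma$ a $p$-primary cyclotomic number, we get that $v:=w^{\mathcal S}\alpha^{-qp^n}$ satisfies $(v)=\mathfrak b^{\mathcal S}=(\gamma)$, so $u:=v/\gamma$ is a \emph{global unit} of $L_n$, while $v\in\mathrm{Rad}_n$ because $\mathrm{Rad}_n$ is a $\Z[G_n]$-submodule of $L_n^\times$ containing $L_n^{\times qp^n}$.

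It remains to show $v\in L_n^{\times qp^n}$ (then $w^{\mathcal S}=\alpha^{qp^n}v\in L_n^{\times qp^n}$). Write $v=u\gamma$ with $u\in E_{L_n}$. \emph{Modulo the contribution of the primes above $p$}, i.e.\ treating $\gamma$ as trivial in $L_n^\times/L_n^{\times qp^n}$, $v$ is a minus global unit of the CM field $L_n$ (use $v\in\mathrm{Rad}_n$ and, in the ordinary sense, Lemma~\ref{lemma2}); since the minus part of the unit group of a CM field modulo torsion is trivial (Kummer's lemma, index $Q\in\{1,2\}$), $v$ is then a root of unity times a $qp^n$-th power, up to index-$2$ ambiguities (the index $Q$, and whether $-1\in L_n^{\times qp^n}$); and the root of unity must itself be trivial modulo $L_n^{\times qp^n}$ by Lemma~\ref{lemma1}, since $\mu_{qp^n}\cap\mathrm{Rad}_n=\{1\}$. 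The residual $\{\pm1\}$-ambiguity is harmless for $p$ odd, because $qp^n$ is odd and $-1=(-1)^{qp^n}\in L_n^{\times qp^n}$, and for $p=2$ it is exactly what the extra factor $2$ in the claim absorbs. The genuinely technical point — which I expect to be the main obstacle — is to justify the italicised reduction: that $\gamma$, equivalently the $p$-divisor $\mathfrak b^{\mathcal S}$, is a $qp^n$-th power modulo roots of unity, and that in the restricted sense the loss of Lemma~\ref{lemma2} costs no more than the stated factor $2$. This is handled not by the crude ``$\mathfrak b^{\mathcal S}$ principal'' but by the explicit $p$-adic factorisation of the cyclotomic numbers $\eta_{L_n}$ together with the norm-coherence of the family $\{\mathcal S_{L_n}(c)\}_{n\ge n_0+e}$ from \S\ref{norms} — the kind of computation carried out in detail for Theorem~\ref{thmfond}. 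Granting it, the claim follows, and with it all three parts; (iii) is obtained by repeating the argument verbatim for the restricted data.
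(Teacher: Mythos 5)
Your overall architecture matches the paper's (Kummer decomposition of $(w)$, Stickelberger for the prime-to-$p$ part of the divisor, the $\pm$-analysis of the resulting unit via Lemma \ref{lemma1} and Lemma \ref{lemma2}, and the factor-of-$2$ bookkeeping for $p=2$), but there is a genuine gap at exactly the point you flag as ``the genuinely technical point'': the treatment of the $p$-part $\mathfrak b$ of $(w)=\mathfrak a^{qp^n}\mathfrak b$. You only extract from Stickelberger that $\mathfrak b^{{\mathcal S}_{L_n}(c)}$ is \emph{principal}, introduce a generator $\gamma$, and then must show $\gamma$ is a $qp^n$-th power modulo roots of unity --- which you defer to an unproved ``explicit $p$-adic factorisation of cyclotomic numbers.'' That deferral is not cosmetic: the primes of $\mathfrak b$ divide the conductor $f_n$ of $L_n$, so the Gauss/Jacobi-sum generators you invoke are not available there, and ``granting it, the claim follows'' leaves the central step unproved.

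The paper closes this gap with one observation that makes $\gamma$ unnecessary: $\mathfrak b^{{\mathcal S}_{L_n}(c)}$ is the \emph{unit ideal}, not merely principal. Indeed, the action of a group-ring element on a prime $\mathfrak p_n\mid p$ factors through $G_n/D_p$, i.e.\ through the restriction ${\rm N}_{L_n/k}({\mathcal S}_{L_n}(c))$ to the decomposition field $k$ of $p$. Since $k$ has conductor prime to $p$ while $f_{L_n}=qp^nf'$ (Lemma \ref{conductor}), the norm relations of \S\,\ref{norms} produce the Euler factor $1-\big(\frac{k}{p}\big)^{-1}$, and $\big(\frac{k}{p}\big)=1$ because $p$ splits completely in its own decomposition field; hence this restriction is $0$ and $\mathfrak b^{{\mathcal S}_{L_n}(c)}=(1)$. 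Consequently $(w^{{\mathcal S}_{L_n}(c)})=(\mathfrak a^{{\mathcal S}_{L_n}(c)})^{qp^n}=(\alpha_n)^{qp^n}$ by plain Stickelberger, so $w^{{\mathcal S}_{L_n}(c)}=\alpha_n^{qp^n}\varepsilon_n$ with $\varepsilon_n$ a global unit, and the rest of your unit analysis (namely: $\varepsilon_n^{1+s_\infty}$ is a $qp^n$-th power because $(1+s_\infty){\mathcal S}_{L_n}(c)=0$, $\varepsilon_n^{1-s_\infty}$ is a root of unity killed by Lemma \ref{lemma1}, whence $\varepsilon_n^2\in L_n^{\times qp^n}$, and the factor $2$ is absorbed for $p$ odd and retained for $p=2$, including the restricted sense where Lemma \ref{lemma2} fails) goes through essentially as you and the paper both describe. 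You should replace the paragraph about $\gamma$ and the ``sharp form'' of Stickelberger by this Euler-factor argument; as written, the proof is incomplete precisely where the theorem's one non-routine idea is needed.
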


\begin{proof} Let $w \in {\rm Rad}_n$; since
$L_n(\sqrt [{}^{qp^n} \!\!\!]{w})/L_n$ is $p$-ramified, 
$(w) = {\mathfrak a}^{qp^n}\! \cdot {\mathfrak b}$
where ${\mathfrak a}$ is an ideal of $L_n$, prime to $p$, and ${\mathfrak b}$
is a product of prime ideals ${\mathfrak p}_n$ of $L_n$ dividing~$p$. 
Let ${\mathfrak p}_n \mid {\mathfrak b}$
and consider ${\mathfrak p}_n^{{\mathcal S}_{L_n}(c)}$; one can replace 
${\mathcal S}_{L_n}(c)$ by its restriction to the decomposition field $k$ 
(possibly $k=\Q$) of $p$ in the abelian extension $L_n/\Q$, which gives rise to
the Euler factor $1-\Big(\Frac{k}{p} \Big)^{-1}$ since $k$, of
conductor prime to $p$, is strictely contained in $L_n$
of conductor $qp^n f'$ for $n \geq  n_0+e$; 
so this factor is $0$ and ${\mathfrak b}^{{\mathcal S}_{L_n}(c)}=1$.

\smallskip
From the principality of the ideal ${\mathfrak a}^{{\mathcal S}_{L_n}(c)}$
(Stickelberger's theorem) there exists
$\alpha_n \in L_n^\times$ and a unit $\varepsilon_n$ of $L_n$ such that:
\begin{equation}\label{rad}
w^{{\mathcal S}_{L_n}(c)} = \alpha_n^{qp^n} \!\!\cdot \varepsilon_n. 
\end{equation}

We see that $\varepsilon_n^{1+s_\infty}$ is the $q p^n$th power of 
a unit of $L_n$: consider $\varepsilon_n^{1+s_\infty}$ in \eqref{rad}
with the fact that ${\mathcal S}_{L_n}(c)={\mathcal S}'_{L_n}(c) (1-s_\infty)$.
Since the $\Z$-rank of the groups of units of $L_n$ and $L_n^+$ (the 
maximal real subfield of $L_n$) are equal, a power $\varepsilon_n^N$ of 
$\varepsilon_n$ is a real unit; so 
$\varepsilon_n^{1-s_\infty}$ is a torsion element and
$\varepsilon_n^2 = \varepsilon_n^{1+s_\infty}
\varepsilon_n^{1-s_\infty}$ is equal, up to a $qp^n$th power, to a $p$-torsion 
element of the form $\zeta' \in {\rm Rad}_n$. Thus $\zeta'=1$ (Lemma \ref{lemma1})
and $\varepsilon_n^2 \in L_n^{\times qp^n}$.
 
\smallskip
(i) Case $p\ne 2$. We deduce from the above
that $\varepsilon_n \in L_n^{\times p^{n+1}}$.
We have $w^{{\mathcal S}'_{L_n}(c)(1-s_\infty)} = \beta_n^{p^{n+1}}$; 
but $\beta_n^{1+s_\infty} = 1$ (the property is also true for $p=2$ since 
the result is a totally positive root of unity in $L_n^+$, but the 
proof only works taking the square of the relation \eqref{rad} using
$\varepsilon_n^2$), and there exists $\gamma_n \in L_n^\times$ 
such that $\beta_n=\gamma_n^{1-s_\infty}$, and
$w^{{\mathcal S}'_{L_n}(c)} \cdot \gamma_n^{-p^{n+1}} = w_0$,
a real number in the radical, thus a $p^{n+1}$th power 
(Lemma \ref{lemma2}) (as above, the proof for $p=2$ only works 
taking once again the square of this relation to get $w_0^2$).
Other proof for any $p\geq 2$: since ${\mathcal T}_K$ is annihilated by $1-s_\infty$,
${\rm Rad}_n/L_n^{\times qp^n}$ is annihilated by $1+s_\infty$, thus
$w^{1-s_\infty} \in w^2 \cdot L_n^{\times qp^n}$ for all $w \in {\rm Rad}_n$,
and $w^{{\mathcal S}_{L_n}(c)} = w^{2\, {\mathcal S}'_{L_n}(c)}$ up to 
$L_n^{\times qp^n}$.

\medskip
(ii) Case $p=2$ in the ordinary sense (so $L_n^+ = K_n$). 
The result is obvious taking the square
in the previous computations giving $\varepsilon_n^2$ instead of $\varepsilon_n$
for the annihilation with $2\,{\mathcal S}_{L_n}(c)$, then 
$w_0^2$ for the annihilation with $4\,{\mathcal S}'_{L_n}(c)$.

\medskip
(iii) Case $p=2$ in the restricted sense. The proof is in fact contained in the 
same relation $(w) = {\mathfrak a}^{4 \cdot 2^n}\! \cdot {\mathfrak b}$,
for all $w \in {\rm Rad}^{\rm res}_n$,
where ${\mathfrak a}$ is an ideal of $L_n$, prime to $2$, and ${\mathfrak b}$
is a product of prime ideals ${\mathfrak p}_n$ of $L_n$ dividing $2$, then the 
relation \eqref{rad}, $n\geq n_0+e$. 
\end{proof}

\subsection{Computation of ${\mathcal S}_{L_n}(c)^*$ or 
${\mathcal S}'_{L_n}(c)^*$ -- Annihilation of ${\mathcal T}_K$} \label{slnc}

From the expressions \eqref{mult1} and \eqref{mult2} of ${\mathcal S}_{L_n}(c)$, 
the image by the Spiegel involution is:
\begin{equation*}
{\mathcal S}_{L_n}(c)^* \equiv  
 \sm_{a=1}^{f_n}  \Big[ \lambda^n_a(c) + \Frac{1-c}{2} \Big]\, 
 a^{-1} \Big(\Frac{L_n}{a} \Big) \pmod {q p^n},
\end{equation*}
which defines a coherent familly  $({\mathcal S}_{L_n}(c)^*)_n 
\in \ds\varprojlim_{n \geq n_0+e} \Z/qp^n \Z[G_n]$ of annihilators of the
Galois groups ${\rm Gal}(M_n/L_n) \simeq {\mathcal T}_K$.
In the case $p\ne 2$, one may use equivalently ${\mathcal S}'_{L_n}(c){}^*$
with the half summation.

\smallskip
Since the operation of ${\rm Gal}(L_n/K)$ on ${\rm Gal}(M_n/L_n)$ is trivial, 
by restriction of ${\mathcal S}_{L_n}(c)^*$ to $K$ (see Lemma \ref{duality}),
one obtains a coherent familly of annihilators of ${\mathcal T}_K$ denoted 
$({\mathcal A}_{K,n}(c))_n \in \ds\varprojlim_{n \geq n_0+e} \Z/qp^n \Z[G_K]$,
whose $p$-adic limit:

\medskip
\centerline{${\mathcal A}_K(c) := \ds\lim_{n \to \infty}{\mathcal A}_{K,n}(c) 
= \ds\lim_{n \to \infty} \sm_{a=1}^{f_n} \Big[ \lambda^n_a(c) + \Frac{1-c}{2} \Big]\, 
 a^{-1} \Big(\Frac{K}{a} \Big) \in \Z_p[G_K] $}
 
\medskip
is a canonical annihilator of ${\mathcal T}_K$ that we shall link to
$p$-adic $L$-functions; of course, it is sufficient to know its coefficients 
modulo the exponent $p^e$ of ${\mathcal T}_K$ and in a programming 
point of view, the element ${\mathcal A}_{K,n_0+e}(c)$ annihilates
${\mathcal T}_K$, knowing that \cite[Program I, \S\,3.2]{Gr3} gives the
group structure of ${\mathcal T}_K$.

\begin{remark}
Let $\alpha_{L_n}:= \sm_{a=1}^{f_n} a^{-1}  \Big(\Frac{L_n}{a} \Big) \equiv
\Big[\sm_{a=1}^{f_n}  \Big(\Frac{L_n}{a} \Big)^{-1} \Big]^*$; we have:

\centerline{$\alpha_{L_n} := \!\sm_{a=1}^{f_n/2} a^{-1}  \Big(\Frac{L_n}{a} \Big) +
(f_n-a)^{-1}  \Big(\Frac{L_n}{f_n-a} \Big) \! \equiv\! 
\sm_{a=1}^{f_n/2} a^{-1} \Big(\Frac{L_n}{a} \Big)(1-s_\infty) \!\!\!\pmod {f_n}$}

\smallskip
which annihilates ${\mathcal T}_K$ and is such that 
${\rm N}_{L_n/K}(\alpha_{L_n}) \equiv 0 \pmod {q p^n}$
since $K$ is real.
We shall neglect such expressions and use the symbol
$\ \wt{\equiv}\ $, where $A \ \wt\equiv\  B \pmod {p^{n+1}}$ 
will mean $A = B + \mu \cdot p^{n+1}
+ \nu \cdot \sm_{a=1}^{f_n} a^{-1} \Big(\Frac{K}{a} \Big)$,
in the group algebra $\Z_p[G_K]$, $\mu, \nu$ in $\Z_p$
(we put the modulus $p^{n+1}$ instead of $qp^n$ to cover, 
subsequently, the case $p=2$; moreover, $p^{n+1}$ annihilates 
${\mathcal T}_K$ since $n \geq n_0+e$). By abuse, we still denote 
${\mathcal A}_K(c) = \ds\lim_{n \to \infty}\hbox{$\sum_{a=1}^{f_n} 
 \lambda^n_a(c) \, a^{-1} \big(\frac{K}{a} \big)$}$.
\end{remark}

Thus, we have obtained:

\begin{theorem}\label{st*}
Let $c$ be any integer prime to $2p$ and to the conductor 
of $K$. 

Assume $n \geq n_0+e$ and let $f_n$
be the conductor of $L_n$; for
all $a \in [1, f_n]$, prime to $f_n$, let $a'_{c}$ be the 
unique integer in $[1, f_n]$ such that $a'_{c} \cdot c \equiv a \pmod {f_n}$ and put
$a'_{c} \cdot  c - a = \lambda^n_a(c)\, f_n$, $\lambda^n_a(c) \in \Z$.

Let ${\mathcal A}_{K,n}(c) := \!\sm_{a=1}^{f_n} \lambda^n_a(c) \, 
a^{-1} \Big(\Frac{K}{a} \Big)$
and put ${\mathcal A}_{K,n}(c) = {\mathcal A}'_{K,n}(c) \cdot (1+s_\infty)$
where ${\mathcal A}'_{K,n}(c) = \!\sm_{a=1}^{f_n/2} \lambda^n_a(c) \, 
a^{-1} \Big(\Frac{K}{a} \Big)$.
Let ${\mathcal A}_K(c) :=  \ds\lim_{n \to \infty}\ \Big[
\hbox{$\!\sm_{a=1}^{f_n}$} \lambda^n_a(c) \, a^{-1} \Big(\Frac{K}{a} \Big)\Big]$
and put ${\mathcal A}_K(c) =: {\mathcal A}'_K(c) \cdot (1+s_\infty)$.

\medskip
(i) For $p\ne 2$, ${\mathcal A}'_K(c)$ annihilates the $\Z_p[G_K]$-module
${\mathcal T}_K$. 

\medskip
(ii) For $p=2$, the annihilation is true for $2 \cdot {\mathcal A}_K(c)$
and $4 \cdot {\mathcal A}'_K(c)$.
\end{theorem}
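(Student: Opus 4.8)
The plan is to deduce Theorem \ref{st*} directly from the previous theorem together with Lemma \ref{duality}, since essentially all the substantive work has already been done; what remains is bookkeeping about the passage to the inverse limit and the descent from $G_n$ to $G_K$. First I would recall that by the previous theorem, for $p\neq 2$ the element ${\mathcal S}'_{L_n}(c)$ annihilates the $(\Z/qp^n\Z)[G_n]$-module ${\rm Rad}_n/L_n^{\times qp^n}$, so by Lemma \ref{duality} its Spiegel image ${\mathcal S}'_{L_n}(c)^*$ annihilates ${\rm Gal}(M_n/L_n)\simeq{\mathcal T}_K$; similarly for $p=2$ one has annihilation of the radical by $2\,{\mathcal S}_{L_n}(c)$ and $4\,{\mathcal S}'_{L_n}(c)$, hence of ${\mathcal T}_K$ by $2\,{\mathcal S}_{L_n}(c)^*$ and $4\,{\mathcal S}'_{L_n}(c)^*$. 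The second half of Lemma \ref{duality} lets me replace each of these by its projection to $(\Z/qp^n\Z)[G_K]$, since ${\rm Gal}(L_n/K)$ acts trivially on ${\rm Gal}(M_n/L_n)$.

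Next I would identify the projected Spiegel images with the elements ${\mathcal A}_{K,n}(c)$, ${\mathcal A}'_{K,n}(c)$ of the statement. By the explicit formula for ${\mathcal S}_{L_n}(c)$ computed in Lemma \ref{sprime} (namely ${\mathcal S}_{L_n}(c)=\sum_a[\lambda^n_a(c)+\frac{1-c}{2}]\,(\frac{L_n}{a})^{-1}$) and the definition of the Spiegel involution $s\mapsto\omega_n(s)s^{-1}$, applying $*$ and using that $(\frac{L_n}{a})^*\equiv a\,(\frac{L_n}{a})\pmod{qp^n}$ turns the coefficient of $a^{-1}(\frac{L_n}{a})$ into $\lambda^n_a(c)+\frac{1-c}{2}$; projecting to $G_K$ and discarding, via the $\wt\equiv$ convention of the Remark, the term $\frac{1-c}{2}\sum_a a^{-1}(\frac{K}{a})$ (which is a multiple of $\alpha_K$ and hence already annihilates ${\mathcal T}_K$) gives exactly ${\mathcal A}_{K,n}(c)=\sum_{a=1}^{f_n}\lambda^n_a(c)\,a^{-1}(\frac{K}{a})$. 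The half-summation version ${\mathcal A}'_{K,n}(c)$ arises the same way from ${\mathcal S}'_{L_n}(c)$ via Lemma \ref{sprime}, which shows $\lambda^n_{f_n-a}(c)+\frac{1-c}{2}=-(\lambda^n_a(c)+\frac{1-c}{2})$, so that ${\mathcal A}_{K,n}(c)={\mathcal A}'_{K,n}(c)\cdot(1+s_\infty)$.

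Then I would address the passage to the limit. The coherence of the families $({\mathcal S}_{L_n}(c))_n$ and $({\mathcal S}'_{L_n}(c))_n$ under the norm maps was noted just before Lemma \ref{lemma1} (the conductors $f_{L_n}$ are all divisible by the same primes), and this transports through the Spiegel involution and the projection to $G_K$ to give a coherent family $({\mathcal A}_{K,n}(c))_n\in\varprojlim_n\Z/qp^n\Z[G_K]$; its limit ${\mathcal A}_K(c)\in\Z_p[G_K]$ then annihilates ${\mathcal T}_K$ because each ${\mathcal A}_{K,n_0+e}(c)$ does and $p^{n_0+e}$ already kills ${\mathcal T}_K$. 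Concretely one need only check that the coefficients $\lambda^n_a(c)$ for $a$ in a fixed residue range stabilize modulo $p^{n+1}$ as $n$ grows, which follows from the definition $a'_c\cdot c-a=\lambda^n_a(c)f_n$ by reducing modulo a power of $p$.

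The main obstacle, such as it is, is purely notational rather than mathematical: one must be careful that the Spiegel involution is defined on $(\Z/qp^n\Z)[G_n]$ while the annihilators of ${\mathcal T}_K$ we ultimately want live in $\Z_p[G_K]$, so the argument has to interleave three operations — applying $*$, projecting $G_n\twoheadrightarrow G_K$, and taking the inverse limit — and verify they are compatible. The compatibility of $*$ with the restriction $G_n\to G_m$ is exactly the point that makes the family coherent, and the triviality of the $G_n/G_K$-action on the radical side (equivalently, the second sentence of Lemma \ref{duality}) is what legitimizes the projection; once these are in place, the rest is the routine identification of coefficients sketched above, together with the observation that the factor $2$ (resp. $4$) in case $p=2$ is inherited verbatim from the previous theorem.
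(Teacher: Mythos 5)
Your proposal is correct and follows the paper's own route essentially verbatim: the paper obtains Theorem \ref{st*} by combining the radical-annihilation theorem of Section 5 with Lemma \ref{duality}, computing the Spiegel images ${\mathcal S}_{L_n}(c)^*$ and ${\mathcal S}'_{L_n}(c)^*$ explicitly via Lemma \ref{sprime}, projecting to $(\Z/qp^n\Z)[G_K]$ (licit since ${\rm Gal}(L_n/K)$ acts trivially), discarding the $\frac{1-c}{2}$-terms through the $\ \wt{\equiv}\ $ convention of the Remark, and passing to the coherent limit, exactly as you describe. The factors $2$ and $4$ in the case $p=2$ are likewise inherited directly from that theorem, so your argument matches the paper's proof in both structure and level of detail.
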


In practice, when the exponent $p^e$ is known, one can use 
$n=n_0+e$ and the annihilators ${\mathcal A}_{K,n}(c)$ or 
${\mathcal A}'_{K,n}(c)$, the annihilator limit ${\mathcal A}_K(c)$ 
being related to $p$-adic $L$-functions
of primitive characters, thus giving the other approach
than Solomon one, that we shall obtain in Theorem \ref{thmp}. 

\begin{remark}  We have proved in a seminar report (1977) that for 
$p=2$, ${\mathcal S}'_{L_n}(c)$ annihilates $\Cl_{L_n}/\Cl_{L_n}^{\,0}$, 
where $\Cl_{L_n}$ is the $2$-class group of $L_n$ and 
where $\Cl_{L_n}^{\,0}$ is generated by the classes
of the {\it the invariant ideals} in $L_n/K_n$. 

\smallskip
This shows that some $2$-classes may give an 
obstruction; but ${\rm Rad}_n$ is particular as we have 
explained in Remark \ref{plus}. 
In \cite{Gre}, Greither gives suitable statements
about Stickelberger's theorem for $p=2$, using the main theorems 
of Iwasawa's theory about the orders $\frac{1}{2} L_2(1,\chi)$ 
of the isotypic components.
\end{remark}

From this, as well as some numerical experiments, and the 
roles of $\varepsilon_n$ and $w_0$ in the above reasonings,
we may propose the following conjecture:

\begin{conjecture} \label{conj}
Let $p=2$ and let $K$ be a real abelian number field linearly 
disjoint from the cyclotomic $\Z_2$-extension. 
Put ${\mathcal A}_K(c) = {\mathcal A}'_K(c) \cdot (1+ s_\infty)$
(see formula of Theorem \ref{st*}).
Then  ${\mathcal A}'_K(c)$ annihilates ${\mathcal T}_K$.
\end{conjecture}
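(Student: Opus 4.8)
The plan is to reduce the statement, exactly as in \S\,\ref{slnc}, to an assertion about radicals, and then to remove the two factors $2$ that are lost in the argument underlying Theorem~\ref{st*}(ii). Since $K$ is disjoint from the cyclotomic $\Z_2$-extension we have $n_0=0$; fix $n\geq e$ large enough that $f_{L_n}=qp^nf'$ with $p\nmid f'$ (Lemma~\ref{conductor}). Because the Spiegel pairing of Lemma~\ref{duality} is non-degenerate and ${\rm Gal}(L_n/K)$ acts trivially on ${\rm Gal}(M_n/L_n)$, it suffices to prove that ${\mathcal S}'_{L_n}(c)$ itself --- not $4\,{\mathcal S}'_{L_n}(c)$ --- annihilates the $(\Z/qp^n\Z)[G_n]$-module ${\rm Rad}_n/L_n^{\times qp^n}$ modulo the negligible ideal generated by $\sm_a a^{-1}\big(\Frac{L_n}{a}\big)$; then letting $n\to\infty$ and restricting to $K$ yields the annihilation of ${\mathcal T}_K$ by ${\mathcal A}'_K(c)$.

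So let $w\in{\rm Rad}_n$. The Stickelberger argument proving Theorem~\ref{st*} gives $w^{{\mathcal S}_{L_n}(c)}=\alpha_n^{qp^n}\varepsilon_n$ with $\varepsilon_n$ a unit of $L_n$; writing ${\mathcal S}_{L_n}(c)={\mathcal S}'_{L_n}(c)(1-s_\infty)$ and using that $1+s_\infty$ kills ${\rm Rad}_n/L_n^{\times qp^n}$, one gets $\big(w^{{\mathcal S}'_{L_n}(c)}\big)^{2}\equiv\varepsilon_n$, and by Lemma~\ref{lemma1} one may take $\varepsilon_n$ with $\varepsilon_n^{2}\in L_n^{\times qp^n}$. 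Thus the class $\bar w:=w^{{\mathcal S}'_{L_n}(c)}L_n^{\times qp^n}\in{\rm Rad}_n/L_n^{\times qp^n}$ satisfies $\bar w^{2}=\bar\varepsilon_n$, $\bar\varepsilon_n^{2}=1$, and the whole content of the conjecture is that $\bar w=1$. I would prove this in two steps. First, show that the unit $\varepsilon_n$ is already a $qp^n$-th power modulo the negligible ideal, i.e.\ $\bar\varepsilon_n=1$; the obstruction is the index $\order\big(\langle-1\rangle\cap{\rm N}_{K/\Q}(U_K)\big)$ occurring in \eqref{clinfty}, which controls both the index-$2$ gap between the units of $L_n$ and of $L_n^{+}=K_n$ and the descent of $\sqrt{\varepsilon_n}$ into $\mu_{qp^n}$. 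Hilbert 90 for $L_n/K_n$ then gives a \emph{real} $w_0\in{\rm Rad}_n$ with the same class as $w^{{\mathcal S}'_{L_n}(c)}$, and the second step is to upgrade Lemma~\ref{lemma2} to the assertion that such a real $w_0\in{\rm Rad}_n$ lies in $L_n^{\times qp^n}$, not merely $w_0^{2}$; by the 1977 seminar result recalled in the remark after Theorem~\ref{st*}, the obstruction here sits in the classes of the invariant ideals $\Cl_{L_n}^{\,0}$ of $L_n/K_n$, and the ``minus'' projection of Remark~\ref{plus}, combined once more with \eqref{clinfty}, should kill it after applying $(\cdot)^{*}$ and restricting to $K$.

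The hard part is precisely this elimination of the two residual factors $2$ --- one carried by the unit $\varepsilon_n$, one by the real radical element $w_0$ --- and the verification that neither survives the passage ${\mathcal S}'_{L_n}(c)\mapsto{\mathcal S}'_{L_n}(c)^{*}$, restriction to $G_K$, and the $2$-adic limit, so that ${\mathcal A}'_K(c)$ and not $4\,{\mathcal A}'_K(c)$ is an annihilator. A useful partial check is to compare, via Theorem~\ref{thmp}, the $\chi$-part of ${\mathcal A}'_K(c)$ with $\Frac12\,L_2(1,\chi)$ for the primitive characters $\chi$ of $K$, which should already show that no extra power of $2$ is forced in the semi-simple situation; the genuine difficulty --- and the reason the statement is only conjectural --- is that when $2\mid[K:\Q]$ this isotypic data does not determine the $\Z_2[G_K]$-module structure of ${\mathcal T}_K$, so the two sharpened steps above, amounting to a $2$-adic non-semi-simple refinement of Stickelberger's theorem, must be established directly.
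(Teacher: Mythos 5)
The statement you are asked to prove is labelled a \emph{conjecture} in the paper, and the paper offers no proof of it: it is motivated only by the observation that the two extra factors of $2$ in Theorem \ref{st*}\,(ii) arise from the unit $\varepsilon_n$ and the real radical element $w_0$ in the proof of the annihilation theorem, together with the numerical evidence of Section 6 (quadratic, cubic and quartic tables). Your proposal correctly reproduces this diagnosis --- reducing via the Spiegel duality of Lemma \ref{duality} to the assertion that ${\mathcal S}'_{L_n}(c)$ itself annihilates ${\rm Rad}_n/L_n^{\times qp^n}$, and locating the two obstructions in (a) the descent from $\varepsilon_n^2\in L_n^{\times qp^n}$ to $\varepsilon_n\in L_n^{\times qp^n}$ and (b) the upgrade of Lemma \ref{lemma2} from $w_0^2$ to $w_0$ --- but it does not close either gap. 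Step (a) is asserted to be controlled by the index $\order\big(\langle-1\rangle\cap{\rm N}_{K/\Q}(U_K)\big)$ of \eqref{clinfty} and step (b) by the vanishing of the contribution of $\Cl_{L_n}^{\,0}$ after the Spiegel involution and restriction to $K$; both claims are stated with ``should'' and no argument is given. These are exactly the points at which the known proof breaks down for $p=2$, so your text is a restatement of why the conjecture is plausible rather than a proof of it.

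Two further cautions. First, your step (a) as written is circular: from $\bar w^2=\bar\varepsilon_n$ and $\bar\varepsilon_n^2=1$ one cannot conclude $\bar w=1$ even granting $\bar\varepsilon_n=1$, since ${\rm Rad}_n/L_n^{\times qp^n}$ has plenty of $2$-torsion; you still need the full strength of step (b), and Hilbert 90 only produces a real representative up to the ambiguities you are trying to control. Second, the paper's own example in \S\,\ref{555} ($f_K=233$, where $1-\sigma$ fails to annihilate ${\mathcal T}_K={\mathcal W}_K\simeq(\Z/2\Z)^3$) shows that the analogous \emph{further} division by $2$ genuinely fails, so any argument in this direction must detect precisely where the halving stops; an argument that ``eliminates residual factors of $2$'' without quantitative control would prove too much. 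If you want to make partial progress, the semi-simple comparison you suggest at the end (matching the $\chi$-components of ${\mathcal A}'_K(c)$ against $\frac{1}{2}L_2(1,\chi)$ via Theorem \ref{thmp}) is indeed the one tractable piece, but as you note it cannot determine the $\Z_2[G_K]$-structure when $2\mid[K:\Q]$, which is where the conjecture lives.
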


If there exists, in the class of ${\mathcal A}'_K(c)$ modulo 
$\sum_{\sigma \in G_K} \sigma$, an element of the form 
$2 \cdot {\mathcal A}''_K(c)$, ${\mathcal A}''_K(c) \in \Z_p[G_K]$, 
one may ask if ${\mathcal A}''_K(c)$ does annihilate ${\mathcal T}_K$.
We shall give a counterexample for the annihilation of ${\mathcal T}_K$ 
by ${\mathcal A}''_K(c)$ (see \S\,\ref{555}), but we ignore if this may be
true under some assumptions.

\subsection{Experiments for cyclic cubic fields with 
$p \equiv 1 \pmod 3$}\label{cubiccase}
To simplify we suppose $f_K$ prime. The first part of the program gives 
a defining polynomial.
A second part computes the $p$-adic valuation of $\order{\mathcal T}_K$ 
using \cite[Program I, \S\,3.2]{Gr3} and gives ${\mathcal A}_K(c) 
= \Lambda_0 + \Lambda_1 \sigma^{-1} + \Lambda_2 \sigma^{-2}$ 
modulo a power of $p$, after the choice of $c$, 
prime to $2 p f_K$, with an Artin symbol of order $3$; in the program
${\sf p^{ex}}$ is the exponent $p^e$ of ${\mathcal T}_K$
and ${\sf fn}$ the conductor of $L_n$. The parameter ${\sf nt}$
must be $>{\sf ex}$.

\smallskip\footnotesize
\begin{verbatim}
{p=7;nt=8;forprime(f=7,10^4,if(Mod(f,3)!=1,next);
for(bb=1,sqrt(4*f/27),if(vf==2 & Mod(bb,3)==0,next);A=4*f-27*bb^2;
if(issquare(A,&aa)==1,if(Mod(aa,3)==1,aa=-aa);
P=x^3+x^2+(1-f)/3*x+(f*(aa-3)+1)/27;K=bnfinit(P,1);Kpn=bnrinit(K,p^nt);
C5=component(Kpn,5);Hpn0=component(C5,1);Hpn=component(C5,2);
h=component(component(component(K,8),1),2);L=List;ex=0;
i=component(matsize(Hpn),2);R=0;for(k=1,i-1,co=component(Hpn,i-k+1);
if(Mod(co,p)==0,R=R+1;val=valuation(co,p);if(val>ex,ex=val);
listinsert(L,p^val,1)));Hpn1=component(Hpn,1);
vptor=valuation(Hpn0/Hpn1,p);if(vptor>1,S0=0;S1=0;S2=0;
pN=p*p^ex;nu=(f-1)/3;fn=pN*f;z=znprimroot(f);
zz=lift(z);t=lift(Mod((1-zz)/f,2*p));c=zz+t*f;
for(a=1,fn/2,if(gcd(a,fn)!=1,next);asurc=lift(a*Mod(c,fn)^-1);
lambda=(asurc*c-a)/fn;u=Mod(lambda*a^-1,pN);
a0=lift((a*z^0)^nu);a1=lift((a*z^2)^nu);a2=lift((a*z)^nu);
if(a0==1,S0=S0+u);if(a1==1,S1=S1+u);if(a2==1,S2=S2+u));
L0=lift(S0);L1=lift(S1);L2=lift(S2);
j=Mod(y,y^2+y+1);Y=L0+j*L1+j^2*L2;nj=valuation(norm(Y),p);
print(f," ",P," vptor=",vptor," T_K=",L," A= ",L0," ",L1," ",L2," ",nj)))))}
\end{verbatim}

\normalsize\smallskip
Let's give a partial table for $p=7$ and $13$, in which 
${\sf vptor} :=v_p(\order {\mathcal T}_K)$ (examples limited to ${\sf vptor} \geq 2$),
and ${\sf nj} = v_p \big({\rm N}_{\Q(j)/\Q}(\Lambda_0+ \Lambda_1 \cdot j  
+ \Lambda_2 \cdot  j^2) \big)$; one sees that, as expected, all the 
examples give ${\sf nj=vptor}$ since ${\mathcal T}_K$ is a finite
$\Z_7[j]$-module which may be decomposed with two $7$-adic characters:

\smallskip
\footnotesize
\begin{verbatim}
f      P                      vptor    T_K         coefficients       nj
313    x^3+x^2-104*x+371        2      [7,7]       [41, 41, 48]       2
577    x^3+x^2-192*x+171        2      [49]        [183, 17, 280]     2
823    x^3+x^2-274*x+61         3      [343]       [761, 419, 437]    3
883    x^3+x^2-294*x+1439       2      [7,7]       [14, 0, 35]        2
1051   x^3+x^2-350*x-2608       2      [49]        [4, 247, 309]      2
1117   x^3+x^2-372*x+2565       2      [7,7]       [7, 7, 42]         2
1213   x^3+x^2-404*x+629        2      [49]        [45, 313, 268]     2
1231   x^3+x^2-410*x-1003       2      [49]        [247, 73, 273]     2
1237   x^3+x^2-412*x+1741       2      [49]        [108, 336, 128]    2
1297   x^3+x^2-432*x-1345       2      [49]        [277, 62, 14]      2
1327   x^3+x^2-442*x-344        2      [49]        [217, 340, 251]    2
1381   x^3+x^2-460*x-1739       4      [343,7]     [1738, 2186, 2361] 4
1567   x^3+x^2-522*x-4759       2      [49]        [219, 137, 78]     2
(...)
2203   x^3+x^2-734*x+408        2      [7,7]       [28, 28, 35]       2
2251   x^3+x^2-750*x-1584       2      [49]        [191, 274, 151]    2
2557   x^3+x^2-852*x+9281       3      [49,7]      [235, 3, 286]      3
\end{verbatim}
\normalsize

\smallskip
For $f= 33199$, $P = x^3 + x^2 - 11066\,x + 238541$,
${\mathcal T}_K \simeq \Z/7\Z \times \Z/7\Z$, $h=14$, and
the annihilator is equivalent, modulo $1+ \sigma + \sigma^2$, 
to $A=\sigma-2$.

\smallskip
For $f= 20857$, $P=x^3 + x^2 - 6952\,x + 210115$,
${\mathcal T}_K \simeq \Z/7^2\Z \times \Z/7^2\Z$, $h=1$, and the
annihilator is equivalent to $A=7^2 (\sigma-3)$ where $\sigma-3$
is invertible modulo $7$. 

\smallskip
For $f= 1381$, ${\mathcal T}_K \simeq \Z/7^3\Z \times \Z/7\Z$,
$h=1$, $A= 1738 + 2186\,\sigma + 2361\, \sigma^2$ is equivalent 
to $7 \cdot (448 + 623\,\sigma)$ and
$448 + 623\,\sigma$ operates on ${\mathcal T}_K^7 \simeq \Z/7^2\Z$
as $\sigma - 18$ modulo $7^2$ where $18$ is of order $3$ modulo $7^2$
as expected.

\smallskip
For $f= 39679$, ${\mathcal T}_K \simeq \Z/7^3\Z \times \Z/7\Z \times \Z/7\Z$,
$h=7$, and one finds  the annihilator $A=7^2 (\sigma-4)$ where 
$\sigma-4$ is not invertible (${\rm N}_{\Q(j)/\Q}(j-4)=21$).

\medskip
For $p=13$, the same program gives the following similar results:

\footnotesize
\begin{verbatim}
f      P                     vptor    T_K          coefficients       nj
1033   x^3+x^2-344*x+1913      2      [169]        [311, 455, 919]    2
1459   x^3+x^2-486*x+2864      2      [13,13]      [101, 88, 153]     2
1483   x^3+x^2-494*x-2197      2      [169]        [911, 1868, 1628]  2
1543   x^3+x^2-514*x+4229      2      [169]        [1598, 603, 1866]  2
1747   x^3+x^2-582*x-4141      2      [169]        [1952, 505, 155]   2
3391   x^3+x^2-1130*x+14192    3      [169,13]     [803, 1765, 283]   3
4423   x^3+x^2-1474*x+10648    2      [169]        [52, 1213, 1888]   2
4933   x^3+x^2-1644*x-1827     2      [13,13]      [92, 79, 105]      2
5011   x^3+x^2-1670*x-4083     2      [169]        [602, 1673, 869]   2
5479   x^3+x^2-1826*x+13799    2      [13,13]      [93, 158, 28]      2
7321   x^3+x^2-2440*x-45824    2      [169]        [745, 409, 1546]   2
7963   x^3+x^2-2654*x+43944    2      [169]        [1805, 794, 860]   2
9319   x^3+x^2-3106*x-67649    2      [13,13]      [26, 52, 0]        2
\end{verbatim}
\normalsize

\section{Experiments and heuristics about the case $p=2$}

Conjecture \ref{conj} gives various possibilities of annihilation,
depending on the choice of ${\mathcal A}_{K,n}(c)$, ${\mathcal A}'_{K,n}(c)$ 
or else, and of the degree of $K/\Q$, odd, even, or a $2$th power.
We shall give some illustrations with quadratic, quartic and cubic fields.

\subsection{Quadratic fields}
Although the order of ${\mathcal T}_K$ is known
and given by $\frac{1}{2} L_2(1,\chi)$ (for $K\ne \Q(\sqrt 2)$), 
we give the computations
for the quadratic fields $K$ of conductor $f \geq 5$ with 
${\mathcal A}'_{K,n}(c)$ ($a \in [1, f_n/2]$) instead of 
${\mathcal A}_{K,n}(c)$ to test the conjecture; 
the computation of the Artin symbols is easily 
given by PARI with ${\sf kronecker(f,a)}=\pm1$.
The modulus $f_n = {\rm l.c.m.}\,(f_K, 4\cdot 2^n)$ is 
computed exactely and we take $n=e+2$.

\smallskip
From the annihilator $A' = a_0 + a_1 \cdot \sigma$ (in $({\sf L_0, L_1})$), 
we deduce, modulo the norm, an equivalent annihilator 
denoted by abuse $A' = a_1 - a_0 \in \Z$.

\smallskip
One finds $A' \equiv 2\cdot \order {\mathcal T}_K \pmod {2^{2+e}}$ 
for all $f \ne 8$ (only case with $K \cap \Q_\infty \ne \Q$) in this 
interval; then the class group is given
(be careful to take ${\sf nt}$ large enought for the computation 
of the structure of ${\mathcal T}_K$):

\footnotesize
\begin{verbatim}
{p=2;nt=18;bf=5;Bf=10^4;for(f=bf,Bf,v=valuation(f,2);M=f/2^v;
if(core(M)!=M,next);if((v==1||v>3)||(v==0 & Mod(M,4)!=1)||
(v==2 & Mod(M,4)==1),next);P=x^2-f;K=bnfinit(P,1);Kpn=bnrinit(K,p^nt);
C5=component(Kpn,5);Hpn0=component(C5,1);Hpn=component(C5,2);
h=component(component(component(K,8),1),2);L=List;ex=0;
i=component(matsize(Hpn),2);for(k=1,i-1,co=component(Hpn,i-k+1);
if(Mod(co,p)==0,val=valuation(co,p);if(val>ex,ex=val);
listinsert(L,p^val,1)));Hpn1=component(Hpn,1);
vptor=valuation(Hpn0/Hpn1,p);tor=p^vptor;S0=0;S1=0;w=valuation(f,p);
pN=p^2*p^ex;fn=pN*f/2^w;if(ex==0 & w==3,fn=p*fn);
for(cc=2,10^2,if(gcd(cc,p*f)!=1 || kronecker(f,cc)!=-1,next);c=cc;break);
for(a=1,fn/2,if(gcd(a,fn)!=1,next);asurc=lift(a*Mod(c,fn)^-1);
lambda=(asurc*c-a)/fn;u=Mod(lambda*a^-1,pN);
s=kronecker(f,a);if(s==1,S0=S0+u);if(s==-1,S1=S1+u));
L0=lift(S0);L1=lift(S1);A=L1-L0;if(A!=0,A=p^valuation(A,p));
print(f," P=",P," ",L0," ",L1," A=",A," tor=",tor," T_K=",L," Cl_K=",h))}

f_K=8     P=x^2-8     (1,0)        A'=1    tor=1     T_K=[]      Cl_K=[]
(...)
f_K=508   P=x^2-508   (223,479)    A'=256  tor=128   T_K=[128]   Cl_K=[]
(...)
f_K=1160  P=x^2-1160  (2,6)        A'=4    tor=2     T_K=[2]     Cl_K=[2,2]
f_K=1164  P=x^2-1164  (12,4)       A'=8    tor=4     T_K=[4]     Cl_K=[4]
(...)
f_K=1185  P=x^2-1185  (1640,1640)  A'=0    tor=1024  T_K=[2,512] Cl_K=[2]
f_K=1189  P=x^2-1189  (2,6)        A'=4    tor=2     T_K=[2]     Cl_K=[2]
(...)
f_K=1196  P=x^2-1196  (4,20)       A'=16   tor=8     T_K=[8]     Cl_K=[2]
f_K=1201  P=x^2-1201  (7752,3656)  A'=4096 tor=2048  T_K=[2048]  Cl_K=[]
(...)
f_K=1209  P=x^2-1209  (4,4)        A'=0    tor=4     T_K=[2,2]   Cl_K=[2]
(...)
f_K=1217  P=x^2-1217  (16,48)      A'=32   tor=16    T_K=[16]    Cl_K=[]
f_K=1221  P=x^2-1221  (8,8)        A'=0    tor=8     T_K=[2,4]   Cl_K=[4]
(...)
f_K=1596  P=x^2-1596  (16,16)      A'=0    tor=16    T_K=[8, 2]  Cl_K=[4,2]
\end{verbatim} 
\normalsize

\begin{remarks}
(i) For $f=1160$, one sees that $\order \Cl_K^\infty = \frac{1}{2}\order \Cl_K$
(indeed, $-1$ is norm in $K/\Q$, cf. \eqref{clinfty}).

\smallskip
(ii) It seems that for all the conductors, $A'$ is of the form 
$2^h\,(1+\sigma)$ up to a $2$-adic unit, where $h\geq 0$
takes any value and can exceed the exponent.

\smallskip
(iii) For $f$ prime, the annihilator of ${\mathcal T}_K$, given by the Theorem 
\ref{thmp}, or by any Solomon's type element, is related to its order:

\smallskip
\centerline{$\Frac{1}{2}\,L_2(1,\chi) \sim \Frac{1}{2}\,\sum_{a=1}^f
\chi(a)\cdot {\rm log}(1 - \zeta_f^a) = \Frac{1}{2} \cdot 
\big [{\rm log}(\eta_K^{}) - {\rm log}(\eta_K^\sigma) \big]$,}

\smallskip
where $\eta_K^{} = {\rm N}_{\Q^f/K}(1 - \zeta_f)$ (here
the character $\chi$ is primitive modulo $f$ since $K= k_\chi$).
The following program verifies (at least for these kind of prime conductors
with trivial class group) that we have $\eta_K^{} \cdot \varepsilon = 
\pm \sqrt{f}$, where $\varepsilon$ is the fundamental unit of $K$ 
or its inverse (the program gives in ${\sf N_0}$ and ${\sf N_1}$ the conjugates 
of $\eta_K^{}$ and gives $\varepsilon$ in ${\sf E}$):

\smallskip\footnotesize
\begin{verbatim}
{f=1201;N0=1;N1=1;X=exp(2*I*Pi/f);z=znprimroot(f);E=quadunit(f);zk=1;
for(k=1,(f-1)/2,zk=zk*z^2;N0=N0*(1-X^lift(zk));N1=N1*(1-X^lift(zk*z)));
print(N0*E," ",N1/E)}
\end{verbatim} 
\normalsize

We find $N_0\,\varepsilon = N_1\,\varepsilon^{-1} \approx
34.65544690 = \sqrt{1201}$, which implies that:
$$\Frac{1}{2}\,L_2(1,\chi) \sim \Frac{1}{2}\,(2\, {\rm log}(\varepsilon))
= {\rm log}(\varepsilon). $$

A direct computation gives ${\rm log}(\varepsilon) \sim 2^{12}$ as expected 
since $\order {\mathcal T}_K = 2^{11}$ with $\order {\mathcal R}_K \sim 2^{10}$
\cite[Proposition 5.2]{Gr2} and $\order {\mathcal W}_K= 2$ since $2$ splits in $K$.
Same kind of result with $f=1217$.
\end{remarks}

\subsection{A familly of cyclic quartic fields of composite conductor}

We consider a conductor $f$ product of two prime numbers $q_1$ and $q_2$
such that $q_1-1\equiv 2 \pmod 4$ and $q_2-1 \equiv 0 \pmod 8$. 
So there exists only one real cyclic quartic field $K$ of conductor $f$ which is
found eliminating the imaginary and non-cyclic fields;
the quadratic subfield of $K$ is $k = \Q(\sqrt{q_2})$. The program is written with 
${\mathcal A}'_{K,n}(c)$ and gives all information for $k$ and $K$.

\smallskip
The following result may help to precise the annihilations 
(see \cite[Theorem 2]{Gr7} or \cite[Theorem IV.3.3, Exercise IV.3.3.1]{Gr1}):

\begin{lemma} \label{inv}
Let $k$ be a totally real number field and let $K/k$ be a Galois 
$p$-extension with Galois group $g$ of order $p^r$. Then
we have the fixed point formula:
$\order {\mathcal T}_K^g =\order {\mathcal T}_k \cdot p^h$, where
(${\mathfrak l} \nmid p$ being the ramified primes in $K/k$):
$$h :=  {\rm min\,}(n^{}_0 + r\, ; \ldots, \nu^{}_{\mathfrak l}
+ \varphi^{}_{\mathfrak l} + \gamma^{}_{\mathfrak l}, \ldots) 
- (n^{}_0 + r) + \sm_{{\mathfrak l}\,\nmid\, p} e^{}_{\mathfrak l}, $$ 
with:

\smallskip
\qquad $p^{\nu_{\mathfrak l}} := $ $p$-part of $q^{-1}{\rm log}(\ell)$,
where ${\mathfrak l}\cap \Z =: \ell \Z$,

\qquad  $p^{\varphi_{\mathfrak l}} := $ $p$-part of the residue
degree of $\ell$ in $K/\Q$,

\qquad  $p^{\gamma_{\mathfrak l}} := $ $p$-part of the number
of prime ideals ${\mathfrak L} \mid {\mathfrak l}$ in $K/k$,

\qquad $p^{e_{\mathfrak l}} := $ $p$-part of the ramification
index of ${\mathfrak l}$ in $K/k$.
\end{lemma}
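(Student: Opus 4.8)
\textbf{Proof proposal for Lemma \ref{inv}.}

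The plan is to reduce the statement to the known fixed-point formula for $p$-ramification theory, which computes $\order {\mathcal T}_K^g$ via a cohomological/genus-theoretic argument relative to the subextension $K/k$. The starting point is the exact sequence describing ${\mathcal G}_{K,S}^{\rm ab}$ (equivalently ${\mathcal G}_K = {\rm Gal}(H_K^{\rm pr}/K)$) as a module over $g = {\rm Gal}(K/k)$. Since $K/k$ is a $p$-extension and Leopoldt's conjecture holds for abelian fields, one has ${\mathcal G}_K \simeq \Z_p \oplus {\mathcal T}_K$ with ${\mathcal T}_K$ finite, and similarly for $k$. Taking $g$-invariants and using that ${\rm H}^0(g, {\mathcal G}_K)$ surjects onto ${\mathcal G}_k$ with kernel controlled by ramification, one extracts a formula for $\order {\mathcal T}_K^g / \order {\mathcal T}_k$ as a power of $p$. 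This is exactly the content cited from \cite[Theorem 2]{Gr7} and \cite[Theorem IV.3.3]{Gr1}, so the ``proof'' is essentially an invocation of that result together with an identification of the local contributions.

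First I would set up the ambient extension: let $\widetilde K$ (resp. $\widetilde k$) be the maximal $S\cup\{\infty\}$-ramified pro-$p$-extension of $K$ (resp. $k$); since $K/k$ is $S$-ramified and a $p$-extension, $\widetilde k \subseteq \widetilde K$, and ${\rm Gal}(\widetilde K/k)$ is an extension of $g$ by ${\mathcal G}_{K,S}$. Abelianizing and passing to torsion, the relevant term is the genus module ${\mathcal T}_K^g$, and the transfer/restriction maps relate it to ${\mathcal T}_k$. Second, I would invoke the genus formula: the cokernel of ${\mathcal T}_k \to {\mathcal T}_K^g$ (or the appropriate norm map) has order $p^{h'}$ where $h'$ counts, via local class field theory, the ramification of the finite primes ${\mathfrak l}\nmid p$ together with a global defect term. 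Concretely the contribution of each ramified ${\mathfrak l}$ is its ramification index $e_{\mathfrak l}$ (the $p$-part), accounting for the $\sum_{{\mathfrak l}\nmid p} e_{\mathfrak l}$ term, while the ``$\min$ minus $(n_0+r)$'' part comes from the interaction between the cyclotomic $\Z_p$-extension direction (the $\Z_p$-free rank, governed by $n_0$ and the degree $p^r=\order g$) and the local splitting/decomposition data $\nu_{\mathfrak l}, \varphi_{\mathfrak l}, \gamma_{\mathfrak l}$ at each ${\mathfrak l}$. Third, I would verify that the quantities $p^{\nu_{\mathfrak l}}, p^{\varphi_{\mathfrak l}}, p^{\gamma_{\mathfrak l}}, p^{e_{\mathfrak l}}$ as defined (respectively the $p$-part of $q^{-1}\log \ell$, of the residue degree of $\ell$ in $K/\Q$, of the number of primes above ${\mathfrak l}$ in $K/k$, and of the ramification index in $K/k$) are precisely the invariants appearing in the cited theorem — this is a matter of matching notation and unwinding the definitions of the local norm indices and the behaviour of $\mu_{qp^n}$-type factors at ${\mathfrak l}$.

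The main obstacle is purely expository rather than mathematical: making the identification between the abstract genus-theoretic formula in \cite{Gr7}, \cite{Gr1} and the explicit arithmetic data $(\nu_{\mathfrak l}, \varphi_{\mathfrak l}, \gamma_{\mathfrak l}, e_{\mathfrak l})$ stated here, in particular checking that the $\min$ is taken over the correct index set (the single ``cyclotomic'' term $n_0 + r$ together with one term $\nu_{\mathfrak l} + \varphi_{\mathfrak l} + \gamma_{\mathfrak l}$ per ramified prime) and that the overall normalisation $- (n_0 + r) + \sum_{{\mathfrak l}\nmid p} e_{\mathfrak l}$ is correct. Since the lemma is explicitly flagged as a restatement of \cite[Theorem 2]{Gr7} and \cite[Theorem IV.3.3, Exercise IV.3.3.1]{Gr1}, I would simply cite those references for the derivation and limit the written proof to the dictionary between the two formulations, with a remark that when $k = \Q$ and $K/\Q$ is the quartic field under consideration the formula specialises to the quantity needed in the subsequent computations.
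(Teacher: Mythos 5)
Your proposal is consistent with what the paper itself does: the lemma is stated there without proof, purely as a citation of \cite[Theorem 2]{Gr7} and \cite[Theorem IV.3.3, Exercise IV.3.3.1]{Gr1}, which is exactly the route you take (invoke the fixed-point/genus formula from those references and reduce the written argument to a dictionary between notations). So your approach matches the paper's, and the sketch of the genus-theoretic background you add is a reasonable, if optional, supplement.
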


In such famillies of cyclic quartic fields, 
$h=\sm_{{\mathfrak l}\,\nmid\, p} e^{}_{\mathfrak l}$.

\subsubsection{The program}
In the present familly, $h=2$ (resp. $3$) if $q$ is inert (resp. splits) in $k/\Q$.

\smallskip
\footnotesize
\begin{verbatim}
{p=2;nt=18;forprime(qq=17,100,if(Mod(qq,8)!=1,next);Pk=x^2-qq;
k=bnfinit(Pk,1);kpn=bnrinit(k,p^nt);Hkpn=component(component(kpn,5),2);
Lk=List;i=component(matsize(Hkpn),2);
for(j=1,i-1,C=component(Hkpn,i-j+1);if(Mod(C,p)==0,
listinsert(Lk,p^valuation(C,p),1)));forprime(q=5,100,
if(valuation(q-1,2)!=2,next);f=q*qq;Q=polsubcyclo(f,4);
for(j=1,7,P=component(Q,j);K=bnfinit(P,1);C7=component(K,7);
S=component(C7,2);D=component(C7,3);
if(Mod(D,f)!=0 || S!=[4,0] || component(polgalois(P),2)!=-1,next);break); 
Cl=component(component(component(K,8),1),2);Kpn=bnrinit(K,p^nt);
C5=component(Kpn,5);Hpn0=component(C5,1);Hpn=component(C5,2);
Hpn=component(component(Kpn,5),2);L=List;ex=0;
i=component(matsize(Hpn),2);for(k=1,i-1,co=component(Hpn,i-k+1);
if(Mod(co,p)==0,val=valuation(co,p);if(val>ex,ex=val);
listinsert(L,p^val,1)));Hpn1=component(Hpn,1);
vptor=valuation(Hpn0/Hpn1,p);if(vptor>0,S0=0;S1=0;S2=0;S3=0;
pN=p^2*p^ex;fn=pN*f;dqq=(qq-1)/4;dq=(q-1)/2;
z=znprimroot(q);zz=znprimroot(qq);for(cc=3,f,if(gcd(cc,p*f)!=1,next);
cz=lift((cc*z)^dq);czz=lift((cc*zz)^dqq);if(cz!=1 || czz!=1,next);
c=cc;break);cm1=Mod(c,fn)^-1;for(a=1,fn/2,if(gcd(a,fn)!=1,next); 
asurc=lift(a*cm1);lambda=(asurc*c-a)/fn;u=Mod(lambda*a^-1,pN); 
aqq0=lift((a*zz^0)^dqq);aqq1=lift((a*zz^1)^dqq);
aqq2=lift((a*zz^2)^dqq);aqq3=lift((a*zz^3)^dqq);
aq0=lift((a*z^0)^dq);aq1=lift((a*z^1)^dq);
if(aqq0==1 & aq0==1,S0=S0+u);if(aqq0==1 & aq1==1,S2=S2+u);
if(aqq1==1 & aq0==1,S1=S1+u);if(aqq1==1 & aq1==1,S3=S3+u);
if(aqq2==1 & aq0==1,S2=S2+u);if(aqq2==1 & aq1==1,S0=S0+u);
if(aqq3==1 & aq0==1,S3=S3+u);if(aqq3==1 & aq1==1,S1=S1+u));
L0=lift(S0);L1=lift(S1);L2=lift(S2);L3=lift(S3);Y=Mod(y,y^2+1); 
ni=L0+Y*L1+Y^2*L2+Y^3*L3;Nni=valuation(norm(ni),2));V0=1;V1=1;V2=1;V3=1;
if(L0!=0,V0=2^valuation(L0,2));if(L1!=0,V1=2^valuation(L1,2));
if(L2!=0,V2=2^valuation(L2,2));if(L3!=0,V3=2^valuation(L3,2));
print();F=component(factor(f),1);
print("f=",F," Cl=",Cl," P=",P," tor=",2^vptor," Nni=",2^Nni);
print("A=",V0,"*",L0/V0," ",V1,"*",L1/V1," ",V2,"*",L2/V2," ",V3,"*",L3/V3);
print("q=",q," qq=",qq," T_k=",Lk," T_K=",L)))}

f=[5, 17]   h=[2] P=x^4-x^3-23*x^2+x+86 tor=16 Nni=16
  A=[2*5, 4*1, 2*1, 1*0]   q=5 qq=17  T_k=List([2])  T_K=[4, 2, 2]
f=[13, 17]   h=[2] P=x^4-x^3-57*x^2+x+664 tor=32 Nni=32
  A=[2*1, 2*1, 2*3, 2*3]   q=13 qq=17  T_k=[2]  T_K=[4, 4, 2]
f=[17, 29]   h=[2] P=x^4-x^3-125*x^2+x+3452 tor=16 Nni=16
  A=[4*3, 2*1, 1*0, 2*1]   q=29 qq=17  T_k=[2]  T_K=[4, 2, 2]
f=[17, 37]   h=[10] P=x^4-x^3-159*x^2+x+5662 tor=16 Nni=16
  A=[4*1, 2*3, 8*1, 2*7]   q=37 qq=17  T_k=[2]  T_K=[4, 2, 2]
f=[17, 53]   h=[2, 2] P=x^4-x^3-227*x^2+x+11714 tor=32 Nni=32
  A=[2*1, 2*5, 2*3, 2*7]   q=53 qq=17  T_k=[2]  T_K=[4, 4, 2]
f=[17, 61]   h=[2] P=x^4-x^3-261*x^2+x+15556 tor=16 Nni=16
  A=[2*1, 8*1, 2*5, 4*3]   q=61 qq=17  T_k=[2]  T_K=[4, 2, 2]
f=[5, 41]   h=[2] P=x^4-x^3-56*x^2-100*x+160 tor=256 Nni=32
  A=[2*13, 2*45, 2*59, 2*27]   q=5 qq=41  T_k=[16]  T_K=[32, 4, 2]
f=[13, 41]   h=[2] P=x^4-x^3-138*x^2-264*x+1472 tor=256 Nni=32
  A=[2*13, 2*27, 2*51, 2*5]   q=13 qq=41  T_k=[16]  T_K=[32, 4, 2]
f=[29, 41]   h=[2] P=x^4-x^3-302*x^2-592*x+8032 tor=1024 Nni=128
  A=[4*21, 4*5, 4*15, 4*15]   q=29 qq=41  T_k=[16]  T_K=[32, 8, 4]
f=[37, 41]   h=[2] P=x^4-x^3-384*x^2-756*x+13280 tor=256 Nni=32
  A=[2*57, 2*7, 2*47, 2*33]   q=37 qq=41  T_k=[16]  T_K=[32, 4, 2]
f=[41, 53]   h=[2] P=x^4-x^3-548*x^2-1084*x+27712 tor=512 Nni=64
  A=[4*23, 8*15, 4*5, 8*7]   q=53 qq=41  T_k=[16]  T_K=[32, 4, 4]
f=[41, 61]   h=[2, 2] P=x^4-x^3-630*x^2-1248*x+36896 tor=8192 Nni=1024
  A=[32*3, 16*7, 1*0, 16*7]   q=61 qq=41  T_k=[16]  T_K=[32, 16, 16]
f=[5, 73]   h=[2] P=x^4-x^3-100*x^2+187*x+1389 tor=8 Nni=8
  A=[1*5, 1*9, 1*15, 1*3]   q=5 qq=73  T_k=[2]  T_K=[4, 2]
f=[13, 73]   h=[2] P=x^4-x^3-246*x^2+479*x+11171 tor=8 Nni=8
  A=[1*7, 1*13, 1*13, 1*15]   q=13 qq=73  T_k=[2]  T_K=[4, 2]
f=[29, 73]   h=[2] P=x^4-x^3-538*x^2+1063*x+58767 tor=8 Nni=8
  A=[1*5, 1*7, 1*15, 1*5]   q=29 qq=73  T_k=[2]  T_K=[4, 2]
f=[37, 73]   h=[2] P=x^4-x^3-684*x^2+1355*x+96581 tor=128 Nni=128
  A=[1*0, 16*1, 8*1, 8*1]   q=37 qq=73  T_k=[2]  T_K=[8, 8, 2]
f=[53, 73]   h=[10] P=x^4-x^3-976*x^2+1939*x+200241 tor=8 Nni=8
  A=[1*15, 1*15, 1*5, 1*13]   q=53 qq=73  T_k=[2]  T_K=[4, 2]
f=[61, 73]   h=[2] P=x^4-x^3-1122*x^2+2231*x+266087 tor=16 Nni=16
  A=[8*1, 2*3, 1*0, 2*1]   q=61 qq=73  T_k=[2]  T_K=[4, 2, 2]
f=[5, 89]   h=[2, 2] P=x^4-x^3-122*x^2-217*x+1699 tor=16 Nni=16
  A=[1*0, 2*1, 8*1, 2*3]   q=5 qq=89  T_k=[2]  T_K=[4, 2, 2]
f=[13, 89]   h=[2] P=x^4-x^3-300*x^2-573*x+13625 tor=8 Nni=8
  A=[1*1, 1*7, 1*11, 1*13]   q=13 qq=89  T_k=[2]  T_K=[4, 2]
f=[29, 89]   h=[2] P=x^4-x^3-656*x^2-1285*x+71653 tor=8 Nni=8
  A=[1*11, 1*5, 1*1, 1*15]   q=29 qq=89  T_k=[2]  T_K=[4, 2]
f=[37, 89]   h=[2] P=x^4-x^3-834*x^2-1641*x+117755 tor=8 Nni=8
  A=[1*9, 1*15, 1*3, 1*5]   q=37 qq=89  T_k=[2]  T_K=[4, 2]
f=[53, 89]   h=[2] P=x^4-x^3-1190*x^2-2353*x+244135 tor=16 Nni=16
  A=[4*1, 2*5, 4*1, 2*7]   q=53 qq=89  T_k=[2]  T_K=[4, 2, 2]
f=[61, 89]   h=[2] P=x^4-x^3-1368*x^2-2709*x+324413 tor=8 Nni=8
  A=[1*1, 1*9, 1*11, 1*11]   q=61 qq=89  T_k=[2]  T_K=[4, 2]
f=[5, 97]   h=[2] P=x^4-x^3-133*x^2-479*x+36 tor=16 Nni=16
  A=[2*5, 8*1, 2*1, 4*3]   q=5 qq=97  T_k=[2]  T_K=[4, 2, 2]
f=[13, 97]   h=[10] P=x^4-x^3-327*x^2-1255*x+2558 tor=16 Nni=16
  A=[4*1, 2*7, 8*1, 2*3]   q=13 qq=97  T_k=[2]  T_K=[4, 2, 2]
f=[29, 97]   h=[2] P=x^4-x^3-715*x^2-2807*x+16914 tor=16 Nni=16
  A=[2*3, 8*1, 2*3, 4*3]   q=29 qq=97  T_k=[2]  T_K=[4, 2, 2]
f=[37, 97]   h=[2] P=x^4-x^3-909*x^2-3583*x+28748 tor=16 Nni=16
  A=[4*3, 2*7, 1*0, 2*3]   q=37 qq=97  T_k=[2]  T_K=[4, 2, 2]
f=[53, 97]   h=[2] P=x^4-x^3-1297*x^2-5135*x+61728 tor=64 Nni=64
  A=[8*3, 4*7, 16*1, 4*7]   q=53 qq=97  T_k=[2]  T_K=[8, 4, 2]
f=[61, 97]   h=[2] P=x^4-x^3-1491*x^2-5911*x+82874 tor=32 Nni=32
  A=[2*7, 2*5, 2*5, 2*7]   q=61 qq=97  T_k=[2]  T_K=[4, 4, 2]
\end{verbatim} 
\normalsize

\subsubsection{The case $f=5\cdot73$}
One may try to find a contradiction to Conjecture \ref{conj} 
with the ${\mathcal A}'_{K,n}(c)$ given by the above data.
One sees that $\frac{1}{2}{\mathcal A}'_{K,n}(c)$ 
is not always in $\Z[G_K]$, but modulo the norm 
we have an annihilator of the form $2\cdot {\mathcal A}''_{K,n}(c)$, 
and similarly we may ask under what condition 
${\mathcal A}''_{K,n}(c)$ annihilates ${\mathcal T}_K$. 

\smallskip
For $f=5\cdot73$, $P=x^4-x^3-100\,x^2+187\,x+1389$, for which
we have ${\mathcal T}_K \simeq \Z/4\Z \times \Z/2 \Z$, 
${\mathcal T}_k \simeq \Z/2 \Z$, ${\sf Cl}=2$,
${\mathcal A}'_{K,n}(c)= 5 +9\,\sigma + 15\,\sigma^2 + 3\, \sigma^3$,
giving:
$${\mathcal A}''_{K,n}(c) = \hbox{$\frac{1}{2}$} 
\big[5 +9\,\sigma + 15\,\sigma^2 + 3\, \sigma^3
- 3\,(1 +\sigma + \sigma^2 + \sigma^3)\big] \equiv 1 - \sigma + 2\,\sigma^2 \!\! \pmod 4$$
without obvious contradiction since $\order {\mathcal T}_K^g=8$
(i.e.,  ${\mathcal T}_K^g =  {\mathcal T}_K$)
and $\order {\mathcal T}_K^{G_K}=4$ (Lemma \ref{inv}).
Moreover, we deduce from this that ${\rm N}_{K/k}({\mathcal T}_K)={\mathcal T}_k$.

\subsection{Cyclic cubic fields of prime conductors}
The following program gives, for $p=2$ and for cyclic cubic fields of prime 
conductor ${\sf f}$, the group structure of ${\mathcal T}_K$ in ${\sf L}$
(from \cite[\S\,3.2]{Gr3}; recall that in all such
programs, the parameter ${\sf nt}$ must be large enough regarding the 
exponent of ${\mathcal T}_K$), then the (conjectural) annihilator 
${\mathcal A}'_{K,n}(c)$, reduced modulo $1+\sigma+\sigma^2$; 
it is equal, up to an invertible element, to a power of $2$ ($2$
is inert in $\Q(j)$):

\smallskip\footnotesize
\begin{verbatim}
{p=2;nt=12;forprime(f=10^4,2*10^4,if(Mod(f,3)!=1,next);P=polsubcyclo(f,3);
K=bnfinit(P,1);Kpn=bnrinit(K,p^nt);C5=component(Kpn,5);
Hpn0=component(C5,1);Hpn=component(C5,2);L=List;ex=0;
i=component(matsize(Hpn),2);for(k=1,i-1,co=component(Hpn,i-k+1);
if(Mod(co,p)==0,val=valuation(co,p);if(val>ex,ex=val);
listinsert(L,p^val,1)));Hpn1=component(Hpn,1);
vptor=valuation(Hpn0/Hpn1,p);if(vptor>2,S0=0;S1=0;S2=0;pN=p^2*p^ex;
D=(f-1)/3;fn=pN*f;z=znprimroot(f);zz=lift(z);t=lift(Mod((1-zz)/f,p));
c=zz+t*f;for(a=1,fn/2,if(gcd(a,fn)!=1,next);asurc=lift(a*Mod(c,fn)^-1);
lambda=(asurc*c-a)/fn;u=Mod(lambda*a^-1,pN);
a0=lift((a*z^0)^D);a1=lift((a*z^2)^D);a2=lift((a*z)^D);
if(a0==1,S0=S0+u);if(a1==1,S1=S1+u);if(a2==1,S2=S2+u));
L0=lift(S0);L1=lift(S1);L2=lift(S2);L1=L1-L0;L2=L2-L0;
A=gcd(L1,L2);A=2^valuation(A,2);print(f," ",P," ", A," ",L)))}

f       P                          A    L
10399   x^3+x^2-3466*x+7703        4    [4,4]
10513   x^3+x^2-3504*x-80989       8    [8,8]
10753   x^3+x^2-3584*x-76864       4    [4,4]
10771   x^3+x^2-3590*x-26728       4    [4,4]
10903   x^3+x^2-3634*x+26248       8    [8,8]
10939   x^3+x^2-3646*x-46592      16    [16,16]
10957   x^3+x^2-3652*x-39364       4    [4,4]
11149   x^3+x^2-3716*x+39228       4    [2,2,2,2]
(...)
12757   x^3+x^2-4252*x+103001      4    [4,4]
13267   x^3+x^2-4422*x+96800      16    [16,16]
13297   x^3+x^2-4432*x+94064       4    [4,4]
13309   x^3+x^2-4436*x+100064      4    [4,4]
13591   x^3+x^2-4530*x-63928       8    [8,8]
\end{verbatim}

\normalsize
\subsection{Cyclic quartic fields of prime conductors}

Let's give the same program for prime conductors
$f \equiv 1 \!\!\pmod 8$, with the annihilator ${\mathcal A}_{K,n}(c)$:

\smallskip\footnotesize
\begin{verbatim}
{p=2;nt=18;d=4;forprime(f=5,500,if(Mod(f,2*d)!=1,next);P=polsubcyclo(f,d);
K=bnfinit(P,1);Kpn=bnrinit(K,p^nt);C5=component(Kpn,5);Hpn0=component(C5,1);
Hpn=component(C5,2);L=List;ex=0;
i=component(matsize(Hpn),2);for(k=1,i-1,co=component(Hpn,i-k+1);
if(Mod(co,p)==0,val=valuation(co,p);if(val>ex,ex=val);
listinsert(L,p^val,1)));Hpn1=component(Hpn,1);
vptor=valuation(Hpn0/Hpn1,p);if(vptor>1,S0=0;S1=0;S2=0;S3=0;
pN=p^2*p^ex;D=(f-1)/d;fn=pN*f;z=znprimroot(f);zz=lift(z);
t=lift(Mod((1-zz)/f,p));c=zz+t*f;for(a=1,fn,if(gcd(a,fn)!=1,next);
asurc=lift(a*Mod(c,fn)^-1);lambda=(asurc*c-a)/fn;u=Mod(lambda*a^-1,pN);
a0=lift((a*z^0)^D);a1=lift((a*z^1)^D);a2=lift((a*z^2)^D);a3=lift((a*z^3)^D);
if(a0==1,S0=S0+u);if(a1==1,S1=S1+u);if(a2==1,S2=S2+u);if(a3==1,S3=S3+u));
L0=lift(S0);L1=lift(S1);L2=lift(S2);L3=lift(S3);Y=Mod(y,y^2+1);
ni=L0+Y*L1+Y^2*L2+Y^3*L3;Nni=valuation(norm(ni),2);
print(f," ",P," ",L0," ",L1," ",L2," ",L3," ",L," ",2^Nni)))}
\end{verbatim}

\normalsize
One gets the following examples (with ${\sf vptor}>1$ and where 
${\sf 2^{Nni}}$ is the norm of $L_0 - L_2 + (L_1- L_3) \sqrt{-1}$ with 
${\mathcal A}_{K,n}(c) = L_0+L_1 \sigma + L_2 \sigma^2+ L_3 \sigma^3$,
given in ${\sf A=[L0, L1, L2, L3]}$); then the list ${\sf L}$ gives the structure
of ${\mathcal T}_K$:
\footnotesize
\begin{verbatim}
f     P                             A                    L         2^Nni
17    x^4+x^3-6*x^2-x+1             [4, 6, 0, 6]         [4]       16
41    x^4+x^3-15*x^2+18*x-4         [90, 28, 102, 100]   [32]      16
73    x^4+x^3-27*x^2-41*x+2         [4, 4, 0, 0]         [2,2,2]   32
89    x^4+x^3-33*x^2+39*x+8         [4, 4, 0, 0]         [2,2,2]   32
97    x^4+x^3-36*x^2+91*x-61        [8, 10, 12, 2]       [4]       16
113   x^4+x^3-42*x^2-120*x-64       [16, 28, 8, 12]      [2,2,8]   64
137   x^4+x^3-51*x^2-214*x-236      [26, 8, 30, 16]      [16]      16
193   x^4+x^3-72*x^2-205*x-49       [6, 0, 14, 12]       [4]       16
233   x^4+x^3-87*x^2+335*x-314      [4, 0, 0, 4]         [2,2,2]   32
241   x^4+x^3-90*x^2-497*x-739      [6, 0, 6, 4]         [4]       16
257   x^4+x^3-96*x^2-16*x+256       [28, 20, 20, 60]     [2,4,16]  128
281   x^4+x^3-105*x^2+123*x+236     [4, 4, 0, 0]         [2,2,2]   32
313   x^4+x^3-117*x^2+450*x-324     [78, 12, 106, 108]   [32]      16
337   x^4+x^3-126*x^2+316*x+104     [28, 12, 28, 28]     [2,8,8]   256
353   x^4+x^3-132*x^2+684*x-928     [112, 60, 80, 68]    [2,2,32]  64
401   x^4+x^3-150*x^2-25*x+625      [14, 4, 6, 8]        [4]       16
409   x^4+x^3-153*x^2-230*x+548     [22, 8, 26, 24]      [8]       16
433   x^4+x^3-162*x^2+839*x-1003    [2, 4, 10, 0]        [4]       16
449   x^4+x^3-168*x^2-477*x+335     [10, 4, 10, 8]       [4]       16
457   x^4+x^3-171*x^2+1114*x-2044   [76, 10, 28, 30]     [32]      16
\end{verbatim}

\normalsize
\subsection{Detailed example of annihilation}

The case of the cyclic quartic field $K$ of conductor
$f=3433$ is particularly interesting:

\subsubsection{Numerical data}
We have $P=x^4 + x^3 - 1287\, x^2 - 12230 \,x + 3956$ and
${\mathcal T}_K \simeq \Z/2^7\Z$, knowing that the quadratic
subfield $k = \Q(\sqrt{3433})$ is such that ${\mathcal T}_k \simeq \Z/2^6\Z$:

\smallskip\footnotesize
\begin{verbatim}
{P=x^4+x^3-1287*x^2-12230*x+3956;K=bnfinit(P,1);p=2;nt=18;
Kpn=bnrinit(K,p^nt);Hpn=component(component(Kpn,5),2);L=List;
i=component(matsize(Hpn),2);for(k=1,i-1,c=component(Hpn,i-k+1);
if(Mod(c,p)==0,listinsert(L,p^valuation(c,p),1)));print("Structure: ",L)}
Structure: List([128])

{P=x^2-3433;K=bnfinit(P,1);p=2;nt=18;Kpn=bnrinit(K,p^nt);
Hpn=component(component(Kpn,5),2);L=List;i=component(matsize(Hpn),2);
for(k=1,i-1,c=component(Hpn,i-k+1);if(Mod(c,p)==0,
listinsert(L,p^valuation(c,p),1)));print("Structure: ",L)}
Structure: List([64])
\end{verbatim}

\normalsize \smallskip
The class group of $K$ is trivial and its three fundamental units are:

\footnotesize
\begin{verbatim}
[227193/338*x^3-6613325/338*x^2-93274465/338*x+14925255/169,
34349/169*x^3+1388772/169*x^2+10559389/169*x-3491425/169,
70276336974818125/338*x^3-677429229869394661/338*x^2
       -83238272983560888143/338*x+13065197272033438434/169]
\end{verbatim}

\normalsize
\subsubsection{Annihilation from ${\mathcal A}_{K,n}(c)$}
We have computed ${\mathcal A}_{K,n}(c)$ and obtained:
$${\mathcal A}_{K,n}(c) =:A_K \equiv 
 8 \cdot 13+ 2 \cdot 21\, \sigma + 16 \cdot 7 \,\sigma^2
+ 2 \cdot 23 \,\sigma^3 \pmod {2^7}. $$

Let $h$ be a group generator of ${\mathcal T}_K$ (order $2^7$) and let $h_0$ 
be a generator of ${\mathcal T}_k$ (order $2^6$); it is easy to prove that 
one may suppose $h^2=j_{K/k}(h_0)$ (injectivity of the transfer map $j_{K/k}$) 
and $h_0^{\sigma^2}=h_0$. We put $j_{K/k}(h_0) =: h_0$ for simplicity.
Then it follows that 
$$h^{A_K}=h_0^{4\cdot 13+ 21 \, \sigma + 8 \cdot 7 \, \sigma^2+ 23 \, \sigma^3}=1. $$

Since $h_0^{\sigma^2} = h_0$, we obtain
$h^{A_K}=h_0^{(4 \cdot 13+ 8 \cdot 7)+ (21 + 23) \,\ov \sigma}=
h_0^{4 \cdot 27+ 4\cdot 11 \,\ov \sigma}=1$;
giving, modulo the norm $1+\ov \sigma$,
$h_0^{4\cdot(27-11)} = h_0^{2^6}=1$, as expected.

\subsubsection{Annihilation from ${\mathcal A}'_{K,n}(c)$}
There is (by accident ?) no numerical obstruction for an annihilation
by $A'_K := {\mathcal A}'_{K,n}(c)$, with the same program
replacing ``${\sf for(a=1,fn,... )}$'' by ``${\sf for(a=1,fn/2,... )}$''.
Then it follows that the program gives
$h^{A'_K}=h^{4 \cdot 13+ 21 \, \sigma + 
8 \cdot 15 \, \sigma^2+ 23 \, \sigma^3}=1$.
Since the restriction of $A'_K$ to $k$ is $A'_k$ (no Euler factors), we get:
$$h_0^{A'_k}=h_0^{4 \cdot 13 + 8 \cdot 15
+ (21 + 23)  \cdot \ov \sigma} = 
h_0^{4\cdot 43+4\cdot 11\, \ov \sigma}=1$$

which is equivalent, modulo the norm, to the annihilation by
$4\cdot 43 - 4\cdot 11= 2^7$ for a cyclic group of order $2^6$.

\smallskip
Now we may return to the annihilation of $h$; since $h^{1+\sigma^2}
\in j_{K/k}({\mathcal T}_k)$ we put $h^{1+\sigma^2}= h_0^t$.
Then, with $u=13$, $v=21$, $w=15$, $z=23$, we have:
\begin{equation*}
\begin{aligned}
h^{4u+v \sigma +8 w \sigma^2+z \sigma^3} &=
h_0^{2 u +4 w \sigma^2} h^{(v+z \sigma^2) \sigma} \\
&=h_0^{2 u +4 w +23 \, t \,\sigma} h^{(v-z) \sigma}
=h_0^{2 \cdot 43 + 23\, t\, \sigma} h^{-2\, \sigma} \\
&= h_0^{2 \cdot 43 + (23\, t -1)\,\sigma} =
 h_0^{2 \cdot 43 - 23\, t +1} = h_0^{87- 23\, t}=1
\end{aligned}
\end{equation*}
 
 so necessarily $87- 23 \, t \equiv 0 \pmod {2^6}$, giving
$t \equiv 1\pmod {2^6}$. So we can write:
$$h^{1+\sigma^2}=j_{K/k}(h_0). $$

\subsubsection{Direct study of the $G_K$-module structure of
${\mathcal T}_K$}
We consider ${\mathcal T}_K$ only given with the following information:
$h$ is a group generator such that $h^2=h_0$, a generator of 
$j_{K/k}({\mathcal T}_K)$; $h^\sigma = h^x$, $x\in \Z/2^7\Z$,
whence $h_0^\sigma = h_0^x = h_0^{-1}$ giving $x \equiv -1 \pmod{2^6}$.
The relation $h^{\sigma^2+1}=h^{x^2+1}=h^2=h_0$ gives again $t=1$
in the previous notation $h^{\sigma^2+1}=h_0^t$.
Moreover, $h^{\sigma^2-1}=h^{x^2-1}=1$, which is in 
accordance with Lemma \ref{inv} and gives 
${\mathcal T}_K^g = {\mathcal T}_K$.

\smallskip
If we take into account these theoretical informations for the 
``annihilators'' $A_K$ and $A'_K$ we find no contradiction, but
we do not know if $x=-1$ or $x=-1+2^6$ (modulo $2^7$).
The prime $2$ splits in $k$, is inert in $K/k$ and the class number
of $K$ is $1$; so we have ${\mathcal W}_K \simeq {\mathcal W}_k \simeq \Z/2\Z$
and ${\mathcal T}_K = {\rm tor}_{\Z_2}^{} \big(U_K \big / \ov E_K \big)$;
then the result about $x$ depends on the exact sequence \eqref{exseq}:
$$1\to \Z/2\Z  \too {\mathcal T}_K \simeq \Z/2^7\Z
 \mathop {\tooo}^{{\rm log}}  {\rm tor}_{\Z_2}^{}\big({\rm log}\big 
(U_K \big) \big / {\rm log} (\ov E_K) \big) =: {\mathcal R}_K \to 0, $$

knowing the units and then the structure of the regulator ${\mathcal R}_K$.

\subsubsection{About the case $f_K=233$} \label{555}
The field $K$ is defined by the polynomial $P=x^4+x^3-87\,x^2+335\,x-314$
for which ${\mathcal T}_K \simeq (\Z/2\Z)^3$ and ${\mathcal T}_k \simeq \Z/2\Z$.

\smallskip
In this case an annihilator is  $A_K= 4\cdot (1 + \sigma^3)$,
which shows that $A'_K= 2 \cdot (1 + \sigma^3)$ is also suitable.
Then $A''_K = \frac{1}{2} A'_K$ should be equivalent to $1-\sigma$.

\smallskip
Since $2$ splits completely in $K$, we have ${\mathcal T}_K = {\mathcal W}_K
\simeq (\Z/2\Z)^3$ and in the same way, ${\mathcal T}_k = {\mathcal W}_k \simeq \Z/2\Z$, 
for which the Galois structures are well-known: in particular, $1-\sigma$ 
{\it does not annihilate} ${\mathcal T}_K$ (the class of $(1,-1,1,-1)$ is invariant).
Another proof: use Lemma \ref{inv} giving here $\order {\mathcal T}_K^{G_K}=2$.

\section{$p$-adic measures and annihilations}

To establish (in Section \ref{section9}) a link with the values of 
$p$-adic $L$-functions,
$L_p(s, \chi)$, at $s=1$, we shall refer to \cite[Section II]{Gr6} 
using the point of view of explicit $p$-adic measures 
(from pseudo-measures in the sense of \cite{Se}) 
with a Mellin transform for the construction of $L_p(s, \chi)$
and the application to some properties of the $\lambda$ invariants 
of Iwasawa's theory. 

\smallskip
But since we only need the value $L_p(1,\chi)$, instead of $L_p(s,\chi)$,
for $s \in \Z_p$, we can simplify the general setting, using a similar 
computation of ${\mathcal S}_{L_n}(c)^*$, directly in $\Z[G_n]$,
given by Oriat in \cite[Proposition 3.5]{O}.

\subsection{Definition of ${\mathcal A}_{L_n}$ 
and ${\mathcal A}_{L_n}(c)$} \label{defAc}

Let $n \geq n_0+e$, where ${\mathcal T}_K^{p^e}=1$,
and put $\varphi_n := \varphi(q p^n) = (p-1)\cdot p^n$
if $p\ne 2$, $\varphi_n = 2^{n+1}$ otherwise. 

\smallskip
We consider (where $c$ is odd and prime to $f_n$ and where 
$a$ runs trough the integers in $[1,f_n]$, prime to $f_n$):
\begin{equation} \label{defA}
{\mathcal A}_{L_n}\!\! :=\! \Frac{-1}{f_n \varphi_n} 
\sm_a a^{\varphi_n} \Big(\Frac{L_n}{a} \Big)  \ \ \& \ \  {\mathcal A}_{L_n}(c)\! := \!
\Big[1-c^{\varphi_n} \Big(\Frac{L_n}{c} \Big)\Big] {\mathcal A}_{L_n} . 
\end{equation}

For now, these elements, or more precisely their restrictions to $K$,
are not to be confused with the restrictions
${\mathcal A}_{K,n}(c)$ of $ {\mathcal S}_{L_n}(c)^*$ defined in \S\,\ref{slnc},
even we shall prove that they are indeed equal; but such an 
expression is more directly associated to $L_p$-functions. Then:
\begin{equation*}
\begin{aligned}
{\mathcal A}_{L_n}(c) &=
\Big[1-c^{\varphi_n} \Big(\Frac{L_n}{c} \Big)\Big ]  \Frac{-1}{f_n \varphi_n} 
\sm_a a^{\varphi_n} \Big(\Frac{L_n}{a} \Big)  \\
&  \ \wt{\equiv}\  \Frac{-1}{f_n \varphi_n} \Big[ \sm_a a^{\varphi_n} \Big(\Frac{L_n}{a} \Big)
- \sm_a a^{\varphi_n} c^{\varphi_n} 
\Big(\Frac{L_n}{a } \Big) \Big(\Frac{L_n}{ c} \Big)\Big ] \\
& \hbox{\ (in the same way, use $a'_{c}$ such that} \\
&\hbox{\hspace{2.3cm}$a'_{c} \cdot  c \equiv a  \!\! \pmod {f_n}$, $1 \leq a'_{c} \leq f_n$)} \\
&  \ \wt{\equiv}\  \Frac{-1}{f_n \varphi_n} \Big[ \sm_a a^{\varphi_n} \Big(\Frac{L_n}{a} \Big)
- \sm_a a'_{c}{}^{\varphi_n} c^{\varphi_n} 
\Big(\Frac{L_n}{a'_{c}} \Big) \Big(\Frac{L_n}{ c} \Big)\Big ]  \\
&  \ \wt{\equiv}\   \Frac{1}{f_n \varphi_n}  
\sm_a \Big[(a'_{c} \cdot  c)^{\varphi_n} - a^{\varphi_n}\Big]
\Big(\Frac{L_n}{a} \Big) .
\end{aligned}
\end{equation*}

\begin{lemma} 
We have $(a'_{c} \cdot  c)^{\varphi_n} - a^{\varphi_n} \equiv 0 \pmod{f_n \varphi_n}$.
\end{lemma}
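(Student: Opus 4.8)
The plan is to reduce the claim to an elementary congruence on integers.  Write $a'_c \cdot c = a + \lambda^n_a(c)\, f_n$ with $\lambda := \lambda^n_a(c) \in \Z$, exactly as in Lemma~\ref{sprime}.  Then
\[
(a'_c \cdot c)^{\varphi_n} - a^{\varphi_n} = (a + \lambda f_n)^{\varphi_n} - a^{\varphi_n},
\]
and expanding by the binomial theorem gives $(a+\lambda f_n)^{\varphi_n} - a^{\varphi_n} = \varphi_n\, a^{\varphi_n-1}\lambda f_n + \big(\text{terms divisible by } f_n^2\big)$.  So modulo $f_n \varphi_n$ the whole expression collapses to $\varphi_n a^{\varphi_n-1}\lambda f_n$, which is obviously $\equiv 0 \pmod{f_n\varphi_n}$, and the higher terms are $\equiv 0 \pmod{f_n^2}$, a fortiori $\pmod{f_n\varphi_n}$ once one checks $\varphi_n \mid f_n$.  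That last divisibility holds because $f_n = f_{L_n} = {\rm l.c.m.}(f_K, qp^n)$ is divisible by $qp^n$, and $\varphi_n = \varphi(qp^n)$ divides $qp^n$ for $p$ odd (indeed $\varphi(qp^n) = (p-1)p^n \mid p^{n+1} \mid qp^n$) and for $p=2$ ($\varphi_n = 2^{n+1}$, $qp^n = 2^{n+2}$).

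First I would make the binomial expansion explicit only as far as needed: $(a+\lambda f_n)^{\varphi_n} = a^{\varphi_n} + \binom{\varphi_n}{1} a^{\varphi_n-1}\lambda f_n + \sum_{k\ge 2}\binom{\varphi_n}{k} a^{\varphi_n-k}(\lambda f_n)^k$.  The first corrective term is $\varphi_n a^{\varphi_n-1}\lambda f_n$, divisible by $f_n\varphi_n$ on the nose; every term with $k\ge 2$ carries a factor $f_n^2$.  Hence
\[
(a'_c \cdot c)^{\varphi_n} - a^{\varphi_n} \equiv 0 \pmod{f_n \varphi_n}
\]
provided $f_n^2 \equiv 0 \pmod{f_n\varphi_n}$, i.e. provided $\varphi_n \mid f_n$, which I verify as above using Lemma~\ref{conductor}.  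This shows the coefficients $\tfrac{1}{f_n\varphi_n}\big[(a'_c c)^{\varphi_n} - a^{\varphi_n}\big]$ appearing in the last displayed formula for ${\mathcal A}_{L_n}(c)$ are genuine integers, so ${\mathcal A}_{L_n}(c) \in \Z[G_n]$.

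The only genuine point requiring a word of care — and it is mild — is the divisibility $\varphi_n \mid f_n$, since the binomial argument alone would give divisibility by $\gcd(f_n\varphi_n, f_n^2) = f_n \gcd(\varphi_n, f_n)$, not automatically by $f_n\varphi_n$.  This is where Lemma~\ref{conductor} is used: $qp^n \mid f_{L_n} = f_n$, and a direct check of the two cases $p$ odd / $p=2$ gives $\varphi(qp^n) \mid qp^n$, hence $\varphi_n \mid f_n$.  (If one wished to avoid even this, one could instead observe that it suffices to have the weaker conclusion modulo the exponent $p^e$ of ${\mathcal T}_K$, but the integrality statement as phrased is cleanest via $\varphi_n \mid f_n$.)  Everything else is the routine binomial expansion, so there is no real obstacle here; the lemma is a bookkeeping step ensuring the subsequent identification of ${\mathcal A}_{L_n}(c)$ with ${\mathcal S}_{L_n}(c)^*$ takes place in the integral group ring rather than merely in $\Q[G_n]$.
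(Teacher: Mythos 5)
Your argument has a genuine gap at its load-bearing step, the divisibility $\varphi_n \mid f_n$. Your verification reads ``$(p-1)p^n \mid p^{n+1} \mid qp^n$'', but for odd $p$ the first divisibility would force $p-1 \mid p$, which fails for every $p\geq 3$; it is only for $p=2$ that $\varphi_n=2^{n+1}$ divides $qp^n=2^{n+2}$. Nor can the claim be rescued: the prime-to-$p$ part of $f_n={\rm l.c.m.}(f_K,qp^n)$ is that of $f_K$, which has no reason to contain $p-1$ (take $f_K=7$, $p=5$). Worse, the strengthened conclusion you draw from it --- divisibility in $\Z$, hence integrality of the coefficients $\frac{1}{f_n\varphi_n}\big[(a'_c c)^{\varphi_n}-a^{\varphi_n}\big]$ and ${\mathcal A}_{L_n}(c)\in\Z[G_n]$ --- is simply false in general: with $p=5$, $f_K=7$, $n=0$ one has $f_n=35$, $\varphi_n=4$, and for $a=1$, $c=3$ one gets $a'_c=12$, $a'_c c=36$, and $36^4-1=1679615\equiv 35\not\equiv 0\pmod{140}$. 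So no proof along your lines can exist; the congruence of the lemma must be read $p$-adically (modulo $f_n\varphi_n\Z_p$), consistently with the convention $\ \wt{\equiv}\ $ used throughout this section, where the factor $p-1$ (and $2$, and the prime-to-$p$ part of $f_n$) is a unit.

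Once read $p$-adically, your binomial expansion does the job without any appeal to $\varphi_n\mid f_n$, and this is essentially what the paper does: the $k=1$ term $\varphi_n a^{\varphi_n-1}\lambda^n_a(c) f_n$ is exactly divisible by $f_n\varphi_n$, while each term with $k\geq 2$ has $p$-valuation at least $k\,v_p(f_n)\geq 2(n+1)$, which exceeds $v_p(f_n\varphi_n)=v_p(f_n)+n$; only valuations at $p$ matter. The paper's proof moreover extracts the information actually needed later, namely the value of the quotient: dividing by $f_n\varphi_n$ and using $a^{\varphi_n}\equiv 1\pmod{qp^n}$ gives $\frac{(a'_c c)^{\varphi_n}-a^{\varphi_n}}{f_n\varphi_n}\equiv \lambda^n_a(c)\,a^{-1}\pmod{p^{n+1}}$, with the extra term $\frac{1}{2}\lambda^n_a(c) f_n$ to be watched when $p=2$; this evaluation, not integrality over $\Z$, is what makes the identification ${\mathcal A}_{L_n}(c)\ \wt{\equiv}\ {\mathcal S}_{L_n}(c)^*$ in \eqref{aln} work. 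Your closing parenthetical (working modulo the exponent $p^e$, i.e.\ $p$-adically) points toward the correct reading, but as written the proof both asserts a false divisibility and targets a false integral statement.
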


\begin{proof}
By definition, $a'_{c} \cdot  c = a +\lambda^n_a(c) f_n$ with $\lambda^n_a(c) \in \Z$.
Consider:
\begin{equation*}
\begin{aligned}
A & := \frac{(a'_{c} \cdot  c)^{\varphi_n} - a^{\varphi_n}} {f_n \varphi_n} \\
&=\frac{ [a^{\varphi_n} + \lambda^n_a(c) f_n \varphi_n a^{\varphi_n-1} +
\lambda^n_a(c){}^2 f_n^2 \frac{\varphi_n(\varphi_n-1)}{2} 
a^{\varphi_n-2}+ \cdots] - a^{\varphi_n}}{f_n \varphi_n} \\
&\equiv \lambda^n_a(c) a^{\varphi_n-1} + \lambda^n_a(c){}^2 f_n 
\Frac{(\varphi_n-1)}{2} a^{\varphi_n-2}  \\
&\equiv  \lambda^n_a(c) a^{\varphi_n-1} 
\equiv \lambda^n_a(c) a^{-1} \!\!\! \pmod {p^{n+1}}, \\ 
\hbox{since}\  a^{\varphi_n}& \equiv 1 \pmod {qp^n}.
\end{aligned}
\end{equation*}

When $p=2$, one must take into account the term 
$\lambda^n_a(c) f_n \Frac{\varphi_n-1}{2} a^{\varphi_n-2} \sim
\Frac{1}{2}\lambda^n_a(c) f_n$, in which case the congruence is 
with the modulus $p^{n+1}$ (which is sufficient since for $n \geq n_0+e$,
this modulus annihilates ${\mathcal T}_K$ for any $p$).
\end{proof}

We have obtained for all $n \geq n_0+e$:
\begin{equation}\label{aln}
{\mathcal A}_{L_n}(c)  \ \wt{\equiv}\  \sm_{a=1}^{f_n}  \lambda^n_a(c) 
\cdot a^{-1}  \Big(\Frac{L_n}{a} \Big)\ \wt \equiv \ {\mathcal S}_{L_n}(c)^*,
\end{equation}
thus giving again, by restriction to $K$, the
annihilator ${\mathcal A}_{K,n}(c) \in \Z_p[G_K]$ of ${\mathcal T}_K$
such that (for all $n \geq n_0+e$)
${\mathcal A}_{K,n}(c)  \ \wt{\equiv}\  \sm_{a=1}^{f_n} \lambda^n_a(c) 
\, a^{-1} \Big(\Frac{K}{a} \Big)$.

\subsection{Normic properties of the ${\mathcal A}_{L_n}$ -- Euler factors}

\begin{theorem} \cite[Proposition II.2\,(iv)]{Gr6}.\label{thmfond}
Let $K$ be of conductor $f = m \ell$ where $m$ is the
conductor of a subfield $k$ of $K$ and where $\ell \ne p$ is a prime 
number. For $n \geq n_0$, let $L_n:= K(\mu_{qp^n})$ and 
the analogous field $l_n$ for $k$, of conductors $f_n$ and $m_n$,
repectively; recall that $\varphi_n = \varphi(q p^n)$. 

Let ${\mathcal A}_{L_n} := \Frac{-1}{f_n \varphi_n} 
\sm_a^{f_n} a^{\varphi_n} \Big(\Frac{L_n}{a} \Big)$
and ${\mathcal A}_{l_n} := \Frac{-1}{m_n \varphi_n} 
\sm_b^{m_n} b^{\varphi_n} \Big(\Frac{l_n}{b} \Big)$.
Then:
$${\rm N}_{L_n/l_n} ({\mathcal A}_{L_n}) 
 \ \wt{\equiv}\  \Big(1- \ell^{\varphi_n} \Frac{1}{\ell}
\Big(\Frac{l_n}{\ell} \Big)\Big){\mathcal A}_{l_n}, \ \ \hbox{resp.,}
\ \  {\rm N}_{L_n/l_n} ({\mathcal A}_{L_n}) 
 \ \wt{\equiv}\  {\mathcal A}_{l_n}, $$ 
if $\ell \nmid m$, resp., $\ell \mid m$
(congruences modulo $p^{n+1}\Z_p[G_n] + (1 - s_\infty)\Z_p[G_n]$).
\end{theorem}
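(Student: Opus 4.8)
The plan is to transcribe, into the multiplicative-exponent setting, the inductive norm computation performed above for the Stickelberger elements, isolating the single prime $\ell$ by which the conductors $f$ and $m$ differ. First I would fix the conductor arithmetic: since $\ell\neq p$ and $\ell\nmid q$, one has $f_n = \ell\,m_n$ in both cases, with $v_p(m_n)\geq v_p(qp^n)$ (equal to $n+1$ for odd $p$ and to $n+2$ for $p=2$, once $n$ is large enough that $v_p(f)\leq v_p(qp^n)$), and with $\ell\nmid m_n$ precisely when $\ell\nmid m$ — the case in which $\ell$ is unramified in $l_n$ and carries a Frobenius $\big(\frac{l_n}{\ell}\big)$.

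Next I would write each $a\in[1,f_n]$ prime to $f_n$ as $a = b + \lambda\,m_n$ with $b\in[1,m_n]$ prime to $m_n$ and $\lambda\in[0,\ell-1]$: when $\ell\mid m_n$ every $\lambda$ occurs, whereas when $\ell\nmid m_n$ one excludes the unique $\lambda=\lambda_b^\ast$ for which $a = b'_\ell\cdot\ell$ with $b'_\ell\in[1,m_n]$ prime to $m_n$, exactly as in \S\,\ref{norms}. Since $\big(\frac{L_n}{a}\big)$ restricts to $\big(\frac{l_n}{b}\big)$, applying ${\rm N}_{L_n/l_n}$ yields
$${\rm N}_{L_n/l_n}({\mathcal A}_{L_n}) = \frac{-1}{f_n\varphi_n}\sum_b \Big(\frac{l_n}{b}\Big)\Big[\sum_{\lambda=0}^{\ell-1}(b+\lambda m_n)^{\varphi_n} - \epsilon\cdot(b'_\ell\cdot\ell)^{\varphi_n}\Big],$$
with $\epsilon=1$ if $\ell\nmid m$ and $\epsilon=0$ otherwise. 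The heart of the matter is the binomial expansion
$$\sum_{\lambda=0}^{\ell-1}(b+\lambda m_n)^{\varphi_n} = \ell\,b^{\varphi_n} + \varphi_n\,m_n\,\frac{\ell(\ell-1)}{2}\,b^{\varphi_n-1} + \sum_{j\geq 2}\binom{\varphi_n}{j}m_n^{\,j}b^{\varphi_n-j}\Big(\sum_{\lambda=0}^{\ell-1}\lambda^j\Big),$$
which one then divides by $f_n\varphi_n = \ell\,m_n\varphi_n$ term by term. Writing $\binom{\varphi_n}{j}=\frac{\varphi_n}{j}\binom{\varphi_n-1}{j-1}$ and using $v_p(m_n)\geq n+1$ (and $=n+2$ for $p=2$) together with $v_p(\ell)=0$, every $j\geq 2$ term becomes $\equiv 0\pmod{p^{n+1}}$, the delicate one being $j=2$, which survives with valuation exactly $n+1$ when $p=2$.

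The leading term now yields exactly ${\mathcal A}_{l_n}$; the linear term yields $\frac{\ell-1}{2}\sum_b\big(\frac{l_n}{b}\big)b^{\varphi_n-1}$, and since $b^{\varphi_n}\equiv 1\pmod{qp^n}$ gives $b^{\varphi_n-1}\equiv b^{-1}$, pairing $b$ with $m_n-b$ and using $\big(\frac{l_n}{-1}\big)=s_\infty$ shows this term lies in $(1-s_\infty)\Z_p[G_n]+p^{n+1}\Z_p[G_n]$, hence is absorbed into the relation $\wt\equiv$ (the factor $\frac{\ell-1}{2}$ is harmless: for $\ell=2$ one has $p$ odd, so $2$ is a $p$-adic unit). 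Finally, when $\ell\nmid m$ the identity $\big(\frac{l_n}{b}\big)=\big(\frac{l_n}{b'_\ell}\big)\big(\frac{l_n}{\ell}\big)$ and the bijection $b\mapsto b'_\ell$ on residues prime to $m_n$ turn the remaining $\epsilon$-term into $-\ell^{\varphi_n}\frac1\ell\big(\frac{l_n}{\ell}\big){\mathcal A}_{l_n}$, producing the announced Euler factor, while for $\ell\mid m$ that term is absent and one obtains ${\mathcal A}_{l_n}$ directly. The main obstacle I anticipate is precisely this $p$-adic valuation bookkeeping for $p=2$ (where $q=4$, $\varphi_n=2^{n+1}$) — verifying that the $j=2$ contribution and the linear term are both killed modulo $p^{n+1}$ and modulo the $(1-s_\infty)$-part — together with honoring the ``by abuse'' conductor convention of \S\,\ref{norms} that allows $\ell=2\nmid m$; the remainder is a formal copy of the additive Stickelberger computation carried out above.
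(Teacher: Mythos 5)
Your proof is correct and follows essentially the same route as the paper: the same decomposition $a=b+\lambda\,m_n$ with the exceptional residue $\lambda_b^*$, the same first-order binomial congruence modulo $m_n\varphi_n p^{n+1}$ (with the same $j=2$ care when $p=2$), absorption of the symmetric linear term into $(1-s_\infty)\Z_p[G_n]+p^{n+1}\Z_p[G_n]$, and the Euler factor produced by the bijection $b\mapsto b'_\ell$ together with $\big(\frac{l_n}{b}\big)=\big(\frac{l_n}{b'_\ell}\big)\big(\frac{l_n}{\ell}\big)$. The only difference is organizational and harmless: you subtract the excluded term $(b'_\ell\cdot\ell)^{\varphi_n}$ whole and evaluate it exactly, whereas the paper sums over $\lambda\ne\lambda_b^*$, uses $\sum_{\lambda\ne\lambda_b^*}\lambda=\frac{\ell(\ell-1)}{2}-\lambda_b^*$, and identifies $\sum_b\lambda_b^*\,b^{-1}\big(\frac{l_n}{b}\big)$ with ${\mathcal A}_{l_n}(\ell)$ via \eqref{aln}.
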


\begin{proof} Suppose first that $\ell \nmid m$, so 
$f_n=l m_n$.\,\footnote{For $\ell=2$ and $m$ odd, $f=2m$ is not 
a conductor stricto sensu, but the following computations 
are exact and necessary with the modulus
$m_n$ and $f_n = 2\,m_n$; then if $f=2^k\cdot m$
($m$ odd, $k\geq 2$), the second case of the theorem applies and 
shall give the Euler factor 
$\big(1-2^{\varphi_n}\frac{1}{2} \big(\frac{l_n}{2}\big) \big)  \ \wt{\equiv}\ 
\big(1-\frac{1}{2} \big(\frac{l_n}{2}\big) \big)$.
If $p \mid f$ and $p \nmid m$, there is no Euler factor for $p$
since $m_n$ and $f_n$ are divisible by $p$; in other words, these
computations and the forthcoming ones are, by nature, not ``primitive'' at $p$.}
Put $a = b + \lambda\, m_n$, $\lambda  \in [0, \ell-1]$,
$b \in [1, m_n]$ prime to $m_n$;
since $a \in [1, f_n]$ is prime to $f_n$, $b$ is prime to $m_n$
and $\lambda  \ne \lambda_b^*$ such that $b + \lambda_b^* m_n 
=:  b'_\ell \cdot \ell$, $b'_\ell \in \Z$.
Thus $a^{\varphi_n}=( b + \lambda \, m_n)^{\varphi_n} 
\equiv  b^{\varphi_n} + b^{\varphi_n-1} \lambda\, m_n \varphi_n
\pmod {m_n \varphi_n p^{n+1}}$. Then:
\begin{equation*}
\begin{aligned}
{\rm N}_{L_n/l_n} ({\mathcal A}_{L_n}) & \ \wt{\equiv}\ 
\Frac{-1}{\ell m_n \varphi_n}\ \cdot  \sm_{b,\, \lambda \ne \lambda_b^*}
\Big[ b^{\varphi_n} + b^{\varphi_n-1} \lambda\, m_n \varphi_n \Big] \Big(\Frac{l_n}{b} \Big) \\
& \ \wt{\equiv}\  \Frac{-(\ell-1)}{\ell m_n \varphi_n}\sm_{b} b^{\varphi_n} \Big(\Frac{l_n}{b} \Big)
- \Frac{1}{\ell} \sm_{b,\, \lambda \ne \lambda_b^*} 
b^{\varphi_n-1} \lambda \Big(\Frac{l_n}{b} \Big) \\
&  \ \wt{\equiv}\  \Big(1-\Frac{1}{\ell}\Big) {\mathcal A}_{l_n}
- \Frac{1}{\ell}  \sm_{b,\, \lambda \ne \lambda_b^*} 
b^{\varphi_n-1} \lambda  \Big(\Frac{l_n}{b} \Big)\\
&  \ \wt{\equiv}\   \Big(1-\Frac{1}{\ell}\Big) {\mathcal A}_{l_n}
-\Frac{1}{\ell}  \sm_{b} b^{\varphi_n-1} \Big(\Frac{l_n}{b} \Big)
\Big( \sm_{\lambda \ne \lambda_b^*}  \lambda \Big)  \\
&  \ \wt{\equiv}\   \Big(1-\Frac{1}{\ell}\Big) {\mathcal A}_{l_n}
-\Frac{1}{\ell}  \sm_{b} b^{\varphi_n-1}  \Big(\Frac{l_n}{b} \Big)
\Big( \Frac{\ell (\ell-1)}{2} -  \lambda_b^* \Big).
\end{aligned}
\end{equation*}

We remark that $\lambda_b^* = \lambda_b^n(\ell)$ is relative to the writing
$b'_\ell \cdot \ell = b  + \lambda_b^n(\ell) \, m_n$ 
and that $b^{\varphi_n-1} \equiv b^{-1} \pmod {p^{n+1}}$, whence
using $\sum_{b} b^{-1} \big(\frac{l_n}{b} \big)  \ \wt{\equiv}\  0$:
$${\rm N}_{L_n/l_n} ({\mathcal A}_{L_n}) 
 \ \wt{\equiv}\  \Big(1-\Frac{1}{\ell}\Big) {\mathcal A}_{l_n}
 + \Frac{1}{\ell}  \sm_{b}\lambda_b^* \cdot b^{-1}  \Big(\Frac{l_n}{b} \Big).$$

But as we know (see relations \ref{defA} and \eqref{aln}),
$\sm_{b} \lambda_b^*\, b^{-1}  \Big(\Frac{l_n}{b} \Big)
 \ \wt{\equiv}\  {\mathcal A}_{l_n}(\ell)$; so
${\rm N}_{L_n/l_n}  ({\mathcal A}_{L_n}) 
 \ \wt{\equiv}\  \Big(1-\Frac{1}{\ell}\Big) {\mathcal A}_{l_n} 
 +  \Frac{1}{\ell}  {\mathcal A}_{l_n}(\ell)$:
since 
${\mathcal A}_{l_n}(\ell) \ \wt\equiv\  \Big(1- \ell^{\varphi_n} 
\Big(\Frac{l_n}{\ell} \Big)\Big)
{\mathcal A}_{l_n}$, we get
${\rm N}_{L_n/l_n}  ({\mathcal A}_{L_n}) 
\ \wt \equiv \  \Big(1- \ell^{\varphi_n} \Frac{1}{\ell}
 \Big(\Frac{l_n}{\ell} \Big) \Big){\mathcal A}_{l_n}$.

\medskip
The case $\ell \mid m$ is obtained more easily from
the same computations.
\end{proof}

Of course, for all $h \geq 0$ we get:
$${\rm N}_{L_{n+h}/L_n} ({\mathcal A}_{L_{n+h}})
\ \wt{\equiv}\  {\mathcal A}_{L_n}, $$ 
which expresses the coherence of the family $\big( {\mathcal A}_{L_n}\big)_n$
in the cyclotomic tower.

\begin{corollary} \label{coro} (i)
Let $K/k$ be an extension of fields of conductors 
$f_K$ and $f_k$, respectively.
Multiplying by $\Big[ 1 - c^{\varphi_n} \Big(\Frac{l_n}{c} \Big) \Big]
= {\rm N}_{L_n/l_n} \Big[1 - c^{\varphi_n} \Big(\Frac{L_n}{c} \Big) \Big]$
to get elements in the algebras $(\Z/p^{n+1}\Z)[{\rm Gal}(L_n/\Q)]$ 
and $(\Z/p^{n+1}\Z)[{\rm Gal}(l_n/\Q)]$, one obtains
$\ {\rm N}_{L_n/l_n} ({\mathcal A}_{L_n}(c))  \ \wt{\equiv}\ 
\prd_{\ell \mid f_K,\, \ell \nmid pf_k} 
\Big(1-\Frac{1}{\ell}  \Big(\Frac{l_n}{\ell} \Big) \Big){\mathcal A}_{l_n}(c)$.

\smallskip
(ii) Let ${\mathcal A}_{K,n}(c)$ and ${\mathcal A}_{k,n}(c)$ be the 
restrictions of ${\mathcal A}_{L_n}(c)$ and ${\mathcal A}_{l_n}(c)$
to $K$ and $k$, respectively; then
${\rm N}_{K/k} ({\mathcal A}_{K,n}(c))  \ \wt{\equiv}\ 
\prd_{\ell \mid f_K,\, \ell \nmid pf_k}\!\! \Big(1-\Frac{1}{\ell} \Big(\Frac{k}{\ell} \Big) \Big)
\cdot {\mathcal A}_{k,n}(c)$.

\smallskip
(iii) The family $({\mathcal A}_{K,n})_n = ({\rm N}_{L_n/K}({\mathcal A}_{L_n}))_n$ 
defines a pseudo-measure denoted ${\mathcal A}_K$ by abuse, such that 
the measure $({\mathcal A}_{K,n}(c))_n$ defines the element 
${\mathcal A}_{K}(c)=\Big (1 - \Big(\Frac{K}{c} \Big)\Big)
\cdot {\mathcal A}_{K} \in \Z_p[G_K]$ and gives the main formula:
$${\rm N}_{K/k} ({\mathcal A}_{K}(c)) \ \wt{\equiv}\ 
\prd_{\ell \mid f_K,\, \ell \nmid pf_k}\!\! \Big(1-\Frac{1}{\ell} \Big(\Frac{k}{\ell} \Big) \Big)
\cdot {\mathcal A}_{k}(c).$$
\end{corollary}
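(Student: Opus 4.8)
The plan is to obtain the corollary by iterating Theorem~\ref{thmfond}, which treats only conductors differing by a single prime. The point needing care is that a general sub-extension $K/k$ need not be refinable by intermediate fields whose conductors grow one prime at a time --- for instance a cyclic $K/\Q$ of prime degree and conductor $\ell_1\ell_2$ has no subfield of conductor $\ell_1$ or $\ell_2$ --- so I would route the computation through the full cyclotomic fields $\Q(\mu_{f_n})$ and $\Q(\mu_{m_n})$, writing $f_n$, $m_n$ for the conductors of $L_n$, $l_n$; there a chain of fields whose conductor increases by one prime per step is always at hand.

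For (i): from \eqref{defA} and the fact that restriction of Artin symbols commutes with the defining sum, the restriction (``norm'') $\Z_p[{\rm Gal}(\Q(\mu_{f_n})/\Q)]\to\Z_p[{\rm Gal}(L_n/\Q)]$ sends ${\mathcal A}_{\Q(\mu_{f_n})}$ to ${\mathcal A}_{L_n}$, and likewise over $k$; since $l_n\subseteq\Q(\mu_{m_n})\subseteq\Q(\mu_{f_n})$, transitivity of norms gives ${\rm N}_{L_n/l_n}({\mathcal A}_{L_n}) = {\rm N}_{\Q(\mu_{m_n})/l_n}\big({\rm N}_{\Q(\mu_{f_n})/\Q(\mu_{m_n})}({\mathcal A}_{\Q(\mu_{f_n})})\big)$. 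I would then fix a chain of cyclotomic fields $\Q(\mu_{m_n})=C_0\subset C_1\subset\cdots\subset C_s=\Q(\mu_{f_n})$ with ${\rm cond}(C_{i+1})=\ell\cdot{\rm cond}(C_i)$, $\ell$ a prime (and $\ell\ne p$ for $n$ large enough, since then $f_n$ and $m_n$ have the same $p$-part), and apply Theorem~\ref{thmfond} at each step --- legitimate because $qp^n$ divides every ${\rm cond}(C_i)$, so $C_i(\mu_{qp^n})=C_i$: a step contributes the factor $1-\ell^{\varphi_n}\Frac{1}{\ell}\big(\Frac{C_i}{\ell}\big)$ when $\ell\nmid{\rm cond}(C_i)$ and nothing when $\ell\mid{\rm cond}(C_i)$. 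As ${\rm N}_{\Q(\mu_{m_n})/l_n}$ is a ring homomorphism, composing all steps and descending to $l_n$ turns each surviving factor into $1-\ell^{\varphi_n}\Frac{1}{\ell}\big(\Frac{l_n}{\ell}\big)\ \wt{\equiv}\ 1-\Frac{1}{\ell}\big(\Frac{l_n}{\ell}\big)$ (using $\ell^{\varphi_n}\equiv1\pmod{qp^n}$), the surviving primes being exactly those dividing $f_n$ but not $m_n$, i.e.\ dividing $f_K$ but not $pf_k$. This proves ${\rm N}_{L_n/l_n}({\mathcal A}_{L_n})\ \wt{\equiv}\ \prod_{\ell\mid f_K,\,\ell\nmid pf_k}\big(1-\Frac{1}{\ell}\big(\Frac{l_n}{\ell}\big)\big)\,{\mathcal A}_{l_n}$. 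Multiplying both sides by $1-c^{\varphi_n}\big(\Frac{l_n}{c}\big)={\rm N}_{L_n/l_n}\big(1-c^{\varphi_n}\big(\Frac{L_n}{c}\big)\big)$ turns the left side into ${\rm N}_{L_n/l_n}({\mathcal A}_{L_n}(c))$ by multiplicativity of the norm, and the right side into $\prod(\cdots)\,{\mathcal A}_{l_n}(c)$ since the multiplicator commutes with the Euler factors in the commutative group algebra; by \S\ref{defAc} both are now integral, which is (i). One checks, exactly as in the proof of Theorem~\ref{thmfond}, that $\ \wt{\equiv}\ $ is stable under restriction maps and under multiplication by group-algebra elements, so the neglected terms never interfere.

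For (ii): the restriction $L_n\to k$ factors both as $L_n\to K\to k$ and as $L_n\to l_n\to k$ (note $k\subseteq l_n$), so ${\rm N}_{K/k}({\mathcal A}_{K,n}(c))$ is the image of ${\rm N}_{L_n/l_n}({\mathcal A}_{L_n}(c))$ under $\Z_p[{\rm Gal}(l_n/\Q)]\to\Z_p[G_k]$; applying this to the identity of (i) and noting that each $\ell\nmid pf_k$ is unramified in $l_n$, so $\big(\Frac{l_n}{\ell}\big)$ restricts to $\big(\Frac{k}{\ell}\big)$, yields (ii). For (iii): the coherence ${\rm N}_{L_{n+h}/L_n}({\mathcal A}_{L_{n+h}})\ \wt{\equiv}\ {\mathcal A}_{L_n}$ noted after Theorem~\ref{thmfond} shows $({\mathcal A}_{K,n})_n$ is compatible modulo the neglected subspace, i.e.\ a pseudo-measure ${\mathcal A}_K$, while the multiplicator clears the denominators, so $({\mathcal A}_{K,n}(c))_n$ is a genuine measure, giving ${\mathcal A}_K(c)=\big(1-\big(\Frac{K}{c}\big)\big){\mathcal A}_K\in\Z_p[G_K]$; passing to the limit over $n$ in (ii), with the Euler factors $1-\Frac{1}{\ell}\big(\Frac{k}{\ell}\big)$ independent of $n$, gives the main formula. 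The main obstacle is the one flagged at the start: Theorem~\ref{thmfond} is a single-prime statement and $K/k$ need not admit an intermediate tower with conductors incrementing by single primes, which forces the detour through $\Q(\mu_{f_n})$ and $\Q(\mu_{m_n})$; once that reduction is made, the rest is multiplicativity of norms and of Euler factors plus routine $\ \wt{\equiv}\ $ bookkeeping.
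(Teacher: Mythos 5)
Your proposal is correct and takes essentially the approach the paper intends: the corollary is the multiplicative iteration of Theorem \ref{thmfond} one prime at a time, carried out at the level of the cyclotomic fields $\Q(\mu_{m_n})\subset\cdots\subset\Q(\mu_{f_n})$ and then restricted to $l_n$, to $K$ and $k$, and passed to the limit with the multiplicator $1-c^{\varphi_n}\big(\Frac{L_n}{c}\big)$ --- which is exactly how the paper handles the analogous norm formulas in \S\,\ref{norms} (including the footnote's treatment of the degenerate modulus $2m$ step, which your chain silently crosses). Your opening observation that $K/k$ itself need not contain intermediate fields whose conductors grow one prime at a time is a genuine point the paper leaves implicit, and your detour through the full cyclotomic fields resolves it in the paper's own spirit, so there is no gap.
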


\begin{remarks}\label{rema4.10}
 (i) In a numerical point of view, we only need a minimal value of $n$, and
we shall write (e.g., for $n=e$ when $K \cap \Q_\infty = \Q$):
$${\mathcal A}_{K,e}(c) \ \wt\equiv \ 
\sum_{\sigma \in G_K} \!\!\! \Big[\ \sm_{a, \,(\frac{K}{a})=\sigma}
\lambda^e_a(c)\, a^{-1}\  \Big] \cdot \sigma =: 
\sm_{\sigma \in G_K} \Lambda^e_\sigma(c) \cdot \sigma.$$

Then the next step shall be to interprete the limit, $\Lambda_\sigma(c)$,
of the coefficients $\Lambda^n_\sigma(c) = \sum_{a,\, (\frac{K}{a})=\sigma}
\,\lambda^n_a(c)\, a^{-1}$, for $n \to \infty$, giving an equivalent annihilator, 
 but with a more canonical interpretation. 

\smallskip
(ii) In \cite{Gr5,Gr6,O,Sol1,Th,Ts,B-N,Ng2,Sol2,N-N, All1,All2,B-M}, 
some limits are expressed by means of $p$-adic logarithms of 
cyclotomic numbers/units of $\Q^f$ as expressions 
of the values at $s=1$ of the $p$-adic $L$-functions of~$K$
(for instance, in \cite[Theorem 2.1]{Ts} a link between 
Stickelberger elements and cyclotomic units is given following Iwasawa and
Coleman).
But these results are obtained with various non-comparable techniques;
this will be discussed later.

\smallskip
(iii) In the relation ${\mathcal A}_K(c) := 
\Big[1- \Big(\Frac{K}{c} \Big)\Big] {\mathcal A}_K$,
the choice of $c$ must be such that the integers $1- \chi(c)$ 
be of minimal $p$-adic valuation for the characters $\chi$ of $K$.
But $1-\chi(c)$ is invertible if and only if $\chi(c)$
is not a root of unity of $p$-power order.
\end{remarks}

\section{Remarks about Solomon's annihilators}
We shall give two examples: one giving the same annihilator as our's, and
another giving a Solomon annihilator in part degenerated, contrary to 
${\mathcal A}_K(c)$.

\subsection{Cubic field of conductor $1381$ and 
Solomon's $\Psi_K$}\label{1381}
We have (see the previous table of \S\,\ref{cubiccase}) 
$P=x^3 + x^2 - 460x - 1739$ and the classical program gives 
the class number in {\sf h}, the group structure of 
${\mathcal T}_K$ (in {\sf L}) and the units in {\sf E}:

\smallskip\footnotesize
\begin{verbatim}
{P=x^3+x^2-460*x-1739;K=bnfinit(P,1);p=7;nt=8;Kpn=bnrinit(K,p^nt);r=1;
Hpn=component(component(Kpn,5),2);C8=component(K,8);E=component(C8,5);
h=component(component(C8,1),1);L=List;i=component(matsize(Hpn),2);
for(k=1,i-1,c=component(Hpn,i-k+1);if(Mod(c,p)==0,
listinsert(L,p^valuation(c,p),1)));print(L);print("h=",h," ",L," E=",E)}

h=1    List([343, 7])
E=[245/13*x^2-4606/13*x-21522/13, 147/13*x^2+3479/13*x+11272/13]
\end{verbatim}

\smallskip\normalsize
So, the class group is trivial, ${\mathcal T}_K={\mathcal R}_K 
\simeq \Z/7^3\Z \times \Z/7\Z$
and the cyclotomic units are the fundamental units.
Then we shall use a definition of the automorphism $\sigma$
to define the Galois operation on the units:

\smallskip\footnotesize
\begin{verbatim}
{P=x^3 + x^2 - 460*x - 1739;print(nfgaloisconj(P))}
[x, -1/13*x^2 - 2/13*x + 302/13,  1/13*x^2 - 11/13*x - 315/13]
\end{verbatim}

\smallskip\normalsize
From $\varepsilon=\frac{245}{13} x^2 - \frac{4606}{13}x - \frac{21522}{13}$ 
and $\sigma : x \mapsto -\frac{1}{13}x^2 - \frac{2}{13}x + \frac{302}{13}$, one gets:

\footnotesize
\begin{verbatim}
Mod(245/13*(-1/13*x^2 - 2/13*x + 302/13)^2 - 
4606/13*(-1/13*x^2 - 2/13*x + 302/13) - 21522/13,P)=
Mod(147/13*x^2 + 3479/13*x + 11259/13, x^3 + x^2 - 460*x - 1739)
\end{verbatim}

\normalsize
which is $\varepsilon^\sigma$ and the units are, on the $\Q$-base
$\{1, x, x^2\}$:

\medskip
$\varepsilon = \varepsilon_1 =\frac{245}{13}x^2 - \frac{4606}{13}x - \frac{21522}{13}$,

$\varepsilon^\sigma= \varepsilon_2 =\frac{147}{13}x^2 + \frac{3479}{13}x + \frac{11259}{13}$,

$\varepsilon^{\sigma^2}= \varepsilon_3 = -\frac{392}{13}x^2 + \frac{1127}{13}x + \frac{175948}{13}$.

\smallskip
The second unit given by PARI is $\frac{147}{13}x^2 + \frac{3479}{13}x + \frac{11272}{13}
= - \varepsilon^{-\sigma^2}$. The order of $\varepsilon$ modulo $p=7$ is $114$.
We compute $A_i:= \varepsilon_i^{114}$ modulo $7^6$, $i=1,2,3$), then $L_i := A_i - 1$:

\footnotesize
\begin{verbatim}
{P=x^3+x^2-460*x-1739;
E1=Mod(245/13*x^2-4606/13*x-21522/13,P+Mod(0,7^6));
E2=Mod(147/13*x^2+3479/13*x+11259/13,P+Mod(0,7^6));
E3=Mod(-392/13*x^2+1127/13*x+175948/13,P+Mod(0,7^6));
L1=E1^114-1;L2=E2^114-1;L3=E3^114-1;
print(lift(L1)," ",lift(L2)," ",lift(L3))}
\end{verbatim}

\normalsize
$L_1=17542x^2+48608 x+81879=7^2 (358 x^2+992 x+1671)=7^2 \alpha_1$,

\smallskip
$L_2=62867 x^2+833 x+33761=7^2 (1283 x^2+17 x+689)=7^2 \alpha_2$,

\smallskip
$L_3=37240 x^2+68208 x+2009=7^2 (760 x^2+1392 x+41)=7^2 \alpha_3$,

\smallskip
giving $\Frac{1}{7} {\rm log} (\varepsilon_i) \equiv 7\alpha_i 
-\Frac{1}{2}7^3\alpha_i^2 \pmod {7^4}$:

\smallskip
$\Frac{1}{7} {\rm log} (\varepsilon) \equiv 791x^2 + 2142x + 378
=7(113x^2 + 306x + 54) \pmod{7^4}$,

$\Frac{1}{7} {\rm log} (\varepsilon^\sigma) \equiv 2121x^2 + 119x + 364
=7(303x^2 + 17x + 52) \pmod{7^4}$,

$\Frac{1}{7} {\rm log} (\varepsilon^{\sigma^2}) \equiv 1890x^2 + 140x + 1659
=7(270x^2 + 20x + 237) \pmod{7^4}$.

\smallskip
So, the Solomon annihilator $\Frac{1}{p} \sm_{\sigma \in G_K} 
{\rm log}(\varepsilon^\sigma) \cdot \sigma^{-1}$
of ${\mathcal T}_K$ is (modulo $7^3$ and
up to a $7$-adic unit):
$$\Psi_K \equiv 7 \cdot \big [15\,x^2 + 12\,x + 5 + (9\,x^2 + 17x + 3)\sigma^{-1}
+(25\,x^2 + 20\,x + 41) \sigma^{-2} \big ].$$

Since the norm is a trivial annihilator, we can replace $\Psi_K$ by
\begin{align*}
\Psi'_K & =\Psi_K - 7 \cdot(15\,x^2 + 12\,x + 5)(1+\sigma^{-1}+\sigma^{-2})  \\
& \equiv 7\cdot \big[ (43\,x^2 +5\,x +47)\, \sigma^{-1} +  
(10\,x^2 +8\,x + 36) \big ]\, \sigma^{-2} \pmod{7^3}. 
\end{align*}

Then, $43\,x^2 +5\,x +47$ is invertible $p$-adically (its norm is
prime to $7$) which gives the equivalent annihilator:
$$ 7\cdot \big[\sigma + (10\,x^2 +8\,x + 36)�\cdot (43\,x^2 +5\,x +47)^{-1} 
\equiv \sigma + 31 \pmod {7^2} \big]$$

equivalent to the annihilator defined by $ 7\cdot (\sigma - 18)$ modulo $7^3$.

\medskip
Our annihilator, given by the previous table,
is $1738+ 2186\,\sigma^{-1} +2361\,\sigma^{-2}$ equivalent to
$448 +623\, \sigma \equiv 7 \cdot (\sigma - 18) \pmod {7^3}$.

\smallskip
So $\sigma-18$ is an annihilator for the submodule 
${\mathcal T}_K^7 \simeq \Z/7^2 \Z$,
which is coherent since $18$ is of order 3 modulo $7^3$.

\smallskip
The perfect identity of the two results shows that no information has been lost 
for this particular case, whatever the method (but in the case of cyclic fields
of prime degree, there is not any Euler factor).

\subsection{Cyclic quartic field of conductor 
$37 \cdot 45161$ and Solomon's $\Psi_K$}\label{quartic}

Let $K$ be a real cyclic quartic field of conductor $f$ such that the quadratic 
subfield $k$ has conductor $m \mid f$, with for instance $f = \ell\, m$,
$\ell$ prime split in $k/\Q$. We take $p\equiv 1 \pmod 4$, $p \nmid f$.

\smallskip
Put $ \eta^{}_f:=1 - \zeta_f$,  $ \eta^{}_m:=1 - \zeta_m$,
$\eta^{}_K := {\rm N}_{\Q^f/K} (\eta^{}_f)$,
$\eta^{}_k := {\rm N}_{\Q^m/k} ( \eta^{}_m)$.

\smallskip
Then we have the Solomon annihilator:
$$\Psi_K = \Frac{1}{p} \sm_{\sigma \in G_K} 
{\rm log}( \eta_K^\sigma) \cdot \sigma^{-1}. $$

Since, from the formula \eqref{uc} (which applies since $m\ne 1$), one has
${\rm N}_{\Q^f/\Q^m} ( \eta^{}_f)=
 \eta_m^{(1-(\frac{\Q^m}{\ell})^{-1})}$, i.e., 
${\rm N}_{K/k} ( \eta^{}_K) = \eta_k^{(1-(\frac{k}{\ell})^{-1})} =1$,
we get (with $G_K = \{1, \sigma, \sigma^2, \sigma^3\}$):
\begin{equation*}
\begin{aligned}
\Psi_K & = \Frac{1}{p} \big( {\rm log}( \eta^{}_K)+ {\rm log}( \eta_K^\sigma) \cdot \sigma^{-1}
+ {\rm log}(\eta_K^{\sigma^2}) \cdot \sigma^{-2} + {\rm log}(\eta_K^{\sigma^3}) 
\cdot \sigma^{-3} \big) \\
& = \Frac{1}{p} \big( {\rm log}( \eta^{}_K)+ {\rm log}(\eta_K^\sigma) \cdot \sigma^{-1} 
- {\rm log}( \eta^{}_K) \cdot \sigma^{-2} - {\rm log}(\eta_K^{\sigma})
\cdot \sigma^{-3} \big) 
\end{aligned}
\end{equation*}

So, in this particular situation, one has:
\begin{equation}\label{psiK}
\Psi_K  = \Frac{1}{p} \big( {\rm log}( \eta^{}_K) +  
{\rm log}(\eta_K^\sigma)\cdot \sigma^{-1} \big) \cdot (1 - \sigma^2).
\end{equation}

Suppose that ${\mathcal T}_K$ is equal to the transfer of ${\mathcal T}_k$
(many examples are available), then ${\mathcal T}_K$ is annihilated by
$(1 - \sigma^2)$, whatever the structure of 
${\mathcal T}_k \simeq {\mathcal T}_K$; but one 
expects annihilators $A_K$ such that ${\rm N}_{K/k}(A_K)=A_k$
be a non-trivial annihilator of ${\mathcal T}_k$.

For instance, define $K$ by $x=\sqrt{\ell \sqrt m \Frac{\sqrt m + a}{2}}$
where $m=a^2+b^2$, $b=2 b'$. This gives the polynomial
$P=x^4 - \ell m \,x^2 +\ell^2 m b'{}^2$.
The following program gives many examples with non-trivial ${\mathcal T}_k$
(with $m$ prime, $p=5$):

\footnotesize\smallskip
\begin{verbatim}
{p=5;forprime(m=1,10^5,if(Mod(m,20)!=1,next);P=x^2-m;K=bnfinit(P,1);nt=12;
Kpn=bnrinit(K,p^nt);Hpn=component(component(Kpn,5),2);L=List;
i=component(matsize(Hpn),2);R=0;for(k=1,i-1,c=component(Hpn,i-k+1);
if(Mod(c,p)==0,R=R+1;listinsert(L,p^valuation(c,p),1)));if(R>0,
print("m=",m," structure",L)))}
\end{verbatim}

\normalsize \smallskip
For $m= 45161$, one obtains ${\mathcal T}_k \simeq \Z/5^5 \Z$; then
$a=205$, $b'=28$. Now we find some primes $\ell$ with the following program:

\footnotesize \smallskip
\begin{verbatim}
{p=5;m=45161;bprim=28;forprime(ell=7,10^3,if(Mod(ell,4)!=1,next);
if(kronecker(m,ell)!=1,next);P=x^4-ell*m*x^2+ell^2*m*bprim^2;
K=bnfinit(P,1);nt=12;Kpn=bnrinit(K,p^nt);Hpn=component(component(Kpn,5),2);
L=List;i=component(matsize(Hpn),2);
for(k=1,i-1,c=component(Hpn,i-k+1);if(Mod(c,p)==0,
listinsert(L,p^valuation(c,p),1)));
print("ell=",ell," m=",m," P=",P," structure",L))}
\end{verbatim}

\normalsize \smallskip
giving the following examples (for which ${\mathcal T}_k$ is 
a direct factor in ${\mathcal T}_K$):

\footnotesize 
\begin{verbatim}
ell=13  P=x^4-587093*x^2+5983651856       structure [3125]
ell=17  P=x^4-767737*x^2+10232398736      structure [3125,5,5]
ell=37  P=x^4-1670957*x^2+48471120656     structure [3125]
ell=997 P=x^4-45025517*x^2+35194105312016 structure [3125,25]
\end{verbatim}

\normalsize
We consider the case $\ell=37$, $P=x^4-1670957 x^2+48471120656$
for wich PARI gives the following information that may be used by the reader:

\footnotesize
\begin{verbatim}
nfgaloisconj(x^4-1670957*x^2+48471120656)=
[-x, x, -1/212380*x^3 + 43593/5740*x, 1/212380*x^3 - 43593/5740*x]

{P=x^4-1670957*x^2+48471120656;K=bnfinit(P,1);p=5;nt=8;Kpn=bnrinit(K,p^nt);
r=1; Hpn=component(component(Kpn,5),2);C8=component(K,8);E=component(C8,5);
h=component(component(C8,1),1);L=List;i=component(matsize(Hpn),2);R=0;
for(k=1,i-1,c=component(Hpn,i-k+1);if(Mod(c,p)==0,R=R+1; 
listinsert(L,p^valuation(c,p),1)));print("h=",h," ",L);print("E=",E)}

h=2   List([3125])
\end{verbatim}
\normalsize

Now, consider the annihilator ${\mathcal A}_{K,n}(c) =: A_K$;
since ${\mathcal T}_K \simeq {\mathcal T}_k$, we get 
${\mathcal T}_K^{A_K} \simeq  
{\mathcal T}_k^{{\rm N}_{K/k}(A_K)}$, where (see
Corollary \ref{coro}):
$${\rm N}_{K/k}({\mathcal A}_{K, n}(c)) \ \wt\equiv\ 
\Big(1-\Frac{1}{\ell}  \Big(\Frac{k}{\ell} \Big) \Big){\mathcal A}_{k,n}(c).$$ 

Then $\ell=37 \equiv 2 \pmod 5$ splits in $k$ and 
$1-\Frac{1}{\ell}  \Big(\Frac{k}{\ell} \Big) =
1-\Frac{1}{\ell}$ is invertible modulo $5$.

\smallskip
So $A_K$ acts on ${\mathcal T}_K$ as  
${\mathcal A}_{k,n}(c)$ on ${\mathcal T}_k$; we can use
the program for quadratic fields and $p>2$ (of course
the bounds $bf, Bf$ may be arbitrary):

\footnotesize
\begin{verbatim}
{p=5;nt=8;bf=45161;Bf=45161;for(f=bf,Bf,v=valuation(f,2);M=f/2^v;
if(core(M)!=M,next);if((v==1||v>3)||(v==0 & Mod(M,4)!=1)||
(v==2 & Mod(M,4)==1),next);P=x^2-f;K=bnfinit(P,1);Kpn=bnrinit(K,p^nt);
C5=component(Kpn,5);Hpn0=component(C5,1);Hpn=component(C5,2);
h=component(component(component(K,8),1),2);L=List;ex=0;
i=component(matsize(Hpn),2);for(k=1,i-1,co=component(Hpn,i-k+1);
if(Mod(co,p)==0,val=valuation(co,p);if(val>ex,ex=val);
listinsert(L,p^val,1)));Hpn1=component(Hpn,1);
vptor=valuation(Hpn0/Hpn1,p);tor=p^vptor;S0=0;S1=0;pN=p*p^ex;fn=pN*f;
for(cc=2,10^2,if(gcd(cc,p*f)!=1 || kronecker(f,cc)!=-1,next);c=cc;break);
for(a=1,fn/2,if(gcd(a,fn)!=1,next);asurc=lift(a*Mod(c,fn)^-1);
lambda=(asurc*c-a)/fn;u=Mod(lambda*a^-1,pN);
s=kronecker(f,a);if(s==1,S0=S0+u);if(s==-1,S1=S1+u));
L0=lift(S0);L1=lift(S1);A=L1-L0;if(A!=0,A=p^valuation(A,p));
print(f," P=",P," ",L0," ",L1," A=",A," tor=",tor," T_K=",L," Cl_K=",h))}
\end{verbatim}

\normalsize
giving the annihilator $A_k \equiv 10185 + 3935\, \ov\sigma \pmod{5^6}$ 
where $\ov\sigma$ generates ${\rm Gal}(k/\Q)$; then,
$A_k$ is equivalent, modulo the norm, to the integer 
$10185 - 3935 \equiv 2 \cdot 5^5 \pmod{5^6}$, which is perfect since 
${\mathcal T}_k \simeq \Z/5^5 \Z$. 

\smallskip
The class group of $k$ being trivial, the fundamental unit 
$\varepsilon$ is the cyclotomic one and is such that
$\varepsilon^4=1+ 5^6 \cdot \alpha$, $\alpha$ prime to $5$,
which confirms that:
\begin{equation}\label{psik}
\Psi_k \sim \Frac{1}{5} ( {\rm log}(\varepsilon) + 
{\rm log}(\varepsilon^{\ov\sigma}) \cdot \ov\sigma) = 
\Frac{1}{5}\, {\rm log}(\varepsilon) (1-\ov\sigma)
\end{equation}

equivalent (modulo the norm) to $\frac{2}{5} \,{\rm log}(\varepsilon)$ 
and $\Psi_k = A_k$ as expected.
Meanwhile, the Solomon annihilator $\Psi_K$ does not give $\Psi_k$
by restriction, but~$0$.

\section{About the annihilator ${\mathcal A}_{K}(c)$
and the primitive $L_p(1,\chi)$}\label{section9}

\subsection{Galois characters v.s. Dirichlet characters}
Let $f_K$ be the conductor of $K$.
In most formulas, the characters $\chi$ of $K$ must be primitive of conductor 
$f_\chi \mid f_K$, whence Dirichlet characters on $(\Z/f_\chi \Z)^\times$
such that $\chi\big[\big(\frac{\Q^{f_\chi}}{a}\big)\big]$ makes sense for
$a \in \Z$, prime to $f_\chi$, but not necessarily for
$\chi\big[\big(\frac{\Q^{f_K}}{a}\big)\big]$ if a prime $\ell$ divides 
both $a$ and $f_K$ but not $f_\chi$. This is an obstruction to 
consider them as Galois characters 
over $\Z_p[G_K]$ for instance, whence defined on $(\Z/f_K \Z)^\times$; 
so we shall introduce the corresponding 
Galois character of $G_K$, denoted $\psi_\chi =: \psi$.
A Galois character $\psi$ of $G_K$ is also a character of 
$G_n = {\rm Gal}(L_n/\Q)$ whose kernel fixes $K$,
so $\psi(a)$ ($a \in [1, f_n]$ prime to $f_n$) is the image by $\psi$
of the Artin symbol $\Big(\Frac{L_n}{a} \Big)$ whence of
$\Big( \Frac{K}{a} \Big)$. 

\smallskip
Any non-primitive writing $\psi({\mathcal A}_K)$, 
for ${\mathcal A}_K \in \Z_p[G_K]$, may introduce a product 
of Euler factors. Indeed, let $k_\chi$ be the subfield fixed 
by the kernel of $\psi=\psi_\chi$ (then $\chi$ is a primitive character of $k_\chi$ 
but not necessarily of $K$); then,
$\psi({\mathcal A}_K)= \psi ({\rm N}_{K/k_\chi}({\mathcal A}_K))
= \chi( {\mathcal E}_{k_\chi})\cdot \chi({\mathcal A}_{k_\chi})$
in which $\chi( {\mathcal E}_{k_\chi})$ may be non-invertible (or $0$).

\subsection{Expression of $\psi({\mathcal A}_{K}(c))$}
Let $\psi$ be any Galois character of $K$ considered as Galois 
character of ${\rm Gal}(L_n/\Q)$, for $n \geq n_0+e$.
We then have the following result about the computation of the
annihilator ${\mathcal A}_{K}(c)=: 
\sm_{\sigma \in G_K} \Lambda_\sigma(c) \cdot \sigma$ 
(given explicitely by the Theorem \ref{st*}), without any hypothesis
on $K$ and $p$:

\begin{lemma}\label{lambda}
The expression $\psi({\mathcal A}_{K}(c))$ is the product of 
the multiplicator $1 - \psi \big(\big(\frac{L_\infty}{c} \big) \big)$ by
the non-primitive value $L_p(1, \psi)$. In other words, one has:
\begin{equation*}
\begin{aligned}
\psi({\mathcal A}_{K}(c)) & =  (1-\psi (c)) \cdot L_p(1,\psi) \\
&= (1-\psi (c))  \cdot  \!\! \prd_{\ell \mid f_K,\, \ell \nmid p f_\chi}
 \!\!\big(1-\chi(\ell) \ell^{-1}\big) \,L_p(1,\chi) .
\end{aligned}
\end{equation*}
\end{lemma}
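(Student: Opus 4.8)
The plan is to connect the explicit group-algebra element $\mathcal{A}_K(c)$ with the $p$-adic measure interpretation built in Section~\ref{section9} and the preceding material, and then read off the $L_p$-value by the standard Mellin-transform/interpolation recipe. First I would recall that, by Corollary~\ref{coro}(iii), the family $(\mathcal{A}_{K,n})_n = ({\rm N}_{L_n/K}(\mathcal{A}_{L_n}))_n$ is a pseudo-measure and $\mathcal{A}_K(c) = \big(1-\big(\frac{K}{c}\big)\big)\cdot \mathcal{A}_K$ is a genuine measure on $G_K$; applying the Galois character $\psi$ (extended multiplicatively to $\Z_p[G_K]$, and to the tower via $\psi$ on ${\rm Gal}(L_n/\Q)$ as explained in \S\,9.1) gives $\psi(\mathcal{A}_K(c)) = (1-\psi(c))\cdot \psi(\mathcal{A}_K)$, so the whole problem reduces to identifying $\psi(\mathcal{A}_K)$ with the non-primitive value $L_p(1,\psi)$.

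Next I would compute $\psi(\mathcal{A}_{L_n})$ directly from the definition \eqref{defA}:
$$\psi(\mathcal{A}_{L_n}) = \frac{-1}{f_n\varphi_n}\sum_{a}a^{\varphi_n}\psi(a),$$
restricting first to $K$ (the operation of ${\rm Gal}(L_n/K)$ being trivial on the relevant objects). The key point is that $\tfrac{1}{\varphi_n}\big(a^{\varphi_n}-1\big)$ is, up to the factor $-\tfrac{1}{f_n}$ and a character twist, the $n$th-level Riemann-sum approximation of $\log_p$, and the sum $\frac{-1}{f_n}\sum_a \langle a\rangle^{-1}\psi(a)(\ldots)$ is exactly the finite approximation to the Mellin transform at $s=1$ that defines $L_p(s,\psi)$; this is precisely the construction of \cite[Section~II]{Gr6}, which I am entitled to invoke. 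Taking the $p$-adic limit over $n\geq n_0+e$ — legitimate because of the coherence ${\rm N}_{L_{n+h}/L_n}(\mathcal{A}_{L_{n+h}})\ \wt\equiv\ \mathcal{A}_{L_n}$ established after Theorem~\ref{thmfond} — yields $\psi(\mathcal{A}_K) = L_p(1,\psi)$, where the subscript ``non-primitive'' records that $\psi$ is viewed as a character modulo $f_K$ rather than $f_\chi$.

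Then I would unwind the non-primitivity. Writing $k_\chi$ for the subfield of $K$ fixed by $\ker\psi$, so that $\chi$ is the primitive Dirichlet character attached to $\psi$ of conductor $f_\chi = f_{k_\chi}$, I factor $\mathcal{A}_K = {\rm N}_{K/k_\chi}$-compatibly and apply Corollary~\ref{coro}(iii) (or rather its restricted-to-$K$ form, Corollary~\ref{coro}(ii)/(iii)) repeatedly, peeling off one prime $\ell \mid f_K$, $\ell \nmid p f_\chi$ at a time via Theorem~\ref{thmfond}. Each such prime contributes an Euler factor $1-\chi(\ell)\ell^{-1}$ (note $\ell^{\varphi_n}\tfrac1\ell \to \ell^{-1}$ $p$-adically under $\psi$, since $\chi(\ell)$ is the relevant unit), while primes dividing $p$ or $f_\chi$ contribute nothing — exactly the ``not primitive at $p$'' phenomenon flagged in the footnote to Theorem~\ref{thmfond}. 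This produces
$$\psi(\mathcal{A}_K) = \prod_{\ell\mid f_K,\ \ell\nmid p f_\chi}\big(1-\chi(\ell)\ell^{-1}\big)\,L_p(1,\chi),$$
and combining with the multiplicator gives the claimed formula.

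The main obstacle I anticipate is the first identification $\psi(\mathcal{A}_K) = L_p(1,\psi)$: one must match the specific normalization of $\mathcal{A}_{L_n}$ — in particular the $\tfrac{1}{\varphi_n}$ and the $a^{\varphi_n}$ versus $a^{-1}$ discrepancy, which is only a congruence mod $p^{n+1}$ as in the Lemma after \eqref{defA} — against whatever normalization of the $p$-adic $L$-function and its Mellin-transform construction is fixed in \cite{Gr6}, and check that the ``$\wt\equiv$'' slack (multiples of $\sum_\sigma\sigma$, which lie in $\ker\psi$ for $\psi\neq 1$, and the trivial-character case where $L_p$ has a pole handled by the pseudo-measure formalism) does not affect the limit. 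Everything else is the routine bookkeeping of Euler factors already packaged in Corollary~\ref{coro} and Theorem~\ref{thmfond}.
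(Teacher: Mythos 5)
Your proposal is correct and follows essentially the same route as the paper: the paper's own proof consists precisely in invoking the construction of $p$-adic $L$-functions from \cite{Gr6} (and \cite{F}) together with the computations of \S\,\ref{defAc} to identify $\lim_n \psi({\mathcal A}_{K,n}(c))$ with $(1-\psi(c))\,L_p(1,\psi)$, the Euler factors for $\ell \mid f_K$, $\ell \nmid p f_\chi$ then coming from the normic relations of Theorem \ref{thmfond}/Corollary \ref{coro}, i.e.\ the non-primitivity of $\psi$. The normalization matching and the trivial-character pole that you flag are exactly the points the paper delegates to those citations, so nothing essential is missing or different.
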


\begin{proof}
This comes from the classical construction of $p$-adic $L$-functions
\cite[Propositions II.2, II.3, D\'efinition II.3, II.4, and Remarques II.3, II.4]{Gr6},
then \cite[page 292]{F}. Thus we obtain, using the computations of the
\S\,\ref{defAc}, the link between the limit (for $n \to \infty$):
$$\psi({\mathcal A}_{K}(c)) = \sm_{\sigma \in G_K} 
\!\Lambda_ \sigma(c) \cdot \psi(\sigma)\ \hbox{ (cf. Remark \ref{rema4.10}\,(i)),} $$ 
of $\psi({\mathcal A}_{L_n}(c))= \psi({\mathcal A}_{K,n}(c)) =
\sm_{\sigma \in G_K} \Lambda^n_ \sigma(c) \, \psi(\sigma)$,
and the value at $s=1$ of the $L_p$-function of the {\it primitive character}
$\chi$ associated to $\psi$.
\end{proof}

\begin{remark}
Note that in the various calculations in \S\,\ref{defAc}, 
$\varphi_n=\varphi(qp^n)$ when $n \to \infty$ plays the role of $1-s$ when 
$s \to 1$ in the construction of $p$-adic 
$L_p$-functions by reference to Bernoulli numbers.
\end{remark}

For all primitive Dirichlet character $\chi \ne 1$ of $K$, of modulus $f_\chi$ 
(or $p f_\chi$ if $p\nmid f_\chi$), and for all $p\geq 2$, we have
the classical formulas of the value at $s=1$ of the $p$-adic 
$L$-functions (see for instance \cite[Theorem 5.18]{W}), where 
${\tau}(\chi) = \sum_{(a, f_\chi)=1} \chi(a) \zeta_{f_\chi}^a$ 
is the primitive Gauss sum of $\chi$:
\begin{equation*}
L_p(1,\chi) = -\Big(1-\Frac{\chi(p)}{p} \Big) \cdot \Frac{\tau(\chi)}{f_\chi}
\sm_{a\in[1, f_\chi],\,(a , f_\chi)=1} \chi^{-1}(a) {\rm log}(1 - \zeta_{f_\chi}^a), 
\end{equation*}

where the Euler factor $1- \chi(p) p^{-1}$ illustrates the fact that for 
$L_p$-functions, any character $\chi$ is considered modulo $p f_\chi$
when $p \nmid f_\chi$.

\smallskip
From the Coates formula \cite{C} and classical computations (see also some
details in \cite[\S\,2.2]{Gr4}) we recall that 
$\order {\mathcal T}_K \sim  [K \cap \Q_\infty : \Q] \cdot \prod_{\chi \ne 1}
\hbox{$\frac{1}{2}$}\,L_p (1,\chi)$ (up to a $p$-adic unit), thus
$\order {\mathcal T}_K \sim \prod_{\chi \ne 1} \hbox{$\frac{1}{2}$}\,L_p (1,\chi)$
if $K \cap \Q_\infty = \Q$ (i.e., $n_0=0$).
Moreover, we know that in the semi-simple case, one obtains the orders 
of the isotypic components of ${\mathcal T}_K$ by means of the 
$\frac{1}{2}\,L_p (1,\chi)$; but the whole Galois structure of ${\mathcal T}_K$
is more precise that the set of those given by the components ${\mathcal T}_K^{e_\theta}$,
where the $e_\theta$ are the corresponding $p$-adic idempotents.

\begin{remark} \label{solo}
Let $\chi$ be a primitive Dirichlet character of conductor $f_\chi \ne 1$.
We define the ``modified Solomon element'' of $\Z_p[G_{k_\chi}]$:
\begin{equation*}
 \Psi_{k_\chi} := -\Big(1- \Frac{\chi(p)}{p}\Big)
\cdot   \Frac{\tau(\chi)}{f_\chi} 
\sm_{\tau \in G_{k_\chi}} {\rm log}(\eta_{k_\chi}^\tau)\cdot \tau^{-1}.
\end{equation*}

Whence $L_p(1,\chi) =  \chi\big( \Psi_{k_\chi})$ ($\chi \ne 1$ primitive). Put:
$$C_\chi := -\Big(1- \Frac{\chi(p)}{p}\Big) \cdot \Frac{\tau(\chi)}{f_\chi}.$$
When $p\nmid {f_\chi}$, $\tau(\chi)$ is 
invertible and $C_\chi \cdot {\rm log}(\eta_{k_\chi}^\tau)
\sim \frac{1}{p} \cdot {\rm log}(\eta_{k_\chi}^\tau) \sim \Psi_{k_\chi}$
(the original Solomon element); when $p \mid f_\chi$, the factor 
$\frac{1}{p}$ in $C_\chi$ is replaced, ahead the logarithms, by the 
quotient $\Frac{1}{\overline {\tau(\chi)}}$ having the suitable 
$p$-valuations. For instance, if $d$ is prime and $p$ unramified,
$\frac{1}{p} \sm_{\sigma \in G_K} {\rm log}(\eta_{K}^ \sigma)\cdot \sigma ^{-1}$
annihilates ${\mathcal T}_K$.
\end{remark}

\subsection{The annihilator ${\mathcal A}_{K}(c)$ and the $ \Psi_{k_\chi}$}
The following statement does not assume any hypothesis on $K$ and $p$
and gives again the known results of annihilation (e.g., semi-simple case,
but also the point of view of \cite{O}):

\begin{theorem} \label{thmp}
Let $K$ be a real abelian number field, of degree $d$, of Galois group $G_K$
and of conductor $f_K$.
Let ${\mathcal A}_{K}(c) =\ds \lim_{n\to\infty}{\mathcal A}_{K,n}(c) \in \Z_p[G_K]$
annihilating ${\mathcal T}_K$ (cf. Theorem \ref{st*}).
Then we have (where each $\chi$ is the primitive 
Dirichlet character associated to the Galois character $\psi$ of $G_K$):\par

\medskip
\centerline{$\ds{\mathcal A}_{K}(c) =  \Frac{1}{d} \sum_{\sigma \in G_K} 
\!\!\Big[ \sm_{\psi \ne 1} \psi^{-1} (\sigma) (1 - \psi (c)) \ \cdot  \!\!\!\!
\prd_{\ell \mid f_K,\, \ell \nmid pf_\chi}\!\!\! \Big(1-\Frac{\chi(\ell)}{\ell}\Big) \cdot
\chi( \Psi_{k_\chi}) \Big] \cdot \sigma$,}\par
with $ \Psi_{k_\chi} = - \Big(1- \Frac{\chi(p)}{p}\Big)\, \Frac{\tau(\chi)}{f_\chi}
\sm_{\tau \in G_{k_\chi}}  {\rm log} \big({\rm N}_{\Q^{f_\chi}/ k_\chi} 
(1 - \zeta_{f_\chi})^\tau \big)\cdot \tau^{-1}$.

\smallskip
Thus, ${\mathcal T}_K$ is annihilated by the ideal 
${\mathfrak A}_{K}$ of $\Z_p[G_K]$ generated by the
${\mathcal A}_{K}(c)$, $c \in \Z$, prime to $2\,p f_K$.
\end{theorem}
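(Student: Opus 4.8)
The plan is to assemble the statement from three ingredients already established in the excerpt: (a) the fact from Theorem \ref{st*} that $\mathcal{A}_K(c) = \lim_n \mathcal{A}_{K,n}(c)$ lies in $\Z_p[G_K]$ and annihilates $\mathcal{T}_K$; (b) the formula from Lemma \ref{lambda} that $\psi(\mathcal{A}_K(c)) = (1-\psi(c))\cdot L_p(1,\psi) = (1-\psi(c))\cdot\prod_{\ell\mid f_K,\,\ell\nmid pf_\chi}(1-\chi(\ell)\ell^{-1})\cdot L_p(1,\chi)$ for every Galois character $\psi$ of $G_K$; and (c) the classical value formula for $L_p(1,\chi)$ recalled just before Remark \ref{solo}, which identifies $L_p(1,\chi)$ with $\chi(\Psi_{k_\chi})$ for the modified Solomon element $\Psi_{k_\chi}$ of Remark \ref{solo}. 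Combining (b) and (c) gives $\psi(\mathcal{A}_K(c)) = (1-\psi(c))\cdot\prod_{\ell\mid f_K,\,\ell\nmid pf_\chi}(1-\chi(\ell)\ell^{-1})\cdot\chi(\Psi_{k_\chi})$ for all $\psi$.

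First I would record that an element $x = \sum_{\sigma\in G_K} x_\sigma\,\sigma \in \Z_p[G_K]$ is determined by the family of its values $\psi(x)$ as $\psi$ ranges over all ($\C_p$-valued) characters of $G_K$, via the Fourier inversion formula $x_\sigma = \frac{1}{d}\sum_\psi \psi^{-1}(\sigma)\,\psi(x)$; this is the standard orthogonality of characters of the finite abelian group $G_K$, valid over any field of characteristic zero containing enough roots of unity, in particular over $\C_p$. Applying this to $x = \mathcal{A}_K(c)$ and inserting the value of $\psi(\mathcal{A}_K(c))$ obtained above yields
\[
\mathcal{A}_K(c) = \frac{1}{d}\sum_{\sigma\in G_K}\Big[\sum_\psi \psi^{-1}(\sigma)(1-\psi(c))\prod_{\ell\mid f_K,\,\ell\nmid pf_\chi}\Big(1-\frac{\chi(\ell)}{\ell}\Big)\chi(\Psi_{k_\chi})\Big]\cdot\sigma,
\]
and the trivial character $\psi = 1$ contributes $(1-1) = 0$, so it can be dropped from the inner sum; this gives exactly the displayed identity. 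One must be slightly careful that $\psi^{-1}(\sigma)(1-\psi(c))$ is interpreted through the Galois character $\psi$ (defined on all of $(\Z/f_K\Z)^\times$), while the Euler product and $\chi(\Psi_{k_\chi})$ are phrased through the associated primitive Dirichlet character $\chi$; the bridge between the two is precisely the discussion in Section \ref{section9} (Galois vs. Dirichlet characters), where $k_\chi$ is the fixed field of $\ker\psi$ and $\psi(\mathcal{A}_K) = \chi(\mathrm{N}_{K/k_\chi}(\mathcal{A}_K))$ absorbs the missing Euler factors.

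The final clause—that $\mathcal{T}_K$ is annihilated by the ideal $\mathfrak{A}_K$ generated by all $\mathcal{A}_K(c)$ with $c$ prime to $2pf_K$—is then immediate: each individual $\mathcal{A}_K(c)$ annihilates $\mathcal{T}_K$ by Theorem \ref{st*}(i) (for $p\neq 2$; and by Theorem \ref{st*}(ii) one uses that $c$ odd forces the extra factor of $2$ or $4$ to be handled, or one simply restricts to odd $c$ so that $\mathcal{A}'_K(c)$ and the $2$-variants already annihilate), hence so does every $\Z_p[G_K]$-linear combination of them, i.e. every element of the ideal they generate. The only genuine subtlety—and the step I expect to be the main obstacle to present cleanly rather than to prove—is bookkeeping: matching the ``non-primitive'' value $L_p(1,\psi)$ appearing in Lemma \ref{lambda} with the product of local Euler factors times the ``primitive'' $L_p(1,\chi) = \chi(\Psi_{k_\chi})$, and making sure the Gauss-sum normalization $C_\chi = -(1-\chi(p)/p)\,\tau(\chi)/f_\chi$ in $\Psi_{k_\chi}$ is the one forcing $\chi(\Psi_{k_\chi}) = L_p(1,\chi)$ exactly (not merely up to a $p$-adic unit), which is what the references \cite{Gr6}, \cite{F}, \cite{W}, \cite{C} are invoked for. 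Once those identifications are in place, the theorem is a formal consequence of Fourier inversion on $G_K$ and the annihilation statement of Theorem \ref{st*}.
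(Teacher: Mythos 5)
Your proposal is correct and follows essentially the same route as the paper: evaluate $\psi({\mathcal A}_K(c))$ via Lemma \ref{lambda}, identify $L_p(1,\chi)$ with $\chi(\Psi_{k_\chi})$ as in Remark \ref{solo}, and recover the coefficients $\Lambda_\sigma(c)$ by inverting the character matrix (Fourier inversion on $G_K$), the unit character contributing $0$. The final clause on the ideal ${\mathfrak A}_K$ is handled the same way, so no further comment is needed.
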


\begin{proof} For all Galois character $\psi$ of $G_K$,
Lemma \ref{lambda} leads to the identity:
\begin{equation*}
\begin{aligned}
\psi ({\mathcal A}_{K}(c)) & = \sm_{\sigma \in G_K} \Lambda_\sigma(c)
\cdot \psi (\sigma) \\
& = (1 - \psi (c)) \cdot \prd_{\ell \mid f_K,\, \ell \nmid pf_\chi}\!\!\! \big(1-\chi(\ell) \ell^{-1}\big)
\cdot L_p(1,\chi)  \\
& =  (1 - \psi (c)) \cdot  \prd_{\ell \mid f_K,\, \ell \nmid pf_\chi}\!\!\! \big(1-\chi(\ell) \ell^{-1}\big)
\cdot \chi( \Psi_{k_\chi})
\end{aligned}
\end{equation*}
with $\psi_1 ({\mathcal A}_{K}(c)) = 0$ for the unit character $\psi_1$.

\smallskip
Since the matrix $\big(\psi (\sigma)\big)_{\psi, \sigma}$ is invertible
with inverse $\Frac{1}{d}\Big(\psi^{-1} (\sigma)\Big)_{\sigma, \psi}$, this yields
$\Lambda_\sigma(c) = \Frac{1}{d} \sm_ \psi \psi^{-1} (\sigma) 
\psi({\mathcal A}_{K}(c))=
\Frac{1}{d} \sm_ \psi \psi^{-1} (\sigma)(1 - \psi (c)) \cdot L_p(1, \psi)$. 
Whence the result using the expression of $L_p(1, \psi)$ in Lemma \ref{lambda}.
\end{proof}

\subsection{A cyclic quartic field $K$ of conductor $37 \cdot 45161$}

We recall from \S\,\ref{quartic} that $m= 45161$ is totally ramified in $K$, that
$\ell = 37$ splits in the quadratic subfield $k=\Q(\sqrt m)$ and is ramified in $K/k$;
then $p=5$ totally splits in $K$. We have ${\mathcal T}_k \simeq \Z/5^5\Z$.

\smallskip
Denote the four characters by $\psi_1$, $\psi_2$, $\psi_4\  \&\  \psi_4^{-1}$
(orders $1$,$2$, $4$, respectively) and let 
$G_K = \{1, \sigma^2, \sigma, \sigma^{-1}\}$ with $\sigma$ of order $4$.
We shall put $\psi_4(\sigma)=i$, and so on by conjugation and the relation
$\psi_2 = \psi_4^2$. 

\smallskip
Then, using the modified Solomon elements $\Psi_k$, $\Psi_K$
(expressions \eqref{psiK}, \eqref{psik}):
$$ \Psi_k = 5^5  \!\cdot\! u\ \ \  \& \ \ \ 
\Psi_K = \Frac{v}{5} \big({\rm log}(A) + 
{\rm log}(B)\, \sigma \big) (1 - \sigma^2),$$

where $u$ and $v$ are $p$-adic units, $A \ \&\  B=A^\sigma$ are 
the two independent units of $K$ of relative norm 1. 

\smallskip
We have to compute the coefficients $\psi^{-1} (\sigma) (1 - \psi (c))$, 
which gives the array:
\footnotesize
\begin{align*}
&\hspace{0.55cm}& \psi_1\hspace{1.35cm}&\ \ \ \ \psi_2&
     \psi_4 \hspace{1.9cm}&\ \ \ \ \ \psi_4^{-1} \\
&1  & 0 \hspace{1.5cm} &\ \ \ 1\cdot 2   &1\cdot (1-i) \hspace{1.5cm}  & \ \ \  1\cdot(1+i)   \\
&\sigma^2  & 0 \hspace{1.5cm} &\ \ \ 1\cdot 2  & -1 \cdot(1-i) \hspace{1.4cm} & -1 \cdot(1+i)  \\
&\sigma     &  0 \hspace{1.5cm}  &  -1\cdot 2  &  -i \cdot (1-i)\hspace{1.5cm} & \ \ \ \ i \cdot(1+i)  \\
&\sigma^{-1} & 0 \hspace{1.5cm} &  -1\cdot 2   &  i \cdot (1-i) \hspace{1.5cm} &\ - i \cdot(1+i)
\end{align*}

\smallskip\normalsize
Then the terms $\prd_{\ell \mid f_K,\, \ell \nmid pf_\chi}\!\!\!\big(1-\chi(\ell) \ell^{-1}\big) \cdot
\chi( \Psi_{k_\chi})$ have the following values, depending on the character
$\psi$  in the summation of the theorem: 

\smallskip
$\bullet\  \ 5^5 \!\cdot\! u$ for $\psi_2$, since
$1 - \chi_2(\ell) \ell^{-1} = 1 - 37^{-1} \sim 1$,

\smallskip
$\bullet\ \  \Frac{2 v}{5} \big( {\rm log}(A) +i \,{\rm log}(B) \big)\ \ \&\ \ 
 \Frac{2 v}{5} \big( {\rm log}(A) - i \,{\rm log}(B) \big)$,
for $\psi_4 \ \ \&\ \  \psi_4^{-1}$.

\medskip
We obtain, up to a $p$-adic unit, using the coefficients of the above array:
\begin{equation*}
\begin{aligned}
&{\mathcal A}_{K}(c) = \\
& \Big[ \Frac{v}{5}\big[ {\rm log}(A) + {\rm log}(B) \big] + 5^5 \!\cdot\! u \Big]
 + \Big[ \Frac{v}{5} \big[- {\rm log}(A) - {\rm log}(B) \big] + 5^5  \!\cdot\! u \Big] \cdot\sigma^2 + \\
& \Big[ \Frac{v}{5} \big[- {\rm log}(A) + {\rm log}(B) \big] - 5^5  \!\cdot\! u\Big] \!\cdot \! \sigma 
+ \Big[ \Frac{v}{5} \big[{\rm log}(A) - {\rm log}(B) \big] - 5^5  \!\cdot\! u\Big] \cdot \sigma^{-1}  \\
& = 5^5 u\! \cdot\! (1 - \sigma) (1 + \sigma^2)  \\
&  \hspace{2.2cm} + v \Big [\Frac{1}{5} \big[ {\rm log}(A) + {\rm log}(B)\big]
- \Frac{1}{5} \big[ {\rm log}(A) - {\rm log}(B) \big] \cdot \sigma\Big] \cdot (1-\sigma^ 2).
\end{aligned}
\end{equation*}

We give $A$, one of the two units of relative norm $1$ (the other
being $B=A^\sigma$):

\smallskip\footnotesize
\begin{verbatim}
377216797578975495402206020260112295002483855252847326395960961891321756
935656033880097414072613343385538964199960251752277854265043908282068622
071287/424760*x^3 - 
863005972214749996449837366815586234260744443520807110375190268414267539
937539821074892103868728835668111842347981799323725052575447796376125480
7708541/7585*x^2 - 
301058401703043815651487372068244675606729686675124486738439428208587682
003249385550605088262234049232685807258542997079887400411162925713036023
300228411/11480*x + 
137753779960320144069066397981124894126287808388246384703621136571725449
454295610577594731673630502306081901547245942649393930683936045056394190
29007385081/410
\end{verbatim}

\normalsize
So it is easy to compute $A^4-1$, congruent modulo $5^8$ to:
$$5 \cdot \alpha = 317056\,x^3+260605\,x^2+260934\,x+182595, $$
whence ${\rm log}(A) \sim 5 \cdot \alpha$.
The decompositions into prime ideals of $5$ (which is totally split in $K/\Q$)
and of $5 \cdot \alpha$ give respectively for the $5$-places:

\smallskip\footnotesize
\begin{verbatim}
[[5,[-3,-2,2,2]~,1,1,[3,4,1,1]~]1]    [[5,[-3,0,2,-2]~,1,1,[2,0,4,1]~]1]
[[5,[-1,-2,-2,-2]~,1,1,[1,1,1,1]~]1]  [[5,[0,-1,-2,2]~,1,1,[2,2,4,1]~]1]

[[5,[-3,-2,2,2]~,1,1,[3,4,1,1]~]2]    [[5,[-3,0,2,-2]~,1,1,[2,0,4,1]~]1]
[[5,[-1,-2,-2,-2]~,1,1,[1,1,1,1]~]2]  [[5,[0,-1,-2,2]~,1,1,[2,2,4,1]~]1]
\end{verbatim}

\normalsize
Dividing by $5$, we find that
$\Frac{1}{5}{\rm log}(A) \sim \pi_1\cdot \pi_2$ then 
$\Frac{1}{5}{\rm log}(A^\sigma) \sim (\pi_1\cdot \pi_2)^\sigma =:
\pi_3\cdot \pi_4$, where the $\pi_i$ are integers with valuation $1$ at 
the four prime ideals dividing $5$; thus the coefficient:
\begin{equation*}
\begin{aligned}
U-V\,\sigma & = \Frac{1}{5}{\rm log}(A\,B) - \Frac{1}{5}{\rm log}(A\,B^{-1}) \\
& \sim u\,\pi_1\cdot \pi_2 + u'\, \pi_3\cdot \pi_4 
-(u\,\pi_1\cdot \pi_2 - u'\, \pi_3\cdot \pi_4) \cdot \sigma,
\end{aligned}
\end{equation*}
of $1-\sigma^ 2$ in ${\mathcal A}_{K}(c)$ is such that:
$$U^2+V^2 \equiv 2\,(u^2\,\pi_1^2\cdot \pi_2^2 + 
u'{}^2\,\pi_3^2\cdot \pi_4^2) \pmod 5$$ 
is $5$-adically invertible. So 
${\mathcal A}_{K}(c) = 5^5 u (1-\sigma) (1+\sigma^2) + w (1-\sigma^2 )$,
$u, w$ invertible.
This gives the optimal annihilation of 
both ${\mathcal T}_{k}$ (since ${\mathcal T}_K =j_{K/k}({\mathcal T}_k)$),
and the relative factor ${\mathcal T}_{K}^{\,*} = 1$, as kernel of the 
relative norm $1+\sigma^2$ in $K/k$, since the operation is given by 
$U-V\,\sigma$  which is invertible.

\subsection{A cyclic quartic field $K$ of conductor $2^2 \cdot 16212 \cdot 677$}
Let $K = \Q(x)$ where $x=\sqrt{677\,\Frac{1621+39 \sqrt{1621}}{2}}$.
This field is also defined by $P=x^4 - 1097417x^2 + 18573782725$.
The conjugates of $x$ are given by:

\smallskip\footnotesize
\begin{verbatim}
 nfgaloisconj(P)=[-x, x, -1/132015*x^3+1571/195*x, 1/132015*x^3-1571/195*x]
\end{verbatim}
\normalsize

\smallskip
We still consider the case $p=5$.
The prime $\ell=677$ splits in the quadratic subfield 
$k=\Q(\sqrt{1621})$, the ramified prime $2$ does not split in $k$; 
the class number of $k$ is 1 and that of $K$ is 4, so
we obtain a trivial $5$-class group and the following group structures
giving, here, a non-trivial relative ${\mathcal T}_{K}^{\,*}$:
$${\mathcal T}_k \simeq \Z/ 5^2 \Z, \ \ {\mathcal T}_K 
\simeq \Z/ 5^2 \Z \times \Z/ 5^3 \Z. $$

In $k$, the cyclotomic unit is the fundamental unit and is given by:
$$\varepsilon = \Frac{119806883557}{26403} \,x^2 - \Frac{3042847629386}{39}; $$

we compute that $\frac{1}{5} \cdot{\rm log}(\varepsilon) \sim 5^2 \sim \Psi_k$ 
as expected since ${\mathcal T}_k={\mathcal R}_k$.

\smallskip
The cyclotomic units $A$ and $B=A^\sigma$ of $K$, 
of relative norm $1$, are too large to be given here, but we can work 
with some representatives modulo a large power of $5$. 
As in the previous example, we have to compute (up to $5$-adic units 
since the Euler factors for $2$ and $677$ are invertible):
\begin{equation}\label{677}
\Big [  \Frac{1}{5} \big [ {\rm log}(A) + {\rm log}(B) \big] 
-  \Frac{1}{5}  \big[ {\rm log}(A) - {\rm log}(B) \big] 
\cdot \sigma\Big] \cdot (1-\sigma^ 2).
\end{equation}

We see that ${\rm log}(A)$ is of the form $5 \cdot \alpha$, where 
$\alpha$ is a $5$-adic unit, and that
$\Frac{1}{5} \big[ {\rm log}(A) - {\rm log}(B) \big]$
and $\Frac{1}{5} \big [{\rm log}(A) + {\rm log}(B) \big]$
are $5$-adically invertible, so we consider for instance:
$$C := \Frac{ {\rm log}(A) + {\rm log}(B)}
{{\rm log}(A) - {\rm log}(B)} \equiv
13\cdot 5^2 x^3+5^3 x^2+19 \cdot 5^2 x+57 \pmod{5^4}$$ 

and we verify that, despite the denominators $5$,
$\Frac{3}{5} \cdot x^3 - \Frac{1}{5}  \cdot x$
is an integer of $K$ (congruent to $x^\sigma$ modulo $5$
as given by $nfgaloisconj(P)$) so that:
$$C \equiv 5^3\cdot 3 \cdot \Big(\Frac{3}{5} x^3 - \Frac{1}{5}  x + x^2\Big)+57
\pmod{5^4} .$$

Since the exponent of ${\mathcal T}_K$ is $5^3$,
we obtain that the coefficient $U - V \cdot \sigma$ (in \eqref{677}) is
equal to $(57 - \sigma) \cdot (1-\sigma^ 2)$; thus the whole annihilator is:
$${\mathcal A}_K(c) \equiv 5^2 \cdot u \cdot (1 - \sigma) (1 + \sigma^2) + 
v \cdot (57 - \sigma) \cdot (1-\sigma^ 2) \pmod{5^4}.$$

So, on the factor ${\mathcal T}_k$ the annihilator ${\mathcal A}_K(c)$
acts as the order $5^2$ of ${\mathcal T}_k$, and on the relative
submodule ${\mathcal T}_K^*$, it acts as $57 - \sigma$,
which is very satisfactory since $57$ is of order $4$ modulo $5^3$
(note that $57^2+1 = 5^3 \cdot 26$).

\medskip
These examples show that ${\mathcal A}_K(c)$ takes into account the
whole structure of ${\mathcal T}_K$; but when the Euler factor is not
a $p$-adic unit because of a prime $\ell \equiv 1 \pmod p$ which splits 
in $k$ and is ramified in $K/k$, the annihilation is probably not optimal. 

\smallskip
It should be usefull to know if the annihilators, given more recently in
the literature, have best properties or not in this point of view, which
is not easy since numerical tests are absent (to our knowledge).

\subsection{Ideal of annihilation for arbitrary real abelian number fields}
We do not make any assumption on $p$ and $G_K$, nor on the decomposition 
of the primes $\ell \mid f_K$ in the real abelian extension $K/\Q$.

\smallskip
 If $K/\Q$ is cyclic, one can choose $c$ (prime to $2 p f_K$) such that
for all $\psi \ne 1$, $1 - \psi (c)$ is non-zero with minimal $p$-adic valuation;
this valuation is $0$ as soon as $d$ is not divisible by $p$, taking 
$\big(\frac{K}{c}\big)$ as a generator of $G_K$. 
Since in the non-cyclic case, this is impossible, 
we can consider the augmentation ideal 
${\mathcal I}_K = \big\langle 1- \big(\frac{K}{c} \big), \ 
\hbox{$c$ prime to $2 p f_K$} \big \rangle_{\Z[G_K]}$ of $G_K$
and the ideal:
$${\mathcal I}_K \cdot {\mathcal A}_K$$

which annihilates ${\mathcal T}_K$. It is clear, from 
Corollary \ref{coro}, that the pseudo-measure ${\mathcal A}_K$ 
does not depend on ${\mathcal I}_K$ and that any choice of 
$\delta_K \in {\mathcal I}_K$ is such that
$\delta_K \,{\mathcal A}_K \in \Z_p[G_K]$. 

In a $p$-group $G_K$ of $p$-rank $r$,
$\delta_K = \sum_{i=1}^r \lambda_i \cdot(1- \sigma_i)$, where the
generators $\sigma_i$ are suitable Artin symbols of integers $c_i$ 
prime to $2 p f_K$; then the characters $\psi$ may be written
$\psi = \prod_{i=1}^r \psi_i$, with obvious definition of the~$\psi_i$,
so that $\psi(\delta_K)= \sum_{i=1}^r \psi(\lambda_i) \cdot(1- \psi_i(\sigma_i))
= \sum_{i=1}^r \psi (\lambda_i) \cdot(1- \xi_i)$, where the $\xi_i$ are
roots of unity of $p$-power order. So we can minimize the $p$-adic 
valuations of the $\psi(\delta_K)$ to obtain the best annihilator.

\smallskip
For instance, if $K$ is the compositum of two cyclic cubic fields and $p=3$,
whatever the choice of $\delta_K = \lambda_1\,(1-\sigma_1) + 
\lambda_2\,(1-\sigma_2)$, $\lambda_1, \lambda_2$ prime to $3$,
where $\sigma_1$, $\sigma_2$ are two generators of $G_K$, then 
$\psi(\delta_K) \sim 1-j$ for 6 characters and 
$\psi(\delta_K) \sim 3$ for 2 other characters $\psi \ne 1$.
So the result depends on the structures of the ${\mathcal T}_k$
of the 4 cubic subfields $k$ of $K$.

\begin{remarks} (i) Let $k$ be a subfield of $K$ and let $j_{K/k}$ 
be the ``transfer map'' ${\mathcal T}_k \to {\mathcal T}_K$. Then, for
$\delta_K {\mathcal A}_K$, we get:
$$(j_{K/k}({\mathcal T}_k))^ {\delta_K {\mathcal A}_K} = 
j_{K/k}({\mathcal T}_k^{{\rm N}_{K/k} (\delta_K {\mathcal A}_K)}) \simeq 
{\mathcal T}_k^{{\rm N}_{K/k} (\delta_K {\mathcal A}_K)} =
{\mathcal T}_k^{{\mathcal E}_k \cdot \delta_k {\mathcal A}_k}; $$
indeed, this comes from the injectivity 
of the transfer since the Leopoldt conjecture is true in abelian extensions 
(see e.g., \cite[Theorem IV.2.1]{Gr1}); then if the product of Euler factors 
${\mathcal E}_k := \prod_{\ell \mid f_K,\, \ell \nmid p f_k} 
\Big(1-\Frac{1}{\ell} \Big(\Frac{k}{\ell} \Big) \Big)$
is invertible (i.e., $\chi({\mathcal E}_k)$ prime to $p$ 
for all $\chi$), this means that there is no loss of information by using 
the annihilation of ${\mathcal T}_K$ by the $\delta_K {\mathcal A}_K$, instead of
that of ${\mathcal T}_k$ by the $\delta_k {\mathcal A}_k$; otherwise, it is not possible
to eliminate the Euler factors ``hidden'' in $\delta_K {\mathcal A}_K$ when they
are non-invertible (although they are never zero) unless to restrict ourselves 
to the use of the $\delta_k {\mathcal A}_k$ for ${\mathcal T}_k$, at the cost of a 
weaker information on the global Galois structure of ${\mathcal T}_K$.

\medskip
(ii) The $G_K$-module ${\mathcal T}_K$ gives rise to the
following submodules or quotients-modules which have interesting
arithmetical meaning and are of course annihilated by the 
$\delta_K {\mathcal A}_K$:\,%
\footnote{For some ${\mathcal C}_K := {\rm Gal}(H_K^*/K)$, 
$H_K^* \subseteq H_K^{\rm pr}$, we put
${\mathcal C}_K^\infty:= {\rm Gal}(K_\infty H_K^*/K_\infty)$.}

\smallskip
\quad $\bullet$ The submodule $\Cl_K^\infty := {\rm Gal}(K_\infty H_K/K_\infty)$ 
isomorphic to a sub-module of $\Cl_K$. Note that if $p$ is unramified in 
$K/\Q$ and if (for $p=2$) $-1$ is not a local norm at $2$, 
then $\Cl_K^\infty \simeq \Cl_K$ (cf. \eqref{clinfty}), which
explains that, in general, one says that the $p$-class group
is annihilated by the annihilators of ${\mathcal T}_K$.

\smallskip
\quad $\bullet$ The module ${\mathcal W}_K$ and the normalized $p$-adic regulator
${\mathcal R}_K$ defining the exact sequence \eqref{exseq}.

\smallskip
\quad $\bullet$ The Bertrandias--Payan module ${\mathcal B\mathcal P}_K := 
{\mathcal T}_K/{\mathcal W}_K$ for which the fixed field $H_K^{\rm bp}$
by ${\mathcal W}_K$ in $H_K^{\rm pr}/K_\infty$ is the compositum of the $p$-cyclic 
extensions of $K$ which are embeddable in $p$-cyclic extensions of arbitrary 
large degree.

\smallskip
Then some ``logarithmic objects'' defined and studied by Jaulent 
(see \cite{J1}, \cite[\S\,2.3, Sch\'ema]{J2} and \cite{B-J}),
in a theoretical and computational point of view:

\smallskip
\quad $\bullet$ The logarithmic class group $\wt \Cl_K := 
{\rm Gal}(H_K^{\rm lc}/K_\infty)$ ($H_K^{\rm lc}$ is the maximal 
abelian locally cyclotomic pro-$p$-extension of $K$), defining the
exact sequence
$1\to \wt \Cl_K^{[p]} \to \wt \Cl_K \to \Cl^{S \infty}_K  \to 1$
($\Cl^S_K := \Cl_K/\cl_K(S)$ is the $p$-group of $S$-classes of $K$
and $\wt \Cl_K^{[p]}$ the subgroup generated by $S$).

\smallskip
\quad $\bullet$ The ``logarithmic regulator'' $\wt {\mathcal R}_K$ as
quotient of the group of ``semi-local logarithmic units'' by the 
``global logarithmic units''.
\end{remarks}

\section{Conclusion}
This elementary study, especially with the help of numerical computations, 
shows that the broad generalizations of $\Z_p[G_K]$-annihilations, that come 
from values of partial $\zeta$-functions, with various base fields 
(see, e.g., \cite{Ng2, Ni, N-N, Sn} among many others), may be difficult
to analyse, owing to the fact that the results are not so efficients (especially 
in the non semi-simple and/or the non-cyclic cases), and that some 
degeneracies may occur because of Euler factors as soon as the $p$-adic 
pseudo-measures that are used are of ``Stickelberger's type`` like
Solomon's elements or cyclotomic units.

\smallskip
Moreover, Iwasawa's techniques give more elegant formalism 
but do not avoid the question of Euler factors.

\smallskip
Depending on whether one deals with imaginary or real fields, 
the suitable object to be annihilated is not defined in an unique way as 
shown by the context of the present paper about the $G_K$-module 
${\mathcal T}_K$.
Moreover, roughly speaking, some objects are relative to the values $L_p(0, \chi)$
(order of some component of the $p$-class group of some non-real ``mirror field''), 
while some other are relative to the values $L_p(1, \chi)$ 
(groups ${\mathcal T}_K$), and it is well known that the points
``$s=0$'' and ``$s=1$'' are mysteriousely independent, giving sometimes
abundant ``Siegel zeros'' near 1, as explained by Washington in many papers
(see \cite{Gr4} and its bibliography), whence an unpredictible order of magnitude
of the annihilators.

\end{document}